\documentclass{article}

\usepackage{CJK}
\usepackage{CJK,CJKnumb,CJKulem,times,dsfont,ifthen,mathrsfs,latexsym,amsfonts,color}
\usepackage{amsmath,amsthm,makeidx,fontenc,amssymb,bm,graphicx,psfrag,listings,curves,extarrows}

\usepackage[utf8]{inputenc}
\usepackage[english]{babel}
\usepackage{multicol,graphicx,color}
\usepackage{pslatex}
\usepackage{authblk}
\usepackage{amsthm}
\usepackage{cite}
\usepackage{amsmath}
\usepackage{amssymb}
\usepackage{latexsym}
\usepackage{lscape}
\usepackage{epsfig}
\usepackage{amsfonts}
\usepackage{mathrsfs,bbm}
\usepackage[
hmarginratio={1:1},     
vmarginratio={1:1},     
textwidth=15cm,        
textheight=21cm,
heightrounded,          
]{geometry}

\usepackage{tikz}

\usepackage[
linktocpage=true,colorlinks,citecolor=magenta,linkcolor=blue,urlcolor=magenta]{hyperref}
\makeatletter

\usepackage[showonlyrefs]{mathtools}  
\mathtoolsset{showonlyrefs=true}

\newtheorem{theorem}{Theorem}[section]

\newtheorem{lemma}[theorem]{Lemma}
\newtheorem{proposition}[theorem]{Proposition}
\newtheorem{remark}[theorem]{Remark}

\theoremstyle{definition} \theoremstyle{remark}
\numberwithin{equation}{section}

\newcommand{\R}{\mathbb{R}}
\newcommand{\la}{\lambda}
\newcommand{\ep}{\epsilon}
\newcommand{\G}{\mathcal{G}}
\newcommand{\p}{\mathcal{P}}
\newcommand{\cR}{\mathcal{R}}

\begin{document}
\title{{\bf Uniqueness of positive solutions of double-power nonlinear stationary Schr\"odinger equations:} 	
\\the fixed frequency case and the fixed mass case
}

	\author{{\bf Linjie Song}\footnote{songlinjie18@mails.ucas.edu.cn} }

	\author{{\bf Wenming Zou}\footnote{zou-wm@mail.tsinghua.edu.cn} }
	
	\affil{{\small Department of Mathematical Sciences, Tsinghua University, Beijing 100084, China}}
	

	\date{}	
	\maketitle
	
	\begin{abstract}
\noindent The existence of a positive radial solution to the problem
		\begin{align*}
			-\Delta u + \la u = u^p + u^q, \quad u \in H^1(\R^N), ~~ N \ge 2,
		\end{align*}
		where  $\la > 0$ and $1 < q < p < 2^* - 1$,  has been known for a long time. For the pure-power nonlinearity, it is well known that this solution is unique. In contrast, the uniqueness problem for the double-power case is substantially more delicate and depends on the dimension and the exponents. It has been known that the uniqueness result is in general not true for the equation with double-power nonlinearities in dimension three and it is a long-standing open question to prove the uniqueness of positive radial solutions throughout the full subcritical range for $N \ge 4$ since the work of H. Berestycki, P.-L. Lions \cite{BL} (Arch. Ration. Mech. Anal. 1983). In this paper we provide the uniqueness results for all $\la > 0$ and all $1 < q < p < 2^* - 1$ when $N \ge 5$. In dimensions $N = 2,3,4$, with additional conditions on $p$ and $q$, we obtain the uniqueness results for all $\la > 0$. Our results are in sharp contrast to those of  J. D\'avila, M. del Pino and I. Guerra \cite{DPG}  (Proc. London Math. Soc.  2013), where  some non-uniqueness results were obtained  for $N=3$. At the same time, we give a positive answer and rigorous  proof to the implication of their numerical simulation,   clarify  the open issues left by \cite{DPG}. Finally, we completely resolve the uniqueness problem of the above equation with $L^2$-mass constraints in the mass-subcritical and mass-critical regimes for all $N \ge 2$.

		\vskip0.1in
		{\small \noindent \text{\bf Keywords:} Uniqueness; Positive solutions; Morse index; Nondegeneracy; Double-power nonlinearity; Nonlinear Schr\"odinger equations
		\vskip0.1in
		\noindent\text{\bf Mathematics Subject Classification:} 35A02, 35J61, 35B09
		\vskip0.1in
		\noindent \textbf{Statements and Declarations:} The authors have no relevant financial or non-financial interests to disclose.
		\vskip0.1in
		\noindent \textbf{Data availability:} Data sharing is not applicable to this article as no datasets were generated or analysed during the current study.
		\vskip0.1in
		\small \noindent \textbf{Acknowledgements:} This work is funded by National Key R\&D Program of China (Grant 2023YFA1010001), NSFC(12571123) and NSFC(12671139). It was completed when Song was visiting the Universit\'e Marie et Louis Pasteur, LmB (UMR 6623). Song is supported by the China Postdoctoral Science Foundation (2024T170452).
		}
		
	\end{abstract}
	
	\newpage
	\tableofcontents

	\bigskip
	
	\section{Introduction and main results} \label{secintroduction}
	In this paper we consider the following  nonlinear stationary Schr\"odinger equation with two
 power-type nonlinearities:
\begin{align} \label{dpequations}
		-\Delta u + \la u = |u|^{p-1}u + |u|^{q-1}u \quad \text{in } \R^N,
	\end{align}
	where $N \ge 2$, $\la > 0$ and the powers $p$ and $q$ are superlinear and subcritical, namely
	\begin{align*}
		1 < q < p < 2^* - 1; \quad\quad 2^* = \frac{2N}{N-2} ~ \text{ if } N \ge 3, \quad 2^* = \infty ~ \text{ if } N = 2.
	\end{align*}
Solutions of \eqref{dpequations} correspond to standing wave solutions $\psi(t,x) = e^{-i\la t}u(x)$ to the following time-dependent Schr\"odinger equation:
\begin{align}\label{doubletimedependeq}
i\psi_t =\Delta \psi  +  |\psi|^{p-1}\psi + |\psi|^{q-1}\psi    \quad \text{ in } \R \times \R^N.
\end{align}
This equation is a natural non-scaling invariant extension of the extensively studied equation with a pure-power nonlinearity:
\begin{align}\label{zwm=1}
i\psi_t = \Delta \psi  +  |\psi|^{p-1}\psi    \quad \text{ in } \R \times \R^N.
\end{align}
The  theories on the basic problems  such as the  well-posedness, asymptotic behaviour and blow-up phenomenon
about \eqref{doubletimedependeq} and \eqref{zwm=1} were  considered  by  T. Tao, M. Visan and X. Zhang \cite{tao} (see also  J. D\'avila, M. del Pino and I. Guerra \cite{DPG}) and the references therein. However, the uniqueness of positive solutions has always been a concern of physicists and mathematicians, which is very challenging.

\vskip0.12in

We are interested in the uniqueness of positive decaying solutions  to the equation \eqref{dpequations}, namely the positive solutions of the problem:
	\begin{align} \label{doublepower}
		\left\{
			\begin{aligned}
				& -\Delta u + \la u = u^{p} + u^{q} \quad \text{in } \R^N, \\
				& u > 0, \quad u(x) \to 0 ~~\text{ as }~ |x| \to \infty.
			\end{aligned}
		\right.
	\end{align}
The existence of the  solution of \eqref{doublepower} can be established by using the mountain-pass lemma or by searching for a minimizer on the Nehari manifold, see for example \cite{Wi}. Compared to the existence problem, the issue of the uniqueness is much more challenging, and people know less about it. In fact, this question  is a major open problem proposed in \cite[Section 10]{BL}, in which Berestycki and Lions asked for which nonlinearities $f$ the following semilinear elliptic equation admits a unique positive radial solution:
\begin{align*}
-\Delta u = f(u) \quad \text{ in } \R^N, \quad\quad u \in H^1(\R^N)
\end{align*}
with $N \ge 3$.

\vskip0.12in

In the case of a pure-power nonlinearity, namely the problem:
	\begin{align} \label{purepower}
		\left\{
			\begin{aligned}
				& -\Delta u + \la u = u^{p} \quad \text{in } \R^N, \\
				& u > 0, \quad u(x) \to 0 ~~\text{ as }~ |x| \to \infty,
			\end{aligned}
		\right.
	\end{align}
	solutions of \eqref{purepower} (and also those of \eqref{doublepower}) are necessarily radially symmetric up to translations owing to the classical Gidas, Ni and Nirenberg result \cite{GNN}, and later in \cite{Kwong}, Kwong established uniqueness of the radially symmetric solution of \eqref{purepower} with $\la = 1$ for all $p \in (1,2^*-1)$. Since \eqref{purepower} is scaling invariant, Kwong's uniqueness result on \eqref{purepower} still holds when $0 < \la \neq 1$.

\vskip0.12in

	After Kwong's result, there are some extensions for more general nonlinearities, see, for instance, \cite{CL,KZ,M,ST}. Except for \cite{CL}, nonlinearities $f(u) = u^p + u^q - \la u$ are not covered in other references unless $p = q$. In \cite{CL}, the uniqueness result of \eqref{doublepower} holds only when $1 < q < p \le N/(N-2)$ and $N \ge 3$. Note that $N/(N-2) < 2^*-1$. This exponent appears due to the technical limitations and it does not mean that \eqref{doublepower} admits a non-uniqueness result when $p > N/(N-2)$.

\vskip0.12in

Since the publication of the pioneering article\cite{BL},	a long-standing open question  is
\begin{itemize}
\item[\large (\textbf{Q})] Does \eqref{doublepower} admit a unique solution up to translations for all $\la > 0$ and all $1 < q < p < 2^*-1$?
\end{itemize}
	In this direction, an important result was obtained  by D\'avila, del Pino and Guerra in \cite{DPG}. They considered the following equation in $\R^3$  with two power-type nonlinearities:
\begin{align}\label{dpg=1}
\left\{
\begin{aligned}
&\Delta v-v+v^p+\beta  v^q=0, \;\; v>0\;  \hbox{ in  } \; \R^3,\\
& v(x)\to 0\;  \hbox{ as } \; |x|\to \infty,
\end{aligned}
\right.
\end{align}
and proved that when $1<q<3,$ then for each $\beta $ sufficiently large, there exists a number $p_0\in (1, 5)$ such that problem \eqref{dpg=1} has at least {\bf three} solutions for
all $p\in (p_0, 5)$. On the other hand,  if $2\leq q < 3 $, then  problem \eqref{dpg=1} has  {\bf two} large solutions which can be explicitly described, see \cite[Theorems 1.1 and  1.3]{DPG}. Thus, Kwong's uniqueness result is in general not true for the equation with two power-type nonlinearities. Moreover, it also tells us that the answer to (\textbf{Q}) depends on the dimension $N$. Motivated by these phenomena, it is interesting to determine whether there exists some $ N \ge 2$ such that the answer to (\textbf{Q}) is "yes". In \cite{DPG}, according to numerical results, the authors pointed out that the "yes" should occur in dimensions $N \ge 4$ and that establishing the uniqueness of solution to \eqref{doublepower} appears as a challenging problem.

\vskip0.16in

	In this paper, we will provide an affirmative and complete answer to (\textbf{Q}) for all $N \ge 5$. Furthermore, when $N = 2,3,4$, we still obtain the uniqueness result of \eqref{doublepower} as long as  $p$  and  $q$  are in the proper range.  In particular, when $N=3$, our uniqueness results  are in sharp contrast to the multiplicity results of  J. D\'avila, M. del Pino and I. Guerra \cite{DPG}. Our main result in current paper  is as follows.

\newpage

	\begin{theorem}[Uniqueness: the fixed frequency case] \label{thmmain}
		Let $\la > 0$ and let $N, p, q$ satisfy one of the following assumptions:
		\begin{itemize}
			\item[(1)] $N \ge 5$, $1 < q < p < 2^* -1$;
			\item[(2)] $N = 2,3,4$, $1 < q < p \le p_{\text{alg}}(N) := \frac{4 + 2\sqrt{2N + 4}}{N}$;
			\item[(3)] $N = 2,3,4$, $1 < q < p < 2^* -1$, $p - q \le 1$.
		\end{itemize}
		Then \eqref{doublepower} admits a unique solution up to translations.
	\end{theorem}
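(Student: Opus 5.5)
By Gidas--Ni--Nirenberg every solution of \eqref{doublepower} is radial up to translation. So it suffices to prove that the radial problem has exactly one solution for each $\la>0$. I would use a continuation argument in $\la$ combined with nondegeneracy.

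\textbf{Step 1: rescaling.} Set $u(x)=\la^{1/(q-1)}w(\la^{1/2}x)$. This turns the equation into
\[
-\Delta w+w=w^q+\mu w^p,\qquad \mu=\la^{(p-q)/(q-1)}.
\]
As $\la\to0$ we have $\mu\to0$, so the problem is a perturbation of the pure power $q$ problem. Kwong's result gives a unique, nondegenerate ground state there. By the implicit function theorem in $H^1_{rad}$, together with a priori bounds that force every solution for small $\la$ to converge to that ground state, uniqueness holds for all small $\la$. The same argument after the other scaling $u=\la^{1/(p-1)}v(\la^{1/2}x)$ gives uniqueness for large $\la$, using the pure power $p$ problem.

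\textbf{Step 2: uniform bounds.} I need bounds uniform on compact $\la$-intervals, which follow from a blow-up argument since $p<2^*-1$. With these, the set of radial solutions $\{(\la,u)\}$ is a locally compact set.

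\textbf{Step 3: radial nondegeneracy.} The core is to show that every radial solution is nondegenerate in $H^1_{rad}$, i.e.\ the linearized operator
\[
L=-\Delta+\la-pu^{p-1}-qu^{q-1}
\]
has trivial radial kernel. Once this holds, the implicit function theorem shows that the solution set consists of smooth curves $\la\mapsto u_\la$ with no turning points. Combined with the compactness from Step 2 and the uniqueness at the ends from Step 1, this gives global uniqueness. One can also argue by Leray--Schauder degree: each solution then has radial Morse index $1$, hence contributes the same index, and the total degree equals that of the pure power problem.

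For nondegeneracy I would argue by contradiction. Suppose $\phi\ne0$ is a radial kernel element. Since the solution is a mountain pass type critical point with radial Morse index one, $\phi$ must change sign exactly once, at some $r_0$. I then test $L\phi=0$ against functions built from the explicit near-kernel elements
\[
u,\qquad x\cdot\nabla u,\qquad \partial_\la u\text{-type combinations}.
\]
We have
\[
L u=-(p-1)u^p-(q-1)u^q,\qquad L(x\cdot\nabla u)=-2\la u+2u^p+2u^q .
\]
Choosing a combination $\psi=x\cdot\nabla u+\alpha u+\beta$, in the spirit of Kwong and Yanagida, I would arrange that $L\psi$ has a single sign change at the same $r_0$. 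Then $\int \phi\,L\psi=0$ yields a contradiction.

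\textbf{Main obstacle.} Producing such a $\psi$ is the hardest step. One needs a suitable $\alpha$, and possibly a $\beta u^{s}$ term, so that
\[
(\cdot)\,u^{p-1}+(\cdot)\,u^{q-1}+(\cdot)\la
\]
has the right sign structure, and this is exactly where the dimensional conditions enter. For $N\ge5$ the coefficients $2-\alpha(p-1)$ and $2-\alpha(q-1)$ with $\alpha=(N-2)/2$ have compatible signs across the full subcritical range. In dimensions $2,3,4$ one needs either $p-q\le1$, so that the two powers can be handled by one weight, or $p\le p_{\text{alg}}(N)$. The latter arises from the discriminant of the quadratic inequality obtained by optimizing over $\alpha$ in a Pohozaev-type identity. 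I would first try to find $\alpha$ from the Pohozaev identity and verify the sign conditions algebraically.
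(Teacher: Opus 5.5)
Your overall architecture (radial reduction, nondegeneracy, continuation in $\la$, local uniqueness at an end of the $\la$-range) matches the paper's. However, Step~3 rests on an unproved and essential claim. You say a radial kernel element changes sign exactly once because ``the solution is a mountain pass type critical point with radial Morse index one''. That holds for the Nehari/mountain-pass solution, not for an arbitrary solution of \eqref{doublepower}, and there is no a priori reason for it. It is precisely the property that must use the hypotheses on $N,p,q$ (compare the multiplicity results of D\'avila--del Pino--Guerra in $\R^3$).

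In the paper this is Theorem~\ref{thmMorse}, the main technical part. Writing radial $f=\phi g$ with $\phi=-v_r$, the quadratic form becomes $\omega_{N-1}\int_{\R}\rho\,(g_s^2-(N-1)g^2)\,ds$. The bound $\la_{\text{P}}(\rho)\ge N-1$ then comes from the test function $\Phi=r^{(N-2)/2}v$ via $\widetilde H\Phi/\Phi=N-1+\G$. Everything thus reduces to the pointwise inequality $\G=r^2P-2zT>0$, which is proved under (1)--(3). The case $N=2$ needs a separate Picone argument, and $N=5$, $p<7/3$ needs a comparison with the $5$-dimensional Talenti bubble.

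Without this, your continuation does not close. Nondegeneracy of index-one solutions only yields a single index-one branch over $(0,\infty)$. Solutions of index $\ge 2$ could still live on folded curves or isolas away from the ends. The Leray--Schauder degree counts with signs and cannot exclude, e.g., a pair of solutions of index $2$ and $3$. Your degree remark (``each solution then has radial Morse index $1$'') is circular in the same way.

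Independently, the nondegeneracy step is not carried out, and the mechanism you sketch does not work as stated. For $\psi=x\cdot\nabla u+\alpha u$ one has $L\psi=u\,g(u)$ with $g(t)=-2\la+(2-\alpha(p-1))t^{p-1}+(2-\alpha(q-1))t^{q-1}$. Here $\alpha$ is not free: it is fixed by $g(u(r_0))=0$. The resulting $\alpha$ exceeds $2/(p-1)$ exactly when $(p-q)u(r_0)^{q-1}>(p-1)\la$. In that case $g$ is increasing--decreasing on $(0,\infty)$, so $L\psi$ may change sign twice and $\int\phi\,L\psi=0$ gives no contradiction. The extra terms $\beta$ and $\beta u^s$ are only named, not analysed.

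Your account of where the hypotheses enter is also off. With $\alpha=(N-2)/2$ both coefficients are positive for every $N\ge 3$ and every $p<2^*-1$, so nothing singles out $N\ge 5$. In the paper, nondegeneracy of an index-one solution is proved by first showing that $w/v$ is strictly decreasing. One then either pairs $e_s=(w/\phi)_s<0$ with $1/k$, using $(\mathcal A-(N-1))\frac1k=\G\frac1k>0$, or compares moment inequalities with the Nehari and Pohozaev identities when $N(p-1)\le 2(q+1)$. Finally, $p_{\text{alg}}(N)$ is the root of the discriminant of $(p+1)z^2-Npz+2N$ that appears in the proof of $\G>0$; it does not come from optimizing $\alpha$.
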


	\begin{remark}
		When $N =2$, we think that the answer to (\textbf{Q}) is "no", namely, in suitable ranges for the parameters $p, q$ and $\la$, problem \eqref{doublepower} has at least two solutions. We are preparing a paper to prove it rigorously.	Once it is proved, in order to obtain uniqueness result of \eqref{doublepower} in dimension $2$, additional assumptions on $p$ and $q$ are necessary.	
	\end{remark}

		Let $u$ be a radial positive solution of \eqref{doublepower} and let
		\begin{align*}
			L_u := -\Delta + \la - pu^{p-1} - qu^{q-1}
		\end{align*}
		be the linearized operator defined on $L^2(\R^N)$ with domain $H^2(\R^N)$ and form domain $H^1(\R^N)$. The Morse index of $u$, denoted by $m(u)$, is defined as the sum of the dimensions of the negative eigenspace of $L_u$. Similarly, the radial Morse index $m_{rad}(u)$ is defined as the sum of the dimensions of the negative radial eigenspace of $L_u$. We say $u$ is nondegenerate in $H^1_{rad}(\R^N)$ if
		\begin{align*}
			\ker L_u|_{L^2_{rad}(\R^N)} = \{0\},
		\end{align*}
		and is nondegenerate in $H^1(\R^N)$ if
		\begin{align*}
			\ker L_u = \text{span}\big\{\partial_1 u, \cdots, \partial_N u\big\}.
		\end{align*}
		It is not difficult to see that the solution of \eqref{doublepower} obtained as the minimizer on Nehari manifold has Morse index $1$. In previous studies, people usually establish  firstly the uniqueness result, which yields that any solution is exactly the minimizer on Nehari manifold and thus has Morse index $1$, and then investigate its nondegeneracy. In the current paper, we adopt a different approach. More precisely,  without establishing the uniqueness result, we deal with firstly  two questions:  one is whether any solution of \eqref{doublepower} has Morse index $1$, the other is whether the radial solutions of \eqref{doublepower}  with Morse index $1$ are nondegenerate or not. Using the nondegeneracy result, we can establish a local branch of radial solutions parameterized by $\la$ and extend it globally. Then, by the local uniqueness result as $\la \to \infty$, we derive that the global branch is unique. The results of Morse index and nondegeneracy read as follows.

		\begin{theorem}[Morse index] \label{thmMorse}
			Under the assumptions of Theorem \ref{thmmain}, for any solution $u$ of \eqref{doublepower}, we have $m(u) = 1$.
		\end{theorem}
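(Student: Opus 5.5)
Since $u$ is a mountain-pass-type critical point, $m(u)\ge 1$: with $f(u)=u^p+u^q$ we have $\langle L_u u,u\rangle = \int (u^{p+1}+u^{q+1}) - \int (pu^{p+1}+qu^{q+1}) <0$. The work is the upper bound $m(u)\le 1$. By Gidas–Ni–Nirenberg \cite{GNN} we may take $u$ radial and strictly decreasing. Decompose $L_u$ in spherical harmonics. In the sector of degree $k\ge1$ the operator is $L_u + k(k+N-2)/r^2$ acting on radial functions. For $k=1$, the function $u'(r)<0$ lies in its kernel and does not change sign, so it is the ground state of that sector, whose bottom eigenvalue is therefore $0$. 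Hence all nonradial sectors are nonnegative, and $m(u)=m_{rad}(u)$. It remains to show $m_{rad}(u)\le 1$.

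For the radial bound I would use a Sturm comparison argument. Let $w$ solve the radial linearized ODE $w''+\frac{N-1}{r}w'-\la w+(pu^{p-1}+qu^{q-1})w=0$ with $w(0)=1$, $w'(0)=0$. Then $m_{rad}(u)$ equals the number of zeros of $w$ in $(0,\infty)$, with one extra count if $w$ eventually grows to $-\infty$. So it suffices to produce a test function $\phi=\alpha u+\beta r u' + \gamma u^{s}$, or more generally some combination $a(r)u+b(r)u'$, such that $L_u\phi$ has a definite sign on $(0,\infty)$, or changes sign exactly once at a controlled place. The building blocks are the identities $L_u u = -(p-1)u^p-(q-1)u^q$ and $L_u(ru')=-2\la u+2u^p+2u^q$ (Pohozaev scaling). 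In addition, $L_u(u^s)$ can be computed through the first integral $\frac12 u'^2 + \int_0^u(f-\la t)\,dt$ and the identity $\frac{N-1}{r}u'$, which yields the extra parameter. A combination with $L_u\phi\ge0$ wherever $\phi>0$ and $L_u\phi\le 0$ wherever $\phi<0$, where $\phi$ has a single zero, forces $w$ to have at most one zero by a Wronskian/Picone argument. This gives $m_{rad}\le1$.

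The conditions on $N,p,q$ enter exactly in the choice of coefficients. The expression $L_u\phi$ is a polynomial-type expression in $(u^{p-1},u^{q-1},\la)$ plus a term $\frac{(N-1)}{r}u'\cdot(\cdots)$ whose sign must be controlled. I would choose $\beta$ to kill the $\la u$ term, which leaves the terms in $u^p$ and $u^q$ with coefficients $2-\alpha(p-1)$ and $2-\alpha(q-1)$. Taking $\alpha$ between $2/(p-1)$ and $2/(q-1)$ makes these have opposite signs, so their combination vanishes at exactly one radius, because $u$ is monotone. The remaining $u^s$ correction term must then be chosen so that the leftover dimension-dependent terms do not spoil the sign. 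I expect this step to be the main obstacle. The algebraic threshold $p_{\mathrm{alg}}(N)=\frac{4+2\sqrt{2N+4}}{N}$ strongly suggests that the discriminant of a quadratic form in $(s,p)$ must be nonnegative. For $N\ge5$ I expect this inequality to hold throughout $p<2^*-1$. In case (3), $p-q\le1$ should let the two power terms be compared directly, for example via $u^{p}\le u^{q}\cdot\|u\|_\infty^{p-q}$ with a concavity argument in the exponent. I would first try $\phi=ru'+\alpha u$ alone, checking where the sign fails, and add the $u^s$ term only where needed.

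Finally, I would exclude the degenerate endpoint in which $w$ stays bounded with one zero but $L_u$ restricted to radial functions still has two negative eigenvalues. This follows from the min-max characterization together with the single-sign-change property of $L_u\phi$. Combining this with the nonradial analysis and the lower bound gives $m(u)=1$.
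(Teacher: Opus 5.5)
Your lower bound $m(u)\ge1$ and the reduction $m(u)=m_{rad}(u)$ via spherical harmonics are correct. The Picone framework you describe is also sound: a test function $\phi$ with a single node at which $L_u\phi$ also changes sign, giving $Q_L\ge0$ on the codimension-one subspace of functions vanishing at the node. It is exactly the mechanism the paper uses for $N=2$.

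However, the radial bound $m_{rad}(u)\le1$ is the whole content of the theorem, and your proposal does not supply it. You call the construction of $\phi$ ``the main obstacle'' and leave it open, and the concrete ansatz already fails at its first step:
\begin{itemize}
\item In $L_u(\alpha u+\beta ru_r+\gamma u^s)$ the $\la u$ term has coefficient $-2\beta\la$, since $L_uu$ has no $\la u$ term and $L_u(u^s)$ produces $\la u^s$. Killing it therefore forces $\beta=0$, and then the coefficients of $u^p,u^q$ are $-\alpha(p-1),-\alpha(q-1)$.
\item The coefficients $2-\alpha(p-1)$, $2-\alpha(q-1)$ you then use require $\beta=1$, which leaves $-2\la u$ in place.
\item With that term present, $L_u\phi/u=-2\la+(2-\alpha(p-1))u^{p-1}+(2-\alpha(q-1))u^{q-1}$ can change sign twice. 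Nothing forces its zero to coincide with the node of $\phi$, and Picone requires exactly that.
\item $L_u(u^s)$ contains $-s(s-1)u^{s-2}u_r^2$. Since $\frac12u_r^2+\int_0^u(t^p+t^q-\la t)\,dt$ is strictly decreasing when $N\ge2$, this is not a function of $u$. It is precisely the leftover term whose sign you never control.
\end{itemize}
Since uniqueness fails for some parameters when $N=3$ \cite{DPG}, any such argument must use the hypotheses essentially. Your plan gives no mechanism for that.

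For comparison, the paper's test function is nonlocal. After rescaling to $v$ and setting $k=-v_r/v$, $s=\ln r$, it is $W=-v_r\,g$ with $g_s=1/k$.
\begin{itemize}
\item For $N\ge3$ it appears as the positive function $\Phi=r^{(N-2)/2}v$ for the partner operator $\widetilde H$ in the weighted Poincar\'e formulation, with $\widetilde H\Phi/\Phi=N-1+\G$.
\item For $N=2$ the node is fixed by $2y+z=N-1$, and $\big((L_\rho-(N-1))g\big)_s=\G g_s$ gives the single sign change.
\end{itemize}
Everything reduces to $\G=r^2P-2zT>0$. This is proved by a first-zero argument resting on the a priori bounds of Lemmas \ref{lemestimate} and \ref{lemsestimate}.

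Your guess about how the hypotheses enter is also off. The discriminant condition $p\le p_{\text{alg}}(N)$ does not cover all $p<2^*-1$ when $N=5$ (since $p_{\text{alg}}(5)<7/3$) or when $N\ge7$. For $N\ge6$ the bad case is instead excluded by the bound $z<1+\beta$. For $N=5$, the range $p_{\text{alg}}(5)<p<7/3$ needs the separate Talenti-bubble comparison of Proposition \ref{propG>03}.

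A minor point: below $\inf\sigma_{ess}=\la$, the oscillation theorem gives $m_{rad}(u)$ as exactly the number of zeros of $w$ on $(0,\infty)$. So neither your ``extra count'' nor your ``degenerate endpoint'' arises.
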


		\begin{theorem}[Nondegeneracy] \label{thmnondegenerate}
			Let $\la > 0$, $N \ge 2$ and $1 < q < p < 2^*-1$. Assume that either one of the conditions in Theorem \ref{thmmain} holds or
			\begin{align*}
				N(p-1) \le 2(q+1).
			\end{align*}
		Then every radial solution of \eqref{doublepower} having Morse index $1$ is nondegenerate in $H^1_{rad}(\R^N)$ and nondegenerate in $H^1(\R^N)$.
		\end{theorem}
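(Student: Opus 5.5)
We first reduce the claim to the radial part. Decompose $L_u$ into spherical harmonics. For modes $k\ge 1$ the radial operator is $L_k = -\partial_{rr} - \frac{N-1}{r}\partial_r + \frac{k(k+N-2)}{r^2} + \la - pu^{p-1} - qu^{q-1}$. Since $u' <0$ for $r>0$ and $L_1 u' = 0$, the Sturm comparison argument (as in Kwong's paper) shows that $L_1\ge 0$ with kernel spanned by $u'$, and that $L_k>0$ for $k\ge2$. Therefore nondegeneracy in $H^1(\R^N)$ follows from nondegeneracy in $H^1_{rad}(\R^N)$, and we only have to rule out a nontrivial radial solution $w$ of $L_u w=0$ in $L^2_{rad}$.

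Suppose such a $w$ exists. We have $m_{rad}(u)\le m(u)=1$, and $L_u u = -(p-1)u^p-(q-1)u^q<0$ shows $m_{rad}(u)= 1$. Then $w$ is the second radial eigenfunction, with eigenvalue $0$. By Sturm–Liouville theory, $w$ changes sign exactly once, at some $r_0>0$; we normalize $w>0$ on $(0,r_0)$ and $w<0$ on $(r_0,\infty)$. The strategy is to build test functions $v$ from the scaling and Pohozaev identities for which $L_u v$ is explicit, and then use $\int (L_u v) w = \int v L_u w = 0$. Set $\Lambda u := x\cdot\nabla u$. We use
\begin{align*}
L_u u &= -(p-1)u^p - (q-1)u^q, \qquad L_u 1 = \la - pu^{p-1}-qu^{q-1},\\
L_u(\Lambda u) &= -2\Delta u = 2\big(u^p+u^q-\la u\big).
\end{align*}
We consider combinations $v=\Lambda u + a u + b$, with $b$ used only formally (pair it with $w$ via the ODE and boundary terms at infinity, which vanish by exponential decay). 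We then choose $a,b$ so that $g:=L_u v$, viewed as a function of $s=u(r)\in(0,u(0)]$, changes sign exactly once, at the value $s_0=u(r_0)$. Because $u$ is decreasing, $g(u(r))\,w(r)$ then has a constant sign. This contradicts $\int g\,w\,r^{N-1}dr=0$.

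Explicitly, with $b=0$ we have $g(s) = (2-a(p-1))s^p + (2-a(q-1))s^q - 2\la s$, and the free parameter $a$ is tuned so that $g(s_0)=0$. The main obstacle is to show that such a one-parameter, or two-parameter with $b$, family has exactly one sign change on $(0,u(0)]$ with the right orientation, and that the admissible $a$ exists. Dividing by $s$, the sign of $g$ is governed by $h(s)=\alpha s^{p-1}+\beta s^{q-1}-2\la$, where $\alpha=2-a(p-1)$ and $\beta=2-a(q-1)$. If $\alpha,\beta$ do not both have unfavorable signs, $h$ is monotone and has a single zero. Difficulties arise when $\alpha$ and $\beta$ have opposite signs. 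This is where the hypotheses enter.

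Case $N(p-1)\le 2(q+1)$: we would combine with the Pohozaev identity, $\int \big(\tfrac{N-2}{2}|\nabla u|^2\big)=N\int F(u)-\tfrac{N\la}{2}\int u^2$, to exclude the bad range of $a$.

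Other cases: we would use the Morse index one information, namely the variational inequality $\int (L_u\varphi)\varphi\ge0$ on the $L^2$-orthogonal complement of the ground eigenfunction, tested with $\varphi=\Lambda u + a u$. This yields constraints comparing $\int u^{p+1}$ and $\int u^{q+1}$ which force $a$ into the good range. It is at this step that thresholds like $p_{alg}(N)$ and $p-q\le1$ should appear. We expect this to be the delicate computation.
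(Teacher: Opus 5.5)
Your reduction to the radial problem via spherical harmonics is correct and agrees with the paper, as is your observation that a radial kernel element $w$ must be the second radial eigenfunction with a single node $r_0$. The gap is the step that should produce the contradiction, which you explicitly leave open; all the difficulty lies there.

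With $b=0$ and $a$ fixed by $h(u(r_0))=0$, one computes that $\alpha=2-a(p-1)\ge 0$ holds if and only if $(p-q)u(r_0)^{q-1}\le (p-1)\lambda$. In that case $h$ is increasing, $g\,w>0$ away from $r_0$, and your argument closes. Otherwise $\alpha<0<\beta$, and $h$ is unimodal with $h(0)=-2\lambda<0$. Whenever $h$ takes negative values on $(u(r_0),u(0))$ (e.g.\ if $u(r_0)$ is its descending zero), no nonzero element of $\mathrm{span}\{L_uu,\,L_u(\Lambda u)\}$ has the sign pattern you need, since every element vanishing at $r_0$ is a multiple of this particular $g$. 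You give no reason why adding $b\,L_u1$, which only brings in the monomials $1,s^{q-1},s^{p-1}$, would repair this.

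So your argument actually establishes only that a degenerate Morse-index-one solution must satisfy $(p-q)u(r_0)^{q-1}>(p-1)\lambda$. The two routes you suggest for excluding this are not arguments. The Nehari and Pohozaev identities relate integrals of $u$ and by themselves say nothing about the point value $u(r_0)$. Testing the quadratic form on $\Lambda u+au$ gives inequalities that hold for every Morse-index-one solution and do not involve $w$; the Morse-index information has already been spent to get the single node. Nor is there reason to expect $p_{\text{alg}}(N)$ to emerge from such integral relations. In the paper it is the discriminant threshold of $(p+1)z^2-Npz+2N$, which arises in a pointwise ODE analysis.

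The missing ideas are as follows; I use the paper's normalization $v(0)=1$ with $\eta,\theta$ as in \eqref{eqofv}.

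First, from $(r^{N-1}v^2\chi_r)_r=-r^{N-1}v\,G(v)\chi$ with $G(t)=(p-1)(1-\theta)t^p+(q-1)\theta t^q$, and from the single node, one gets that $\chi=w/v$ is strictly decreasing.

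For the case $N(p-1)\le 2(q+1)$, a Chebyshev-type double-integral argument then orders the ratios $\int vw/\int v^2<\int v^qw/\int v^{q+1}<\int v^pw/\int v^{p+1}$. Combine this with precisely the two pairings your $g$ encodes, $\int w\,Lv=\int w\,L(x\cdot\nabla v)=0$. This forces $(p-q)\theta\int v^{q+1}>(p-1)\eta\int v^2$, which Nehari plus Pohozaev rule out. That inequality is exactly the $v^2$-weighted average of your pointwise bad-regime condition. The monotonicity of $w/v$ is what converts a condition at $r_0$ into one that Pohozaev can see.

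Under the hypotheses of Theorem \ref{thmmain}, the paper uses no scaling test functions at all. It proves $\G=r^2P-2zT>0$ by a delicate ODE analysis; this is where $p\le p_{\text{alg}}(N)$, $p-q\le 1$ and the separate $N=5$ argument enter. Consequently $1/k$ is a strict supersolution, $(\mathcal{A}-(N-1))(1/k)=\G/k>0$, in the variable $s=\ln r$. Meanwhile $e_s$, with $e=w/\phi$ and $\phi=-v_r$, solves $(\mathcal{A}-(N-1))e_s=0$ and is negative. The Wronskian identity then gives $\int\rho\,e_s\,\G/k\,ds=0$, which is impossible by signs.
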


		\medbreak
		Next, we turn our attention to the uniqueness of positive solutions with a prescribed mass. This result has many applications in other problems, such as proving the orbital stability of standing wave solutions, see Remark \ref{rmkstability} below. Let us consider the following equation:
		\begin{align} \label{eqmass}
		\left\{
			\begin{aligned}
				& -\Delta u + \la u = u^{p} + u^{q} \quad \text{in } \R^N, \\
				& u > 0, \quad u(x) \to 0 ~~\text{ as }~ |x| \to \infty, \\
				& \int_{\R^N}u^2dx = c,
			\end{aligned}
		\right.
	\end{align}
	where $N \ge 2$, $1 < q < p \le 1 + 4/N$, $c > 0$ is prescribed and $\la \in \R$ is unknown. Let $Q \in H^1_{rad}(\R^N)$ be the unique radial positive solution of
	\begin{align*}
		-\Delta Q + Q = Q^{1 + 4/N} \quad \text{ in } \R^N
	\end{align*}
	and $c_0 := \int_{\R^N}Q^2dx$. For any $c > 0$ when $1 < q < p < 1 + 4/N$ and for any $c \in (0,c_0)$ when $1 < q < p = 1 + 4/N$, the existence of a solution of \eqref{eqmass} can be established by finding a minimizer on the mass constrained sphere
	\begin{align*}
		S_c := \big\{u \in H^1(\R^N): \int_{\R^N}u^2dx = c \big\},
	\end{align*}
	and $\la$ appears as a Lagrange multiplier. As for the uniqueness issue, even if uniqueness of \eqref{doublepower} is established for any $\la > 0$, two different $\la_1$ and $\la_2$ may correspond to two solutions $u_1$ and $u_2$ having the same $L^2(\R^N)$-norm. In this sense, the uniqueness problem in the fixed mass case is more challenging than the fixed frequency case. Indeed, there are very few results in this regard. In \cite{HS2}, the first author and his co-author obtained some uniqueness results with an additional condition ($A_2$), but ($A_2$) was only proved when $N=1$ and remained open for higher dimensions, see Theorem 3.5 in \cite{HS2}. In this paper, we completely resolve the uniqueness problem of \eqref{eqmass} in the stated exponent range for all $N \ge 2$.

	\begin{theorem}[Uniqueness: the fixed mass case] \label{thmmain2}
		Let $N \ge 2$.
		\begin{itemize}
			\item If $1 < q < p < 1 + 4/N$, for any $c > 0$, \eqref{eqmass} admits a unique solution $(u_c,\la_c) \in H^1_{rad}(\R^N) \times (0,\infty)$.
			\item If $1 < q < p = 1 + 4/N$, for any $c \in (0,c_0)$, \eqref{eqmass} admits a unique solution $(u_c,\la_c) \in H^1_{rad}(\R^N) \times (0,\infty)$; for any $c \ge c_0$, \eqref{eqmass} admits no solution.
		\end{itemize}	
	\end{theorem}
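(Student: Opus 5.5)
The plan is to reduce the fixed mass problem to the fixed frequency results and then show that the mass is strictly monotone along the branch of solutions. First note that for $p\le 1+4/N$ and $N\ge2$, assumption (2) or (3) of Theorem \ref{thmmain} holds in low dimensions, and (1) holds for $N\ge5$. Indeed, for $N=2,3,4$ one checks $1+4/N\le p_{\text{alg}}(N)$: squaring gives $N-4\le 2\sqrt{2N+4}$, which is true. So for every $\la>0$ there is a unique radial solution $u_\la$ of \eqref{doublepower}. By Theorem \ref{thmMorse} it has $m(u_\la)=1$, and by Theorem \ref{thmnondegenerate} it is nondegenerate in $H^1_{rad}$.

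Next I would use the implicit function theorem, applied to $F(u,\la)=u-(-\Delta+\la)^{-1}(u^p+u^q)$ on $H^1_{rad}$, together with uniqueness, to see that $\la\mapsto u_\la$ is a $C^1$ curve. A Pohozaev argument shows that any solution of \eqref{eqmass} has $\la>0$: combining the Nehari and Pohozaev identities gives $\la\int u^2=\sum_{r\in\{p,q\}}\big(1-\tfrac{N(r-1)}{2(r+1)}\big)\int u^{r+1}>0$, since $r<2^*-1$. Hence the solutions of \eqref{eqmass} are exactly the points of the curve with $M(\la):=\int u_\la^2=c$. The key claim is $M'(\la)>0$ for all $\la>0$.

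Differentiating the equation gives $L_{u_\la}\partial_\la u_\la=-u_\la$, so $M'(\la)=2\langle u_\la,\partial_\la u_\la\rangle=-2\langle L_{u_\la}^{-1}u_\la,u_\la\rangle$. Since $m(u_\la)=1$, it suffices to show $\langle L^{-1}u,u\rangle<0$, which is the standard Vakhitov–Kolokolov/Weinstein criterion. I would use the scaling generator: with $v=\frac{2}{p-1}u+x\cdot\nabla u$ one computes
\[
L_u v=-2\la\frac{1}{p-1}\cdot(p-1)u\cdot\tfrac{1}{1}\Big/\!(p-1)\ \ \text{(schematically)}\quad\text{i.e.}\quad L_u v=-2\la u+\frac{2(p-q)}{p-1}\,u^q,
\]
and similarly with $p,q$ exchanged. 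Writing $L^{-1}u$ as a combination of these test functions and $u$ itself (using $L_u u=-(p-1)u^p-(q-1)u^q$), one obtains an explicit expression for $\langle L^{-1}u,u\rangle$ in terms of $\int u^2,\int u^{p+1},\int u^{q+1}$, or at least an inequality for it. The sign of this quantity is where the condition $p\le 1+4/N$ should enter. A cleaner alternative is to test with $w=L^{-1}u$ together with the negative direction and use the min–max characterization. Proving this sign is, I expect, the main obstacle. If a direct identity fails, I would instead argue by contradiction: $M'(\la_0)=0$ would make $u_{\la_0}$ a degenerate critical point of the energy restricted to $S_c$ with constrained Morse index $0$. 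I would then compare with the global minimizer, using that every constrained critical point with $p\le1+4/N$ must be a minimizer because of the Gagliardo–Nirenberg structure.

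Finally I would determine the asymptotics of $M$. As $\la\to0^+$, rescaling by $q$ gives $u_\la\approx\la^{1/(q-1)}Q_q(\sqrt\la x)$, so $M\sim\la^{\frac{2}{q-1}-\frac N2}\to0$ because $q<1+4/N$. As $\la\to\infty$, rescaling by $p$ gives $M\sim\la^{\frac2{p-1}-\frac N2}\|Q_p\|_2^2$. This tends to $\infty$ if $p<1+4/N$, and tends to $c_0$ if $p=1+4/N$, in which case the correction from $u^q$ makes $M<c_0$. Monotonicity and continuity then give exactly one $\la_c$ for each $c$ in the range $(0,\infty)$, respectively $(0,c_0)$. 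When $p=1+4/N$ and $c\ge c_0$ there is no solution, since $M(\la)<c_0$ on the whole curve. This can also be checked directly: the Pohozaev/Nehari identity $\int|\nabla u|^2=\frac{N(p-1)}{2(p+1)}\int u^{p+1}+\frac{N(q-1)}{2(q+1)}\int u^{q+1}$ combined with the sharp Gagliardo–Nirenberg inequality $\int u^{2+4/N}\le\frac{N+2}{N}(\|u\|_2^2/c_0)^{2/N}\int|\nabla u|^2$ forces $c<c_0$.
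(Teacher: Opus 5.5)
The architecture matches the paper's. Fixed-frequency uniqueness applies for $p\le 1+4/N$; your reduction should read $N\le 2\sqrt{2N+4}$, i.e.\ $N^2\le 8N+16$, not $N-4\le 2\sqrt{2N+4}$, but the conclusion is right. The solutions form a $C^1$ curve $\la\mapsto u_\la$ with $\la>0$. The limits $M(\la)\to 0$ as $\la\to0$ and $M(\la)\to\infty$ (resp.\ $c_0$) as $\la\to\infty$ upgrade $M'\neq0$ to $M'>0$.

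\textbf{The gap.} You never show $M'(\la)=2\int_{\R^N}u_\la v_\la\,dx\neq 0$, where $L_{u_\la}v_\la=-u_\la$, and neither of your routes can.

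Your scaling generators lie in $\mathrm{span}\{u,\,x\cdot\nabla u\}$. Its image under $L_u$ is spanned by $(p-1)u^p+(q-1)u^q$ and $u^p+u^q-\la u$. For $p\neq q$ this span does not contain $u$, so $L_u^{-1}u$ has no closed form. Concretely, with $H(a)=\int u^a v\,dx$ and $M=\int u^2dx$, testing $L_uv=-u$ against $u$ and $x\cdot\nabla u$ gives only
\[
(p-1)H(p)+(q-1)H(q)=M,\qquad H(p)+H(q)-\la H(1)=\tfrac{N}{4}M .
\]
These are two linear relations among four unknowns, so $H(1)=\int uv$ is left free. No identity or inequality for its sign comes out of these test functions.

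Your fallback is unjustified. Mass subcriticality gives coercivity on $S_c$, not absence of other critical points. The claim that every constrained critical point is a minimizer is essentially the uniqueness statement itself.

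\textbf{The missing idea} (Lemma \ref{intuvneq0}) is qualitative. Suppose $H(1)=0$. The two relations then give $(p-q)H(q)=\bigl(\frac N4(p-1)-1\bigr)M\le 0$ and $(p-q)H(p)=\bigl(1-\frac N4(q-1)\bigr)M>0$; this is exactly where $p\le 1+4/N$ enters.

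Since $m(u)=1$, the set $\{v>0\}$ is a single nonempty interval $(r_1,r_2)$, because two components would give a two-dimensional negative subspace. Moreover $v$ has no other zeros, and $v<0$ near infinity by Lemma \ref{asymptoticlemma}.
\begin{itemize}
\item If $r_1=0$, then $H(q)=\int(u^{q-1}-u(r_2)^{q-1})uv\,dx>0$, a contradiction.
\item If $r_1>0$, then $t\mapsto u(0)^{-t}H(t+1)$ is the Laplace transform of a signed measure with sign pattern $(-,+,-)$. By Lemma \ref{atmosttwozero} it has at most two zeros. Yet it vanishes at $t=0$, changes sign on $[q-1,p-1)$, and is positive at $t=p-1$ but negative for large $t$, giving a third zero.
\end{itemize}

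\textbf{Nonexistence for $c\ge c_0$.} Your direct check also fails. Pohozaev plus sharp Gagliardo--Nirenberg gives only $\bigl(1-(c/c_0)^{2/N}\bigr)\int|\nabla u|^2\le\frac{N(q-1)}{2(q+1)}\int u^{q+1}$. This is consistent for $c\ge c_0$, since the right side is positive. Your remark that ``the correction from $u^q$ makes $M<c_0$'' is only a large-$\la$ statement. In the paper, $M<c_0$ along the whole curve comes from strict monotonicity together with $M(\la)\to c_0$. So this part also hinges on the missing step.
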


	\begin{remark} \label{rmkstability}
		Define the energy functional
		\begin{align}
			I(u) := \frac{1}{2}\int_{\R^N}|\nabla u|^2dx - \frac{1}{p+1}\int_{\R^N}|u|^{p+1}dx - \frac{1}{q+1}\int_{\R^N}|u|^{q+1}dx.
		\end{align}
		When $1 < q < p < 1+4/N$ and $c > 0$, Lions showed in \cite{Lions} that the set
		\begin{align*}
			M := \big\{u \in S_c: I(u) = e_0\big\}, \quad e_0 := \inf_{S_c}I
		\end{align*}
		is not empty. The same result holds when $1 < q < p = 1+4/N$ and $c \in (0,c_0)$. It is clear that any element in $M$ is a positive (multiplying $-1$ if it is negative) solution of \eqref{eqmass}. Let
		\begin{align}
			M_0 := \big\{u \in H^1(\R^N;\mathbb{C}): \int_{\R^N}|u|^2dx = c, I(u) = e_0\big\}.
		\end{align}
		By classical arguments developed by Cazenave and Lions in \cite{CLions}, the set $M_0$ is orbitally stable. As an application of Theorem \ref{thmmain2}, we get
		\begin{align*}
			M_0 = \big\{e^{i\theta} u_c(\cdot + y): \theta \in \R, y \in \R^N \big\},
		\end{align*}
		and any standing wave solution corresponding to an element in $M_0$ is orbitally stable, where $u_c$ is the unique radial positive solution given in Theorem \ref{thmmain2}.
	\end{remark}

	In Section \ref{secpre1}, we perform a scaling transformation to convert Theorem \ref{thmMorse} and Theorem \ref{thmnondegenerate} into an equivalent problem for the equation \eqref{eqofv}. Then we list standard decay and asymptotic results for positive solutions, and obtain some useful estimates \eqref{usefulestimate1} and \eqref{usefulestimate2}. In Section \ref{sectransformation}, when $N \ge 3$, we transform the estimate of Morse index into a spectral lower bound estimate problem, and further transform it into proving $\G > 0$ by choosing a suitable test function, where $\G$ is given in \eqref{def:G}. When $N \ge 6$, $1 < q < p < 2^*-1$, or $2 \le N \le 5$, $1 < q < p \le p_{\text{alg}}(N)$, or $2 \le N \le 5$, $1 < q < p < 2^*-1$, $p - q \le 1$, we prove $\G > 0$, see Proposition \ref{propG>01} and Proposition \ref{propG>02}. The more challenging case when $N = 5$, $1 < q < p < 7/3$ is addressed in Section \ref{G>0whenN=5}. One of the main ingredients is the introduction of $\cR$ given in Step 1 in the proof of Proposition \ref{propG>03}, which is exactly zero on the 5-dimensional Talenti bubble and is positive for the double power-type  equation having a lower order term. This gives the comparison of $L^{p+1}$-moment and $L^{q+1}$-moment with the 5-dimensional Talenti bubble $W_0$ given in \eqref{def:W_0(r)} and further yields the key estimate \eqref{C<}. Note that the 4-dimensional Talenti bubble is not in $L^2$, thus arguments developed in Section \ref{G>0whenN=5} do not work when $N = 4$. In Section \ref{secMorse}, we complete the proof of Theorem \ref{thmMorse}. The case $N = 2$ needs additional arguments since Lemma \ref{lemtransformation} fails to work in this case. In Section \ref{secnondegenerate}, we focus on the proof of Theorem \ref{thmnondegenerate}. Suppose by contradiction that solution $v$ of \eqref{eqofv} is degenerate in $H_{rad}^1(\R^N)$ and $w \in H_{rad}^1(\R^N)$ is the second radial eigenfunction of the linearized operator. We prove $w/v$ is strictly decreasing in $r > 0$. Then we use two independent methods to obtain a contradiction. In Subsection \ref{subsecspectralmethod}, we use the weighted spectral method to find a contradiction when $\G > 0$ holds. In Subsection \ref{subM-Pmethod} we arrive at the desired conclusion by comparing inequalities obtained by using moment identities and Pohozaev identity when $N \ge 2$, $1 < q < p < 2^* - 1$, $N(p-1) \le 2(q+1)$. Section \ref{secuniqueness1} proves Theorem \ref{thmmain} by combining nondegeneracy, continuation of positive radial branches, and uniqueness in the large-frequency limit. Section \ref{secuniqueness2} proves strict monotonicity of the mass along the resulting global branch when $N\ge 2$, $1 < q < p \le 1 + 4/N$, and completes the proof of Theorem \ref{thmmain2}.

\vskip0.3in

Before closing the section, we  will give a very brief note to introduce some articles and results on the uniqueness of positive solutions of elliptic partial differential equations, although it has no direct relationship with our current article. In \cite{Aftalion2004},  A. Aftalion and  F. Pacella studied  the positive radial solutions of the  $p$-Laplace equation defined on the  unit ball with  Dirichlet boundary condition. They showed
 the uniqueness and nondegeneracy of positive radial solutions.
In \cite{Ni+Nussbaum}  W.-M.  Ni  and R.D. Nussbaum studied the  uniqueness of positive solutions of the nonlinear Dirichlet problem
$\Delta u + f(u, r) = 0$ on  a ball or an annulus in $\R^N$.  In \cite{Cortazar}, C. Cort\'azar,  M. Elgueta and P. Felmer considered the uniqueness of positive solutions of $\Delta u+f(u)=0$  in $\R^N, N\geq 3$, where $ uf'(u)/f(u)$ is monotone decreasing in a subset of $[0, +\infty)$.
In \cite{Troy2016},  W.C. Troy obtained the uniqueness of positive ground state solutions of the logarithmic Schr\"odinger equation.
In the recent paper \cite{MTang=Inv} by M. Tang, the  uniqueness of bound states to
$-\Delta u +u=|u|^{p-1} u$ in $\R^n, n\geq 3$ was proved. Hence, he gave a positive answer to a conjecture of Berestycki and Lions in 1983 on the
uniqueness of sign-changing bound states to $-\Delta u + u= |u|^{p-1} u $  in $\R^n,  u \in H^1(\R^n)$, $1<p<(n+2)/(n-2). $  It was  shown in \cite{MTang=Inv}   that, for each integer $k \geq  1,$  the problem has a unique solution $u = u(|x|),$    up to translation and reflection, that has precisely $k$ zeros for $|x| > 0.$ More conclusions,
we refer to readers to the references cited in \cite{Aftalion2004,Cortazar, DPG, Ni+Nussbaum,MTang=Inv, Troy2016,Yanagida}. For the ODE case, see the recent paper \cite{Feltrin}.
For non-uniqueness of positive solutions for supercritical semilinear heat equations, see for example \cite{Hisa}.

	\section{Preliminaries for the proof of Theorem \ref{thmMorse} and Theorem \ref{thmnondegenerate}} \label{secpre1}

	Let $u$ be a solution of \eqref{doublepower} with $N \ge 2$, $\la > 0$ and $1 < q < p  < 2^*-1$. Using the standard elliptic regularity theory, one gets $u \in L^\infty(\R^N) \cap C^2(\R^N)$. By the iteration technique and using that $u > 0$, we can further see that $u \in C^\infty$. By the classical Gidas, Ni and Nirenberg result \cite{GNN}, we assume $u$ is radially symmetric up to translations. Moreover, let $u(r) = u(x)$ where $r = |x| \in [0,\infty)$ and we have $u'(r) < 0$ for $r > 0$.

	Let
	\begin{align} \label{scalingv}
		v(x) = u(0)^{-1}u\left(\frac{x}{\sqrt{u(0)^{p-1}+u(0)^{q-1}}}\right).
	\end{align}
	Then $v(0) = 1$, $0 < v \le 1$ and $v(x)$ satisfies the following equation:
	\begin{align} \label{eqofv}
		-\Delta v + \eta v = (1-\theta)v^p + \theta v^q \quad \text{ in } \R^N,
	\end{align}
	where
	$$
	\eta = \frac{\la}{u(0)^{p-1}+u(0)^{q-1}}, \quad \theta = \frac{u(0)^{q-1}}{u(0)^{p-1}+u(0)^{q-1}} \in (0,1).
	$$
	It is clear that $m(u) = m(v)$. Moreover,  the nondegeneracy of $u$ and the nondegeneracy of $v$ in $H^1(\R^N)$ (in $H_{rad}^1(\R^N)$) are equivalent.
The following two lemmas are standard decay and asymptotic estimates.

	\begin{lemma}
		For any $\kappa \in (0,\sqrt{\eta})$, there exist $C_\kappa, R_\kappa > 0$ such that
		\begin{align*}
			v(r) + |v_r(r)| + |v_{rr}(r)| \le C_\kappa e^{-\kappa r} \quad \text{ for } r \ge R_\kappa.
		\end{align*}
	\end{lemma}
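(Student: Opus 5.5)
We plan to prove the decay estimate by working with the radial ODE satisfied by $v$ and treating the nonlinear terms as small perturbations of the linear operator $-\Delta+\eta$ once $r$ is large. In radial form \eqref{eqofv} reads
\begin{align*}
v_{rr} + \frac{N-1}{r}v_r = \eta v - (1-\theta)v^p - \theta v^q =: \big(\eta - c(r)\big)v, \qquad c(r) := (1-\theta)v^{p-1} + \theta v^{q-1}.
\end{align*}

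\textbf{Step 1: $v$ and $v_r$ tend to zero.} Since $v(r)\to0$ as $r\to\infty$ and $p,q>1$, we have $c(r)\to 0$. Fix $\kappa\in(0,\sqrt{\eta})$ and choose $R_0$ such that $\eta - c(r) \ge \kappa^2$ for $r\ge R_0$. Because $v_r<0$ and $v\to0$, the function $v_r$ is integrable at infinity. Writing the equation as $(r^{N-1}v_r)' = r^{N-1}(\eta-c)v>0$ for $r\ge R_0$, the quantity $r^{N-1}v_r$ is increasing and negative, so $|v_r|\le C r^{1-N}$ is bounded. Since $v_{rr}$ is also bounded, a standard argument then gives $v_r\to0$.

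\textbf{Step 2: exponential decay of $v$ by comparison.} Set $\phi(r)=Me^{-\kappa r}$. For $r\ge R_0$,
\begin{align*}
\phi'' + \frac{N-1}{r}\phi' - (\eta-c)\phi \le \big(\kappa^2 - (\eta - c)\big)\phi \le 0,
\end{align*}
because $\phi'<0$. So $\phi$ is a supersolution of the linear equation $w''+\frac{N-1}{r}w'-(\eta-c)w=0$, of which $v$ is a solution. Choose $M$ so that $\phi(R_0)\ge v(R_0)$. Then $w=v-\phi$ satisfies $w''+\frac{N-1}{r}w' \ge (\eta-c)w$ on $(R_0,\infty)$, with $w(R_0)\le0$ and $w(\infty)=0$. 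If $w$ had a positive maximum, then at that point $w''\le0$, $w'=0$ and $(\eta-c)w>0$, which is a contradiction. Hence $v\le Me^{-\kappa r}$.

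\textbf{Step 3: derivatives.} We use a slightly larger $\kappa'\in(\kappa,\sqrt\eta)$ in Step 2, so that $v\le Ce^{-\kappa' r}$. Integrating $(r^{N-1}v_r)'=r^{N-1}(\eta-c)v$ from $r$ to $\infty$ is justified because $r^{N-1}v_r\to0$; this limit follows from monotonicity together with the exponential integrability of the right-hand side. It gives
\begin{align*}
|v_r(r)| \le r^{1-N}\int_r^\infty s^{N-1}\eta\, v(s)\,ds \le C e^{-\kappa r}.
\end{align*}
Finally, the equation itself yields $|v_{rr}|\le \frac{N-1}{r}|v_r| + \eta v + v^p + v^q \le Ce^{-\kappa r}$.

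The only delicate point is Step 1, namely establishing that $r^{N-1}v_r\to0$ and that $c(r)\to0$ so that the maximum principle applies. This is routine given $v\to0$ and the monotonicity of $v$, but it must be done before the comparison argument can be used.
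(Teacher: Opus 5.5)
The paper gives no proof of this lemma (it is quoted as a standard decay estimate), and your argument is exactly that standard proof and is correct: comparison with the supersolution $Me^{-\kappa r}$ on $[R_0,\infty)$ via the maximum principle, then integrating $(r^{N-1}v_r)'=r^{N-1}(\eta-c)v$ from $r$ to $\infty$, then reading off $v_{rr}$ from the equation. The one step you should state properly is $\lim_{r\to\infty}r^{N-1}v_r=0$: monotonicity only yields a limit $L\le 0$, and integrability of the right-hand side does not by itself force $L=0$; instead note that $L<0$ would give $|v_r|\ge |L|r^{1-N}$ on $[R_0,\infty)$, so $v(r)=\int_r^\infty|v_r|\,ds$ would be infinite when $N=2$ and at least $\frac{|L|}{N-2}r^{2-N}$ when $N\ge 3$, contradicting the exponential bound of Step 2 (incidentally, $v_r\to0$ in Step 1 already follows from $|v_r|\le Cr^{1-N}$ and is not needed for the comparison).
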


	\begin{lemma} \label{lemlimitofk}
		Let
		\begin{align*}
			k(r) := -\frac{v_r(r)}{v(r)}.
		\end{align*}
		Then, as $r \to \infty$,
		\begin{align*}
			k(r) = \sqrt{\eta} + \frac{N-1}{2r} + O(r^{-2}), \quad k_r(r) = -\frac{N-1}{2r^2} + O(r^{-3}).
		\end{align*}
	\end{lemma}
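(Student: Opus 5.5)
The plan is a Riccati-type argument for $k=-v_r/v$, using the exponential decay from the preceding lemma. Write the radial equation as
\begin{align*}
v_{rr} + \frac{N-1}{r}v_r = \eta v - (1-\theta)v^p - \theta v^q .
\end{align*}
Dividing by $v$ and using $v_{rr}/v = k^2 - k_r$, we get
\begin{align*}
k_r = k^2 - \frac{N-1}{r}k - \eta + g(r), \qquad g(r) := (1-\theta)v^{p-1}+\theta v^{q-1}.
\end{align*}
By the previous lemma, $g(r) = O(e^{-\kappa(q-1) r})$ for some $\kappa>0$, so $g$ is negligible compared with any power of $r^{-1}$.

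The first step is to show that $k$ is bounded and $k(r)\to\sqrt{\eta}$. Since $u'<0$, we have $k>0$. Suppose $k(r_0)>\sqrt{\eta}+\delta$ at some large $r_0$. Then $k^2-\frac{N-1}{r}k-\eta\ge c\,k^2$ whenever $k$ is large, and the Riccati inequality forces $k$ to blow up in finite $r$, a contradiction. So $k$ is bounded. The same comparison shows that once $k$ exceeds $\sqrt{\eta}+\delta$ it must keep increasing, so $\limsup_{r\to\infty} k\le\sqrt{\eta}$. Conversely, if $k<\sqrt{\eta}-\delta$ on a long interval, then $k_r\le -c<0$ there, and $k$ would become negative, which is impossible. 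Combining these with a standard phase-line argument for $\dot k = k^2-\eta+o(1)$ gives $k\to\sqrt{\eta}$; only the repelling root $+\sqrt{\eta}$ stays admissible, since trajectories leave it unless they sit on it. In addition, $k_r\to0$ along any sequence.

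The second step is to refine the expansion. Set $k=\sqrt{\eta}+h$ with $h\to0$, so that
\begin{align*}
h_r = 2\sqrt{\eta}\,h + h^2 - \frac{N-1}{r}\left(\sqrt{\eta}+h\right) + g .
\end{align*}
This is a linear equation with an unstable coefficient $2\sqrt{\eta}>0$, so the only solution that stays bounded is obtained by integrating from infinity:
\begin{align*}
h(r) = -\int_r^\infty e^{-2\sqrt{\eta}(s-r)}\left[h^2 - \frac{N-1}{s}\left(\sqrt{\eta}+h\right) + g\right]ds .
\end{align*}
We justify this using that $h$ is bounded, since then $e^{-2\sqrt{\eta}r}h(r)\to0$. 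A first pass gives $h=O(1/r)$. A second pass, using $\int_r^\infty e^{-2\sqrt{\eta}(s-r)}s^{-1}ds = \frac{1}{2\sqrt{\eta}\,r}+O(r^{-2})$, yields $h=\frac{N-1}{2r}+O(r^{-2})$.

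For $k_r$, differentiate the integral representation, or equivalently use $h_r = 2\sqrt{\eta}h + \ldots$ together with the sharper expansion. We push the iteration one more order: writing $h = \frac{N-1}{2r}+\frac{a}{r^2}+O(r^{-3})$ and then $h_r = -\frac{N-1}{2r^2}+O(r^{-3})$. The cleanest route is to note that $\phi := h-\frac{N-1}{2r}$ satisfies the same type of unstable equation with forcing $O(r^{-2})$ whose derivative is $O(r^{-3})$. Integrating from infinity then gives $\phi_r = O(r^{-3})$.

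The main obstacle I expect is the first step, namely rigorously showing $k\to\sqrt{\eta}$ and excluding oscillation or convergence to other values. The bootstrap expansions after that are routine.
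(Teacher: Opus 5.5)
The paper gives no proof of this lemma; it lists it with the exponential decay bound as a ``standard'' estimate. So there is no argument of the authors to compare with. Your Riccati argument is correct and gives a self-contained proof.

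Two tightenings are worth making.

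In Step 1 no phase-line argument is needed. For $r$ large (depending on $\delta$), the regions $\{k\ge\sqrt{\eta}+\delta\}$ and $\{k\le\sqrt{\eta}-\delta\}$ are each forward invariant. In the first, $(1/k)_r\le -c$, which forces blow-up at finite $r$. In the second, $k_r\le -c$, which forces $k<0$. Hence a single entry point is already contradictory, and $\limsup k\le\sqrt{\eta}\le\liminf k$ follows at once; the ``long interval'' hypothesis is unnecessary.

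For $k_r$, use your second route, not the first. The $O(r^{-3})$ remainder in $h=\frac{N-1}{2r}+\frac{a}{r^2}+O(r^{-3})$ cannot be differentiated termwise. If you instead read $h_r$ off the ODE, you get the leading term $-\frac{N-1}{2r^2}$ only once you know $a=\frac{(N-1)(N-3)}{8\sqrt{\eta}}$; a third iteration of the integral formula does produce this value. Your $\phi=h-\frac{N-1}{2r}$ satisfies
\[
\phi_r=2\sqrt{\eta}\,\phi+\phi^2-\frac{(N-1)(N-3)}{4r^2}+g .
\]
From this, $\phi=O(r^{-2})$ and $\phi_r=O(r^{-2})$. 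The forcing has derivative $O(r^{-3})$, because $g_r=-k\big((1-\theta)(p-1)v^{p-1}+\theta(q-1)v^{q-1}\big)$ is exponentially small. Differentiating and integrating from infinity then gives $\phi_r=O(r^{-3})$ cleanly.

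Note also that this is exactly the Riccati equation for $-V_r/V-\sqrt{\eta}$ with $V=r^{(N-1)/2}v$. This is the same Liouville substitution the paper uses for $U=r^{(N-1)/2}u$ in the proof of Lemma \ref{asymptoticlemma}. We have $k=\frac{N-1}{2r}-\frac{V_r}{V}$, and $V_{rr}=\big(\eta+\frac{(N-1)(N-3)}{4r^2}-g\big)V$ has no first-order term. So if you start from $V$, the correction to $-V_r/V$ is of order $r^{-2}$ after a single bootstrap. Both expansions in the lemma then follow directly, without the intermediate $O(r^{-1})$ iteration for $h$.
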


	Let $\phi = -v_r > 0$, and let
	\begin{align*}
		L := -\Delta + \eta - (1-\theta)pv^{p-1} - \theta q v^{q-1}
	\end{align*}
	acting on $L^2(\R^N)$,
	\begin{align*}
		L_{rad} := -\frac{d^2}{dr^2} - \frac{N-1}{r}\frac{d}{dr} + \eta - (1-\theta)pv^{p-1} - \theta q v^{q-1}
	\end{align*}
	acting on $L^2(r^{N-1}dr)$. Differentiating \eqref{eqofv} with respect to $r$ we obtain
	\begin{align} \label{eqofphi}
		\left(L_{rad} + \frac{N-1}{r^2}\right)\phi = 0.
	\end{align}
 	We introduce $s = \ln r$ and
	\begin{align*}
		\rho(s) = r^{N-2}\phi(r)^2.
	\end{align*}
	Take $\xi = -\ln v$ and set
	\begin{align*}
		& F(\xi) = (1-\theta)e^{-(p-1)\xi} + \theta e^{-(q-1)\xi}, \quad S = F -\eta, \\
		& P(\xi) = -F_\xi, \quad\quad\quad\quad\quad\quad\quad J(\xi) = (1-\theta)\frac{p-1}{p+1}e^{-(p-1)\xi} + \theta \frac{q-1}{q+1}e^{-(q-1)\xi}, \\
		& m(\xi) = -\frac{P_\xi}{P}, \quad\quad\quad\quad\quad\quad\quad q-1 < m < p-1.
	\end{align*}

	Let
	\begin{align*}
		k = -\frac{v_r}{v}, \quad~~ h = \frac{k}{r}, \quad~~ y = rk, \quad~~ z = -\frac{rh_r}{h} = 1 - \frac{k_r}{h}, \quad~~ T = N - y - z.
	\end{align*}
	We have
	\begin{align}
		& d(s) := (\ln \rho)_s = \frac{r\rho_r}{\rho} = N - 2(y+z), \\
		& y_s = ry_r = y(2-z) = y^2 + y - \frac{r^2v_{rr}}{v}, \label{ys}\\
		& z_s = rz_r = r^2\frac{h_r}{h}\frac{k_r}{k} - \frac{r^2k_{rr}}{k} = r^2P - zT - y(2-z). \label{zs}
 	\end{align}

	\begin{lemma} \label{lemestimate}
		Suppose $N \ge 2$ and $1 < q < p < 2^* -1$. Then
		\begin{align} \label{usefulestimate1}
			y_r > 0, \quad h_r < 0, \quad J - h < k_r < J
		\end{align}
		for $r > 0$.
	\end{lemma}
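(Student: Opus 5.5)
The plan is to rewrite all three inequalities in \eqref{usefulestimate1} as sign conditions on two auxiliary functions. Each of them satisfies a first-order linear ODE with an explicit integrating factor. The signs will then follow from the boundary behaviour at $r=0$ and $r=\infty$ (Lemma \ref{lemlimitofk}) together with a first-touching-point argument.

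\textbf{Step 1: the equation for $k$ and the link to the energy.} From $v_r/v=-k$ we get $v_{rr}/v=k^2-k_r$, and \eqref{eqofv} becomes the Riccati equation
\begin{align*}
k_r = k^2-(N-1)h+S .
\end{align*}
Put $G(v)=(1-\theta)\frac{v^{p+1}}{p+1}+\theta\frac{v^{q+1}}{q+1}$ and $E=\frac12 v_r^2+G(v)-\frac{\eta}{2}v^2$. Then $E_r=-\frac{N-1}{r}v_r^2<0$ and $E\to 0$ at infinity, so $E>0$. Since $F-J=2G/v^2$, the function $g:=k_r-J$ satisfies $g=2E/v^2-(N-1)h$. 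Differentiating, the terms in $k^2$ cancel and
\begin{align*}
g_r = 2kg-(N-1)h_r, \qquad rh_r = k_r-h = g+J-h .
\end{align*}
Using the integrating factor $r^{N-1}v^2$ (recall $(v^2)_r=-2kv^2$), this gives
\begin{align*}
\big(r^{N-1}v^2 g\big)_r=(N-1)r^{N-2}v^2\,(h-J).
\end{align*}

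\textbf{Step 2: boundary behaviour.} As $r\to0$, $k=\frac{1-\eta}{N}r+O(r^3)$, so $h\to\frac{1-\eta}{N}$ and $J\to(1-\theta)\frac{p-1}{p+1}+\theta\frac{q-1}{q+1}$. Also $r^{N-1}v^2g\to0$, and the local signs of $g$, of $h_r$ and of $g+h$ near $0$ follow from the Taylor expansion of $v$. As $r\to\infty$, Lemma \ref{lemlimitofk} gives $k_r\sim-\frac{N-1}{2r^2}$, $h\sim\sqrt\eta/r$, $rh_r\sim-\sqrt\eta/r$, and $J\to0$ exponentially. Hence $g<0$ and $g+h>0$ for large $r$, and $r^{N-1}v^2g\to0$.

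\textbf{Step 3: $h_r<0$.} This is equivalent to $k_r<h$, i.e. $y_s<2y$ in the variables of \eqref{ys}. I would argue by contradiction at the first $r_0$ where $h_r(r_0)=0$. Differentiating $rh_r=k^2-Nh+S$ once more at $r_0$ gives $r_0h_{rr}(r_0)=2kk_r+S_r=2kh-kP$ there (using $S_r=-F_\xi\xi_r\cdot(-1)$ appropriately, i.e. $S_r=-kP$). The sign is then controlled by comparing $2h$ with $P$ through the identity for $g$ from Step 1 and the bound $m<p-1$. I expect this to be the main obstacle. It is where the structure of the nonlinearity ($F$ decreasing and convex in $\xi$, i.e. $f(v)/v$ increasing) must really be used, and a single touching-point computation may not close. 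In that case I would combine it with the integrated identity of Step 1, integrated from $r_0$ to $\infty$, to exclude a second critical point of $h$.

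\textbf{Step 4: $g<0$ and $g+h>0$.} Once $h_r<0$ is known, $g+J-h=rh_r<0$, so the upper inequality $k_r<J$ would follow directly if $J\le h$. In general I would instead integrate the identity of Step 1 from $r$ to $\infty$:
\begin{align*}
r^{N-1}v^2g(r)=-(N-1)\int_r^\infty t^{N-2}v^2(h-J)\,dt .
\end{align*}
A sign change of $g$ is then excluded by a first-crossing argument. At a zero $r_1$ of $g$ one has $g_r(r_1)=-(N-1)h_r(r_1)>0$, so $g$ can only cross zero upward. This is incompatible with $g<0$ near infinity unless $g<0$ everywhere, once the sign at $0^+$ is checked. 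For the lower bound, set $f=g+h=k_r-J+h$. Then
\begin{align*}
f_r=2kg-(N-1)h_r+h_r=2k(f-h)-(N-2)h_r ,
\end{align*}
and at a first zero of $f$ one gets $f_r=-2kh-(N-2)h_r$. This needs the quantitative bound $rh_r>-\frac{2}{N-2}\,y h$, which I would derive from $y_r>0$ in a bootstrap. Alternatively I would use $g=2E/v^2-(N-1)h$, which gives $f=2E/v^2-(N-2)h$, and show $2E>(N-2)hv^2$ by comparing the derivatives of both sides. Finally, $y_r=k+rk_r=r(h+k_r)>r(J)>0$ follows from $f>0$.
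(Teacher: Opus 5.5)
Your Step 1 identities are correct. Once $h_r<0$ is available, your proof of $k_r<J$ is valid and cleaner than the paper's $\p_*$-argument. At any zero of $g$ one has $g_r=-(N-1)h_r>0$, so every zero is a strict upward crossing, and since $g<0$ near infinity, $g$ cannot vanish at all; no information at $0^+$ is needed.

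The first gap is that $h_r<0$ is never proved. The touching-point identity $r_0h_{rr}(r_0)=k(2h-P)$ is right, but at the first zero of $h_r$ after an interval where $h_r<0$ it only gives $2h-P\ge0$, which is no contradiction. Integrating the $g$-identity does not help either: it involves $J$, not $P$, and gives no handle on $2h-P$. The missing idea is that $Q:=2h-P(\xi)$ satisfies $Q_r=2h_r+kmP$ with $m=-P_\xi/P>0$. It is $m>0$ that matters here, not $m<p-1$. Hence $Q$ is strictly increasing wherever $h_r\ge0$. Together with $h_{rr}=hQ$ at zeros of $h_r$, this gives a component argument on $\{h_r>0\}$. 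For a component $(\alpha,\beta)$:
\begin{itemize}
\item $\beta<\infty$ because $h\to0$;
\item $Q(\alpha)\ge0$, either from $h_{rr}(\alpha)\ge0$ or, if $\alpha=0$, because $h_r=\frac{h_0(2h_0-P(0))}{N+2}r+o(r)$ excludes $Q(0)<0$;
\item hence $Q(\beta)>0$ and $h_{rr}(\beta)>0$, contradicting $h_{rr}(\beta)\le0$.
\end{itemize}
A degenerate zero of $h_r$ is then ruled out by $h_{rrr}=hQ_r>0$ there. Relatedly, your Step 2 claim that the signs of $h_r$, $g$, $g+h$ near $0$ follow from the Taylor expansion is not correct. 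They are governed by $2h_0-P(0)$, $h_0-J(0)$ and $2h_0-J(0)$ with $h_0=(1-\eta)/N$, and $\eta$ is a global quantity of the solution.

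The second gap is the lower bound $k_r>J-h$, and hence $y_r>0$, which is left open for $N\ge3$. For $N=2$ your identity $f=2E/v^2$ settles it at once, since $E>0$. Your first route needs $rh_r>-\frac{2}{N-2}yh$ at zeros of $f$. Deriving that from $y_r>0$ is circular, because you obtain $y_r>0$ from $f>0$. Your second route is the right reformulation, since $f>0$ is equivalent to $\p_0:=2r^NE+(N-2)r^{N-1}vv_r>0$. However, comparing the derivatives of $2E$ and $(N-2)hv^2$ directly produces $-v^2\big(2kh+(N-2)h_r\big)$, which has no sign. What is needed is the weighted Pohozaev computation
\[
\p_0'=r^{N-1}v^2\Big(-2\eta+(1-\theta)\tfrac{N+2-p(N-2)}{p+1}v^{p-1}+\theta\tfrac{N+2-q(N-2)}{q+1}v^{q-1}\Big).
\]
The bracket is decreasing in $r$ precisely because $p,q<2^*-1$, so it changes sign at most once. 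Then $\p_0(0)=0=\lim_{r\to\infty}\p_0(r)$ forces $\p_0>0$, i.e. $y_r>rJ$. This argument does not need $h_r<0$, so it should come first. Your proposal never uses the subcriticality assumption, which is a symptom that this step was not closed.
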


	\begin{proof}
		Let
		\begin{align*}
			\p_0 = r^N\left(v_r^2 + 2\mathcal{F}(v)\right) + (N-2)r^{N-1}vv_r
		\end{align*}
		where
		\begin{align*}
			\mathcal{F}(t) = -\frac{\eta}{2}t^2 + \frac{1-\theta}{p+1}t^{p+1} + \frac{\theta}{q+1}t^{q+1}.
		\end{align*}
		From direct computations we derive that
		\begin{align*}
			\p_0'(r) = r^{N-1}v^2\left(-2\eta + (1-\theta)\frac{N+2-p(N-2)}{p+1}v^{p-1} + \theta\frac{N+2-q(N-2)}{q+1}v^{q-1}\right).
		\end{align*}
		Since $v(r)$ is decreasing in $r > 0$ and $q < p < 2^* -1$, the function
		\begin{align*}
			-2\eta + (1-\theta)\frac{N+2-p(N-2)}{p+1}v(r)^{p-1} + \theta\frac{N+2-q(N-2)}{q+1}v(r)^{q-1}
		\end{align*}
		is decreasing in $r > 0$ and has at most one zero point. Furthermore, using $\p_0(0) = 0$ and $\p_0(r) \to 0$ as $r \to \infty$, we obtain $\p_0(r) > 0$ for all $r > 0$. On the other hand,
		\begin{align*}
			\p_0 = r^{N-2}v^2\left(ry_r - r^2J\right).
		\end{align*}
		Therefore,
		\begin{align*}
			y_r > rJ > 0.
		\end{align*}

		Next we prove $h_r < 0$. Since $v(0) = 1$, we have
		\begin{align*}
			h(0) = - \lim_{r \to 0}\frac{v_r(r)}{r} = -v_{rr}(0) =: h_0 > 0.
		\end{align*}
		Using \eqref{eqofv} we get
		\begin{align*}
			-N v_{rr}(0) = (1-\theta)v(0)^{p} + \theta v(0)^{q} - \eta v(0) = 1 - \eta.
		\end{align*}
		Thus $h_0 = (1-\eta)/N$. By the regularity and radial symmetry of $v(x)$, we can write
		\begin{align*}
			h(r) = h_0 + \mu r^2 + o(r^2), \quad h_r(r) = 2\mu r + o(r).
		\end{align*}
		Using $\xi_r = rh$, we get
		\begin{align*}
			\xi(r) = \int_0^r sh(s)ds = \int_0^r s\left(h_0 + \mu s^2 + o(s^2)\right)ds = \frac{h_0}{2}r^2 + O(r^4).
		\end{align*}
		Moreover, $F_\xi(0) = -P(0)$. Thus
		\begin{align*}
			F(\xi) = F(0) + F_\xi(0)\xi + o(\xi) = 1 - P(0)\xi  + o(\xi) = 1 - \frac{h_0P(0)}{2}r^2 + o(r^2).
		\end{align*}
		By $v_r/v = -rh$, we have
		\begin{align*}
			\frac{v_{rr}}{v} = -h - rh_r + r^2h^2.
		\end{align*}
		On the other hand,
		\begin{align*}
			-v_{rr} - \frac{N-1}{r}v_r + \eta v = F(\xi)v.
		\end{align*}
		Therefore, we can derive that
		\begin{align} \label{eqofh_r}
			rh_r = r^2h^2 + F -\eta - Nh.
		\end{align}
		Note that
		\begin{align*}
			& rh_r = r(2\mu r + o(r)) = 2\mu r^2 + o(r^2), \\
			& r^2h^2 = r^2(h_0 + \mu r^2 + o(r^2))^2 = h_0^2 r^2 + o(r^2),
		\end{align*}
		and
		\begin{align*}
			F - \eta - Nh & = 1 - \frac{h_0P(0)}{2}r^2 + o(r^2) - \eta - N (h_0 + \mu r^2 + o(r^2)) \\
			& = 1 - \eta - Nh_0 - \frac{h_0P(0) + 2N\mu}{2}r^2 + o(r^2) \\
			& = - \frac{h_0P(0) + 2N\mu}{2}r^2 + o(r^2).
		\end{align*}
		Comparing the coefficient of $r^2$, one can see that
		\begin{align*}
			2\mu = h_0^2 - \frac{h_0P(0) + 2N\mu}{2},
		\end{align*}
		namely
		\begin{align*}
			\mu = \frac{h_0(2h_0 - P(0))}{2(N+2)}.
		\end{align*}
		Thus
		\begin{align} \label{eqextending}
			h(r) = h_0 + \frac{h_0(2h_0 - P(0))}{2(N+2)} r^2 + o(r^2), \quad h_r(r) = \frac{h_0(2h_0 - P(0))}{N+2} r + o(r).
		\end{align}

		We claim that $2h_0 \le P(0)$. If it is not in this case, when $r > 0$ is small enough we have $h'(r) > 0$. On the other hand, as $r \to \infty$,
		\begin{align*}
			0 < h(r) = -\frac{v_r(r)}{rv(r)} \le \frac{C}{r} \to 0.
		\end{align*}
		This yields the existence of $R > 0$ such that
		\begin{align*}
			h_r(r) > 0 ~~ (0 < r < R), \quad\quad h_r(R) = 0.
		\end{align*}
		We also have $h_{rr}(R) \le 0$. Now we introduce
		\begin{align*}
			Q(r) := 2h(r) - P(\xi(r)).
		\end{align*}
		Direct computations show that
		\begin{align*}
			Q_r = 2h_r - P_\xi \xi_r = 2h_r - rh P_\xi.
		\end{align*}
		Since $h > 0$ and $P_\xi < 0$, we get $Q_r > 0$ in $r \in (0,R)$ and so $Q(R) > Q(0) = 2h_0 - P(0) > 0$. Differentiating \eqref{eqofh_r} with respect to $r$, we deduce that
		\begin{align*}
			rh_{rr} + h_r = 2r^2hh_r + 2rh^2 + F_\xi \xi_r - Nh_r = 2r^2hh_r + 2rh^2 - rhP - Nh_r,
		\end{align*}
		namely
		\begin{align} \label{eqofhrr}
			h_{rr} = hQ + \left(2rh - \frac{N+1}{r}\right)h_r.
		\end{align}
		This shows $h_{rr}(R) = h(R)Q(R) > 0$, a contradiction! Thus the claim holds true.

		In the following we aim to prove $h_r \le 0$ for $r > 0$. Suppose by contradiction that the set
		\begin{align*}
			E = \big\{r > 0: h_r(r) > 0\big\}
		\end{align*}
		is not empty. Take a connected component $(\alpha,\beta) \subset E$. It is clear that $h_r(r) > 0$ for $\alpha < r < \beta$. Since $h > 0$ and $h(r) \to 0$ as $r \to \infty$, we have $\beta < \infty$. At the right endpoint $r = \beta$,
		\begin{align*}
			h_r(\beta) = 0, \quad\quad h_{rr}(\beta) \le 0.
		\end{align*}
		If $\alpha > 0$, at the left endpoint $r = \alpha$ we also have
		\begin{align*}
			h_r(\alpha) = 0, \quad\quad h_{rr}(\alpha) \ge 0.
		\end{align*}
		Using \eqref{eqofhrr} we get
		\begin{align*}
			h_{rr}(\alpha) = h(\alpha)Q(\alpha).
		\end{align*}
		Together with $h(\alpha) > 0$ we obtain $Q(\alpha) \ge 0$. Reasoning as before we have
		\begin{align*}
			Q(\beta) > Q(\alpha) \ge 0.
		\end{align*}
		Using \eqref{eqofhrr} again,
		\begin{align*}
			h_{rr}(\beta) = h(\beta)Q(\beta) > 0,
		\end{align*}
		contradicting $h_{rr}(\beta) \le 0$. If $\alpha = 0$, we have proved that $Q(0) = 2h_0 - P(0) \le 0$. If $Q(0) < 0$, by \eqref{eqextending} one gets $h_r(r) < 0$ near $r = 0$, implying that $(0,\beta) \not\subset E$. Therefore, $Q(0) = 0$ and it still holds that
		\begin{align*}
			Q(\beta) > Q(0) = 0
		\end{align*}
		and
		\begin{align*}
			h_{rr}(\beta) = h(\beta)Q(\beta) > 0
		\end{align*}
		a contradiction! Thus the set $E$ must be empty, namely $h_r \le 0$ in $r > 0$.

		Further, we will eliminate the possibility that $h_r$ has a zero point. Arguing by contradiction, we assume that $h_r(R) = 0$ for some $R > 0$. Together with $h_r \le 0$ we get $h_{rr}(R) = 0$. By \eqref{eqofhrr} and using $h(R) > 0$ we have $Q(R) = 0$. At $r = R$,
		\begin{align*}
			Q_r(R) = 2h_r(R) - Rh(R)P_\xi(\xi(R)) = - Rh(R)P_\xi(\xi(R)) > 0.
		\end{align*}
		Differentiating \eqref{eqofhrr} with respect to $r$, we deduce
		\begin{align*}
			h_{rrr} = h_rQ + hQ_r + A_r(r)h_r + A(r)h_{rr}
		\end{align*}
		where
		\begin{align*}
			A(r) = 2rh - \frac{N+1}{r}.
		\end{align*}
		At $r = R$,
		\begin{align*}
			h_{rrr}(R) = h(R)Q_r(R) > 0.
		\end{align*}
		Expanding $h_r$ at $r = R$ we have
		\begin{align*}
			h_r(R + \ep) = h_r(R) + h_{rr}(R)\ep + \frac12 h_{rrr}(R)\ep^2 + o(\ep^2) = \frac12 h_{rrr}(R)\ep^2 + o(\ep^2).
		\end{align*}
		For small $\ep > 0$ we derive that $h_r(R + \ep) > 0$, contradicting $h_r \le 0$. This contradiction yields that $h_r < 0$ for all $r > 0$.

		Finally, by
		\begin{align*}
			y_r = (rk)_r = r k_r + k = rk_r + rh > rJ
		\end{align*}
		we get $k_r > J - h$. It remains to prove $k_r < J$. Let
		\begin{align*}
			\p_* = r^N\left(v_r^2 + 2\mathcal{F}(v)\right) + (N-1)r^{N-1}vv_r.
		\end{align*}
	    Then we have
		\begin{align*}
			\p_* = r^Nv^2 \left(k_r - J\right), \quad \p_*'(r) = r^{N-1}v^2 \Sigma
		\end{align*}
		where
		\begin{align*}
			\Sigma(r) = k^2 - \eta + (1-\theta)\frac{N+1-p(N-1)}{p+1}v^{p-1} + \theta\frac{N+1-q(N-1)}{q+1}v^{q-1}.
		\end{align*}
		We first prove $\p_* \le 0$ in $r > 0$. Otherwise, since $\p_*(0) = 0$ and $\p_*(r) \to 0$ as $r \to \infty$, we can assume the existence of $R > 0$ such that
		\begin{align*}
			\p_*(R) > 0, \quad\quad \p_*'(R) = 0.
		\end{align*}
		Clearly, $\Sigma(R)  = 0$. Since
		\begin{align*}
			k_r = k^2 - \frac{v_{rr}}{v} = k^2 - \eta + (1-\theta)v^{p-1} + \theta v^{q-1} - (N-1)h,
		\end{align*}
		at $r = R$ we get
		\begin{align*}
			k_r(R) = N J(\xi(R)) - (N-1)h(R), \quad Rh_r(R) = k_r(R) - h(R) = N\left(J(\xi(R)) - h(R)\right).
		\end{align*}
		By $h_r < 0$ we get $h(R) > J(\xi(R))$. Thus
		\begin{align*}
			\p_*(R) = (N-1)R^{N}v(R)^2\left(J(\xi(R)) - h(R)\right) < 0,
		\end{align*}
		a contradiction!

		Now we suppose by contradiction that there exists $r_0 > 0$ such that $\p_*(r_0) = 0$. In view of $\p_* \le 0$, we know $\p_*'(r_0) = 0$ immediately. Arguing as before, we get $\p_*(r_0) < 0$ and find a contradiction. This shows $\p_* < 0$, namely $k_r < J$ in $r > 0$. The proof is complete.
	\end{proof}

	\begin{lemma} \label{lemsestimate}
		Suppose $N \ge 2$ and $1 < q < p < 2^* -1$. Let
		\begin{align} \label{def:beta}
			\beta = \frac{(N-1)(p-1)}{p+3} \in (0,1).
		\end{align}
		Then
		\begin{align} \label{usefulestimate2}
			k_r > J - \beta h \quad \text{ in } r > 0.
		\end{align}
	\end{lemma}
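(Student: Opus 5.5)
The natural plan is to interpolate between the two Pohozaev-type functionals $\p_0$ and $\p_*$ used in the proof of Lemma \ref{lemestimate}. For $c \in \R$, set
\begin{align*}
\p_c := r^N\left(v_r^2 + 2\mathcal{F}(v)\right) + c\, r^{N-1}vv_r .
\end{align*}
Exactly as for $\p_0$ (where $c=N-2$) and $\p_*$ (where $c=N-1$), a direct computation gives
\begin{align*}
\p_c = r^N v^2\left(k_r - J + (N-1-c)h\right).
\end{align*}
Hence \eqref{usefulestimate2} is equivalent to $\p_c>0$ on $(0,\infty)$ with $c = N-1-\beta$. Since $\beta\in(0,1)$, this $c$ lies strictly between the two values already treated.

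Next I will differentiate. Using $v_{rr} = -\frac{N-1}{r}v_r - f(v)$ with $f = \mathcal{F}'$, one finds
\begin{align*}
\p_c' = r^{N-1}v^2\Big[(1-\beta)k^2 - \eta\big(N - c\big) + (1-\theta)\Big(\tfrac{2N}{p+1}-c\Big)v^{p-1} + \theta\Big(\tfrac{2N}{q+1}-c\Big)v^{q-1}\Big].
\end{align*}
The boundary behaviour is $\p_c(0)=0$, and $\p_c(r)\to0$ as $r\to\infty$ by the exponential decay lemma.

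I will then argue by contradiction, as for $\p_*$. If $\p_c\le 0$ somewhere, there is an interior point $R$ with $\p_c(R)\le0$ and $\p_c'(R)=0$; this covers both a negative minimum and a touching zero. At $R$ the bracket vanishes, which expresses $(1-\beta)k^2$ through $\eta$ and the powers $v^{p-1}, v^{q-1}$. I substitute this into the identity
\begin{align*}
k_r = k^2 - \eta + (1-\theta)v^{p-1} + \theta v^{q-1} - (N-1)h,
\end{align*}
so that $k_r - J + \beta h$ at $R$ becomes a combination of $k^2$, $h$, $\eta$ and the two powers of $v$. I then control $h$ and $k$ using Lemma \ref{lemestimate}: $k_r<J$, $h_r<0$ (so $k_r<h$), and $k_r>J-h$. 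In particular, a relation $Rh_r(R)=k_r(R)-h(R)<0$ will be combined with the critical-point relation. The goal is to reduce everything to a single inequality in the quantity $\frac{k_r - J}{h}$ at $R$, and show it forces $k_r - J + \beta h > 0$, i.e. $\p_c(R) > 0$. This contradicts $\p_c(R)\le 0$.

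The main obstacle is this last algebraic step. The two nonlinear terms carry different exponents, and $\beta$ has to be chosen so that the $p$-term, which is the worst one since $m<p-1$, is dominated uniformly in $\theta$ and $\eta$. The specific value $\beta = (N-1)(p-1)/(p+3)$ suggests the following. The $k^2$-term with factor $(1-\beta)$ must absorb the $J$ and $F$ contributions via a quadratic (AM--GM) inequality of the type $k^2 \ge 2kh\cdot(\cdot) - (\cdot)^2$. The coefficients $\frac{p-1}{p+1}$ in $J$ versus $1$ in $F$ then produce the factor $p+3 = (p+1)+2$.

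If the direct substitution leaves a residual sign-indefinite $q$-term, I will use the monotonicity $\frac{q-1}{q+1}<\frac{p-1}{p+1}$ to bound it by the $p$-coefficient. As a fallback, I will replace $\p_c$ by a weighted version $r^{a}v^{b}\p_c$, tuning $a$ and $b$ so that the $k^2$ coefficient in the derivative produces exactly this factor.
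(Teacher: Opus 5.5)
Your functional and your derivative formula are both correct and coincide with the paper's: with $c=N-1-\beta$, $\p_c=r^Nv^2(k_r+\beta h-J)$ and $\p_c'=r^{N-1}v^2\mathfrak{L}$ with $\mathfrak{L}=(1-\beta)k^2+(1+\beta)S-NJ$. The gap is in the closing step. It cannot be carried out with the information you list.

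At a point $R$ with $\mathfrak{L}(R)=0$, put $K:=k^2+S=\frac{NJ-2\beta S}{1-\beta}$. Then $k_r=K-(N-1)h$ and
\begin{align*}
k_r+\beta h-J=(K-J)-(N-1-\beta)h \quad\text{at } r=R,
\end{align*}
which is decreasing in $h(R)$. So you need the upper bound $h<(K-J)/(N-1-\beta)$. Compare this with what Lemma \ref{lemestimate} gives:
\begin{itemize}
\item $k_r<J$ and $k_r<h$ (the latter is $h_r<0$, the relation you single out) become $h>(K-J)/(N-1)$ and $h>K/N$. These are lower bounds, the wrong direction. This is exactly why $h_r<0$ worked for $\p_*$: there a critical value of the opposite sign was needed.
\item The only upper bound comes from $k_r>J-h$. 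It gives $h<(K-J)/(N-2)$ for $N\ge3$, and nothing for $N=2$.
\item Since $N-1-\beta>N-2$, this is strictly weaker than the bound you need.
\end{itemize}
Indeed, when $S(R)\le0$ the admissible window for $h(R)$ contains values with $\p_\beta(R)\le0$. So the first-order condition plus these inequalities yields no contradiction, and the AM--GM heuristic and the weighted fallback do not address this.

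The missing idea is to use second-order information: differentiate the bracket instead of evaluating it. Since $\xi_r=k$, you get $k^{-1}\mathfrak{L}_r=2(1-\beta)k_r+(1+\beta)S_\xi-NJ_\xi$. Then $k_r<J$ bounds this strictly by
\begin{align*}
(1-\theta)\frac{p-1}{p+1}\big((N-1)(p-1)-(p+3)\beta\big)v^{p-1}+\theta\frac{q-1}{q+1}\big((N-1)(q-1)-(q+3)\beta\big)v^{q-1}.
\end{align*}
The choice $\beta=(N-1)(p-1)/(p+3)$ kills the first coefficient. The second is negative because $t\mapsto(N-1)(t-1)/(t+3)$ is increasing. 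Hence $\mathfrak{L}$ is strictly decreasing.

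This is where the factor $p+3$ comes from: $-2\beta$ from the $2(1-\beta)kk_r$ term and $-(p+1)\beta$ from $(1+\beta)S_\xi$. It does not come from an AM--GM step.

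Because $\mathfrak{L}$ is strictly decreasing, $\p_\beta'$ changes sign at most once, from $+$ to $-$. Together with $\p_\beta(0)=0=\lim_{r\to\infty}\p_\beta$, this forces $\p_\beta>0$. Equivalently, in your contradiction setup, an interior minimum $R$ would need $\mathfrak{L}(R)=0$ and $\mathfrak{L}_r(R)\ge0$, which is impossible.
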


	\begin{proof}
		Let
		\begin{align*}
			\p_\beta = r^N v^2 (k_r + \beta h - J).
		\end{align*}
		By direct computations we derive that
		\begin{align*}
			\p_\beta'(r) = r^{N-1}v^2 \mathfrak{L}, \quad \mathfrak{L} = (1-\beta)k^2 + (1+\beta)S - NJ.
		\end{align*}
		Using $k_r < J$ we obtain
		\begin{align*}
			& ~~~ k^{-1}\mathfrak{L}_r  < 2(1-\beta)J + (1+\beta)S_\xi - N J_\xi \\
			& = (1-\theta)\frac{p-1}{p+1}\left((N-1)(p-1) - (p+3)\beta\right)v^{p-1} + \theta\frac{q-1}{q+1}\left((N-1)(q-1) - (q+3)\beta\right)v^{q-1} < 0,
		\end{align*}
		where we have used
		\begin{align*}
			\frac{(N-1)(q-1)}{q+3} < \frac{(N-1)(p-1)}{p+3} = \beta.
		\end{align*}
		Together with $\p_\beta(0) = 0$ and $\p_\beta(r) \to 0$ as $r \to \infty$, we get $\p_\beta > 0$, namely $k_r > J - \beta h$. The proof is complete.
	\end{proof}

	\section{Transformation to a one-dimensional spectral problem} \label{sectransformation}

	When $N \ge 3$, we transform the estimate of Morse index into a spectral lower bound estimate problem, and further transform it into proving $\G > 0$ by choosing a suitable test function, where $\G$ is given in \eqref{def:G}. When $N \ge 6$, $1 < q < p < 2^*-1$, or $2 \le N \le 5$, $1 < q < p \le p_{\text{alg}}(N)$, or $2 \le N \le 5$, $1 < q < p < 2^*-1$, $p - q \le 1$, we prove $\G > 0$ in Proposition \ref{propG>01} and Proposition \ref{propG>02}. In the coming section, we will address the more challenging case when $N = 5$, $1 < q < p < 7/3$.

	\begin{lemma} \label{lemtransformation}
		Suppose $N \ge 3$. For any radial function $f \in C_c^\infty(\R^N)$, let $f(r) = f(x)$ with $r = |x|$ and rewrite $f(r) = \phi(r)g(r)$. Then
		\begin{align} \label{indentity}
			Q_L(f) = \omega_{N-1} \int_{\R} \rho(s)\left(g_s^2 - (N-1)g^2\right)ds,
		\end{align}
		where $\omega_{N-1} = |\mathbb{S}^{N-1}|$ and
		\begin{align*}
			Q_L(f) := \int_{\R^N}\left(|\nabla f|^2 + \eta f^2 - (1-\theta)pv^{p-1}f^2 - \theta qv^{q-1}f^2\right)dx.
		\end{align*}
		The identity \eqref{indentity} can be extended to the case of $f \in H^1_{rad}(\R^N)$.
	\end{lemma}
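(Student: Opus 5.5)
We substitute $f=\phi g$ into the quadratic form and integrate by parts against the equation \eqref{eqofphi} for $\phi$; this is the classical ground-state (Picone) substitution. Since $\phi=-v_r>0$ for $r>0$, any radial $f\in C_c^\infty(\R^N)$ can be written as $f=\phi g$ with $g=f/\phi$ smooth on $(0,\infty)$. For $g$ we only know that it has compact support away from infinity. Near $r=0$ we have $\phi(r)\sim h_0 r$, so $g\sim f(0)/(h_0 r)$ may blow up. The computation is therefore done on $(\ep,\infty)$, and we let $\ep\to0$ at the end.

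\emph{Step 1 (algebra).} In radial coordinates $Q_L(f)=\omega_{N-1}\int_0^\infty\bigl(f_r^2+Vf^2\bigr)r^{N-1}dr$, where $V=\eta-(1-\theta)pv^{p-1}-\theta qv^{q-1}$. Expand
\[
f_r^2=\phi^2g_r^2+2\phi\phi_r g g_r+\phi_r^2g^2,
\]
and integrate the cross term by parts: $\int_\ep^\infty \phi\phi_r (g^2)_r r^{N-1}dr=-\int_\ep^\infty g^2(r^{N-1}\phi\phi_r)_r\,dr+B_\ep$, where $B_\ep=-\ep^{N-1}\phi\phi_r g^2|_{r=\ep}$. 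By \eqref{eqofphi}, $(r^{N-1}\phi_r)_r=r^{N-1}\bigl(V+\frac{N-1}{r^2}\bigr)\phi$. The $\phi_r^2g^2$ and $V\phi^2g^2$ terms then cancel, leaving
\[
\int_\ep^\infty r^{N-1}\phi^2\Bigl(g_r^2-\frac{N-1}{r^2}g^2\Bigr)dr+B_\ep.
\]
Now change variables $s=\ln r$. Since $g_r=g_s/r$ and $dr=r\,ds$, we get $r^{N-1}\phi^2 g_r^2\,dr=r^{N-2}\phi^2g_s^2\,ds=\rho g_s^2\,ds$, and likewise for the second term. This gives $\int_{\ln\ep}^\infty\rho\,(g_s^2-(N-1)g^2)\,ds+B_\ep$.

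\emph{Step 2 (boundary term; the main obstacle).} We must show $B_\ep\to0$ and that the $s$-integral converges on all of $\R$; this is where $N\ge3$ enters. Near $0$ we have $\phi\sim h_0 r$, $\phi_r\to h_0$ and $g\sim f(0)/(h_0r)$, so $\phi\phi_r g^2=O(r^{-1})$ and hence $B_\ep=O(\ep^{N-2})\to0$. For integrability, $\rho g^2=r^{N-2}f^2=O(r^{N-2})$, which decays like $e^{(N-2)s}$ as $s\to-\infty$. Also $g_s=r g_r=r(f_r\phi-f\phi_r)/\phi^2=O(r^{-1})$, because $f_r=O(r)$ and $\phi_r\to h_0$. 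Thus $\rho g_s^2=O(r^{N-2})$ as well, and both integrals converge absolutely at $s=-\infty$ exactly when $N\ge3$. In dimension $N=2$, $\rho g^2\to f(0)^2/h_0^2\ne0$ and the identity breaks down, consistent with the paper's remark. At infinity there is nothing to check, since $f$ has compact support.

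\emph{Step 3 (extension to $H^1_{rad}$).} Given $f\in H^1_{rad}(\R^N)$, take radial $f_n\in C_c^\infty$ with $f_n\to f$ in $H^1$. The left side $Q_L(f_n)\to Q_L(f)$ because $v$ is bounded. For the right side, Step 1 gives the pointwise-in-$r$ identity $r^{N-2}\phi^2 g_s^2=r^N\bigl(f_r-f\phi_r/\phi\bigr)^2$. Convergence of the two pieces $\int\rho g_s^2$ and $(N-1)\int\rho g^2=(N-1)\int r^{N-2}f^2\,dr$ then follows from Hardy's inequality $\int |x|^{-2}f^2\le C\int|\nabla f|^2$ (valid for $N\ge3$) and the bound $|\phi_r/\phi|\le C(1+r^{-1})$. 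This last bound comes from Lemma \ref{lemlimitofk} applied to $\phi$ at infinity together with $\phi\sim h_0r$ near $0$. With these estimates, both sides are continuous in the $H^1$ norm, and the identity \eqref{indentity} passes to the limit.
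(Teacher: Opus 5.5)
Your proposal is correct and follows essentially the same route as the paper, which multiplies \eqref{eqofphi} by $r^{N-1}g^2\phi$ and integrates by parts on $(\epsilon,R)$ (the same as your expansion with the cross term integrated by parts), then removes the boundary term via $\phi\phi_r g^2 r^{N-1}=f(0)^2r^{N-2}+O(r^N)$ and extends to $H^1_{rad}(\R^N)$ with the radial Hardy inequality, which you justify in more detail. The only slips are minor: the zero-order term is $(N-1)\int_{\R}\rho g^2\,ds=(N-1)\int_0^\infty r^{N-3}f^2\,dr$ rather than $\int r^{N-2}f^2\,dr$, the bound on $\phi_r/\phi$ at infinity comes from Lemma \ref{lemlimitofk} through $\phi_r/\phi=k_r/k-k$, and for $N=2$ one has $\rho g^2\to f(0)^2$ rather than $f(0)^2/h_0^2$.
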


	\begin{proof}
		Multiplying both sides of \eqref{eqofphi} by $r^{N-1}g^2\phi$ and integrating on $(\ep,R)$ yields that
		\begin{align*}
			\left[\phi\phi_r g^2r^{N-1}\right]_{\ep}^R - \int_\ep^R \left(2\phi\phi_r gg_r + \phi_r^2 g^2\right)r^{N-1}dr = \int_\ep^R\left(\frac{N-1}{r^2} + \eta - (1-\theta)pv^{p-1} - \theta q v^{q-1}\right)\phi^2 g^2 r^{N-1}dr.
		\end{align*}
		Then we have
		\begin{align} \label{eqeptoR}
			& \int_\ep^R \left(f_r^2 + (\eta - (1-\theta)pv^{p-1} - \theta q v^{q-1})f^2\right)r^{N-1}dr \\
			& ~~~ = \int_\ep^R \phi^2g_r^2 r^{N-1}dr - (N-1)\int_\ep^R \phi^2g^2r^{N-3}dr + \left[\phi\phi_r g^2r^{N-1}\right]_{\ep}^R.
 		\end{align}
		Recall that $h_0 = -v_{rr}(0) > 0$. Since $v(x) \in C^4(\R^N)$ is radial, we know
		\begin{align*}
			\phi(r) = h_0r + O(r^3), \quad \phi_r(r) = h_0 + O(r^2), \quad \rho(s) = h_0^2r^{N}(1+O(r^2)).
		\end{align*}
		Moreover, since $f$ is smooth and radial, we have
		\begin{align*}
			f(r) = f(0) + O(r^2), \quad f_r(r) = f_{rr}(0)r + O(r^3).
		\end{align*}
		Therefore,
		\begin{align*}
			g(r) = \frac{f}{\phi} = \frac{f(0)}{h_0r} + O(r), \quad g_r(r) = \frac{\phi f_r - f\phi_r}{\phi^2} = -\frac{f(0)}{h_0r^2} + O(1).
		\end{align*}
		Recalling $s = \ln r$, one gets $g_s = rg_r$, yielding that
		\begin{align*}
			g_s = r\frac{\phi f_r - f\phi_r}{\phi^2} = -\frac{f(0)}{h_0r} + O(r).
		\end{align*}
		Thus, near $r = 0$, both $g$ and $g_s$ belong to $L^2(\rho ds)$ for $N > 2$. If $N = 2$ and $f(0) \neq 0$, both integrals diverge. We also notice that
		\begin{align*}
			\phi\phi_r g^2r^{N-1} = f(0)^2r^{N-2} + O(r^N) \to 0 \quad \text{ as } r \to 0.
		\end{align*}
		Choose $R > 0$ large enough such that $\text{supp~}f(x) \subset B_R(0)$. Then sending $\ep$ to $0$ in \eqref{eqeptoR} and performing variable substitution $s = \ln r$, we derive that
		\begin{align*}
			Q_L(f) & = \int_{\mathbb{S}^{N-1}}d\sigma \int_0^\infty\left(f_r^2 + (\eta - (1-\theta)pv^{p-1} - \theta q v^{q-1})f^2\right)r^{N-1}dr \\
			& = \omega_{N-1} \int_{\R} \rho(s)\left(g_s^2 - (N-1)g^2\right)ds.
		\end{align*}
Using the radial Hardy inequality
		\begin{align*}
			\int_0^\infty f(r)^2r^{N-3}dr \le \frac{4}{(N-2)^2}\int_0^\infty f_r(r)^2r^{N-1}dr,
		\end{align*}
		we can extend the identity \eqref{indentity} to the case of $f \in H^1_{rad}(\R^N)$.
	\end{proof}

\vskip0.3in

	Now we define the weighted Poincar\'e constant
	\begin{align} \label{def:laPrho}
		\la_{\text{P}}(\rho) := \inf_{\substack{0 \neq g \in H^1(\rho ds) \\ \int_{\R}\rho gds = 0}} \frac{\int_{\R}\rho g_s^2ds}{\int_{\R}\rho g^2ds},
	\end{align}
	where
	\begin{align*}
		H^1(\rho ds) := \big\{g \in H^1_{loc}(\R): \int_\R \rho \left(g_s^2 + g^2\right)ds < \infty \big\}.
	\end{align*}
	Lemma \ref{lemtransformation} yields that when $N \ge 3$,
	\begin{align*}
		\la_{\text{P}}(\rho) \ge N-1 \Rightarrow m_{rad}(v) \le 1 .
	\end{align*}
	In the weighted space $L^2(\rho ds)$, we consider an operator defined by
	\begin{align} \label{def:Lrho}
		L_\rho g = -\frac{1}{\rho}\frac{d}{ds}(\rho g_s) = -g_{ss} - \frac{\rho_s}{\rho}g_s,
	\end{align}
	that is $L_\rho = - \frac{d^2}{ds^2} - d(s)\frac{d}{ds}$ where
	\begin{align*}
		d(s) = \frac{\rho_s(s)}{\rho(s)} = (\ln \rho(s))_s.
	\end{align*}
	For $g \in C_c^\infty(\R)$, direct computations yield that
	\begin{align*}
		\langle L_\rho g,g \rangle_{L^2(\rho ds)} = \int_{\R}g_s^2\rho(s)ds \ge 0.
	\end{align*}
	Thus $L_\rho$ is nonnegative and symmetric. Considering its Friedrichs extension, the form domain is $H^1(\rho ds)$. Define the unitary operator
	\begin{align*}
		U: L^2(\rho ds) \to L^2(\R), \quad Ug = \sqrt{\rho}g.
	\end{align*}
	Let
	\begin{align*}
		f = Ug = \sqrt{\rho}g, \quad g = \rho^{-1/2}f.
	\end{align*}
	Since
	\begin{align*}
		(\rho^{-1/2})_s = -\frac 12\frac{\rho_s}{\rho}\rho^{-1/2} = -\frac{d(s)}{2}\rho^{-1/2},
	\end{align*}
	we have
	\begin{align*}
		g_s = \rho^{-1/2} \left(f_s - \frac{d(s)}{2}f\right).
	\end{align*}
	This inspires us to introduce the first-order operator
	\begin{align*}
		A = \frac{d}{ds} - \frac{d(s)}{2}
	\end{align*}
	acting on $L^2(\R)$, with
	\begin{align*}
		D(A) = \big\{f \in L^2(\R): f \in H^1_{loc}(\R), f_s - \frac{d(s)}{2}f \in L^2(\R)\big\}.
	\end{align*}
	Note that $C_c^\infty(\R) \subset D(A)$ and $A$ is a closed operator.
	In the usual space $L^2(\R)$,
	\begin{align*}
		\left(\frac{d}{ds}\right)^* = -\frac{d}{ds}.
	\end{align*}
    Then we obtain
	\begin{align*}
		A^* = -\frac{d}{ds} - \frac{d(s)}{2},
	\end{align*}
	with
	\begin{align*}
		D(A^*) = \big\{f \in L^2(\R): f \in H^1_{loc}(\R), -f_s - \frac{d(s)}{2}f \in L^2(\R)\big\}.
	\end{align*}
	A formal calculus gives
	\begin{align*}
		A^* A f = \left(-\frac{d}{ds} - \frac{d(s)}{2}\right)\left(f_s - \frac{d(s)}{2}f\right) = -f_{ss} + \frac{d_s(s)}{2}f + \frac{d(s)^2}{4}f.
	\end{align*}
	We introduce the operator
	\begin{align*}
		H_0 = A^* A = -\frac{d^2}{ds^2} + \frac{d(s)^2}{4} + \frac{d_s(s)}{2},
	\end{align*}
	with
	\begin{align*}
		D(H_0) = \big\{f \in D(A): Af \in D(A^*)\big\}.
	\end{align*}
	Its closed quadratic form is
	\begin{align} \label{formdomainofH_0}
		\mathfrak{h}_0(f) = \|Af\|_{L^2(\R)}^2, \quad D(\mathfrak{h}_0) = D(A).
	\end{align}
	One can see that
	\begin{align*}
		U L_\rho U^{-1} = H_0.
	\end{align*}
	Thus $L_\rho$ and $H_0$ have the same spectrum.

	Define
	\begin{align*}
		\widetilde{H} = A A^* = -\frac{d^2}{ds^2} + \frac{d(s)^2}{4} - \frac{d_s(s)}{2},
	\end{align*}
	with
	\begin{align*}
		D(\widetilde{H}) = \big\{f \in D(A^*): A^*f \in D(A)\big\}.
	\end{align*}
	Its closed quadratic form is
	\begin{align} \label{formdomainoftildeH}
		\widetilde{\mathfrak{h}}(f) = \|A^*f\|_{L^2(\R)}^2, \quad D(\widetilde{\mathfrak{h}}) = D(A^*).
	\end{align}

	\begin{lemma}
		$\ker H_0 = \text{span}\{\sqrt{\rho}\}$ and $\ker \widetilde{H} = \{0\}$.
	\end{lemma}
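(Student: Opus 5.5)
The plan is to use the factorizations $H_0 = A^*A$ and $\widetilde H = AA^*$ to reduce both kernel computations to first-order ODEs, which can be solved explicitly. One can then decide directly which solutions lie in $L^2(\R)$.

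\emph{Reduction to first-order equations.} Since $A$ is closed and densely defined (it contains $C_c^\infty(\R)$), von Neumann's theorem says that $A^*A$ and $AA^*$ are self-adjoint, with the form domains recorded in \eqref{formdomainofH_0} and \eqref{formdomainoftildeH}. Here $A^*$ is the adjoint of $A$ as identified above. If $f \in \ker H_0$, then $f \in D(H_0) \subset D(A)$ and $Af \in D(A^*)$, so
\begin{align*}
0 = \langle H_0 f, f\rangle_{L^2(\R)} = \langle A^*Af, f\rangle_{L^2(\R)} = \|Af\|_{L^2(\R)}^2 .
\end{align*}
Hence $\ker H_0 = \ker A$. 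In the same way, $\ker \widetilde H = \ker A^*$. The only point needing care is the duality pairing $\langle A^* g, f\rangle = \langle g, Af\rangle$ for $g \in D(A^*)$ and $f \in D(A)$, which is exactly the definition of the adjoint.

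\emph{Solving the ODEs.} An element $f \in \ker A$ is in $H^1_{loc}$ and satisfies $f_s = \frac{d(s)}{2}f$. Since $d = (\ln\rho)_s$ is continuous, $f = C e^{\frac12 \ln \rho} = C\sqrt{\rho}$. Similarly, $A^*f = 0$ forces $f = C\rho^{-1/2}$.

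\emph{Integrability.} It remains to check that $\sqrt\rho \in L^2(\R)$ and that $\rho^{-1/2}\notin L^2(\R)$.

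For the first, $\rho\,ds = r^{N-2}\phi^2\,\frac{dr}{r} = r^{N-3}\phi^2\,dr$. Near $r=0$ we have $\phi = h_0 r + O(r^3)$, so the integrand behaves like $h_0^2 r^{N-1}$, which is integrable. As $r\to\infty$, $\phi = -v_r$ decays exponentially by the decay lemma, so $\int_\R \rho\, ds < \infty$. Moreover $A\sqrt\rho = 0 \in D(A^*)$, so $\sqrt\rho \in D(H_0)$ and $\sqrt\rho\in\ker H_0$.

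For the second, $\int_\R \rho^{-1} ds = \int_0^\infty r^{1-N}\phi^{-2}\,dr$. By Lemma \ref{lemlimitofk}, $\phi = kv$ with $k \to \sqrt\eta$, and $v$ decays exponentially, so $\phi^{-2}$ grows exponentially as $r\to\infty$ and the integral diverges. (Near $r = 0$ it also diverges, like $r^{-N-1}$.) Hence $C = 0$ and $\ker\widetilde H = \{0\}$.

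I expect the main obstacle to be the functional-analytic step: confirming that the operator called $A^*$ with the stated maximal domain is really the Hilbert adjoint of the maximal $A$, so that the quadratic-form identity holds on $D(H_0)$ and $D(\widetilde H)$ without boundary terms. If that is in doubt, I would argue directly. For $f\in D(A)$ and $g\in D(A^*)$, integrate by parts on $[-R,R]$. The boundary term $fg$ tends to zero along a sequence $R_n\to\infty$, because $(fg)_s = (Af)g - f(A^*g) \in L^1$ and $fg \in L^1$. This gives the needed identity.
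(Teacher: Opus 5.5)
Your proposal is correct and follows essentially the paper's route: you reduce $\ker H_0=\ker A$ and $\ker\widetilde H=\ker A^*$ via the factorizations, solve the first-order ODEs to obtain $C\sqrt\rho$ and $C\rho^{-1/2}$, and exclude the latter because $\int_\R\rho^{-1}\,ds$ diverges. Beyond the paper, you also check that $\sqrt\rho\in L^2(\R)$ and justify, by integration by parts, that the maximal-domain $A^*$ is the Hilbert adjoint of $A$; the paper leaves both points implicit.
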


	\begin{proof}
		Solving $Af = 0$, that is
		\begin{align*}
			f_s - \frac{d(s)}{2}f = 0,
		\end{align*}
		we obtain $f = C\sqrt{\rho}$. Hence,
		\begin{align*}
			\ker A = \text{span}~\{\sqrt{\rho}\}.
		\end{align*}
		Solving $A^*f = 0$, that is
		\begin{align*}
			-f_s - \frac{d(s)}{2}f = 0,
		\end{align*}
		one gets $f = C\rho^{-1/2}$. However, $\rho^{-1/2} \notin L^2(\R)$ since the integral
		\begin{align*}
			\int_{\R}\rho^{-1}ds = \int_0^\infty \phi(r)^{-2}r^{1-N}dr
		\end{align*}
		is divergent. So
		\begin{align*}
			\ker A^* = \{0\}.
		\end{align*}
		Then by \eqref{formdomainofH_0} and \eqref{formdomainoftildeH}, we have
		\begin{align*}
			\ker H_0 = \ker A, \quad \ker \widetilde{H} = \ker A^*,
		\end{align*}
		completing the proof.
	\end{proof}

	Using polar decomposition, one can check that the operator $H_0|_{(\ker H_0)^{\bot}}$ is unitarily equivalent to the operator $\widetilde{H}$, see, for example, \cite[Theorem 8.6]{Tes}. Hence
	\begin{align*}
		\sigma\left(H_0|_{(\ker H_0)^{\bot}}\right) = \sigma(\widetilde{H}).
	\end{align*}
	This enables us to obtain the following result.

	\begin{proposition} \label{propinfH}
		$\la_{\text{P}}(\rho) = \inf \sigma(\widetilde{H})$.
	\end{proposition}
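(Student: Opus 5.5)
We will prove Proposition \ref{propinfH} by identifying $\la_{\text{P}}(\rho)$ with the bottom of the spectrum of $H_0$ restricted to $(\ker H_0)^\bot$, and then invoking the unitary equivalence $\sigma(H_0|_{(\ker H_0)^\bot}) = \sigma(\widetilde{H})$ obtained above from polar decomposition.

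\textbf{Step 1: rewrite $\la_{\text{P}}(\rho)$ as a constrained minimization in $L^2(\R)$.} Apply the unitary map $U g = \sqrt{\rho}\, g$. For $g \in H^1(\rho ds)$ put $f = \sqrt{\rho}\, g$. Then $g_s\sqrt{\rho} = Af$, so
\begin{align*}
\int_\R \rho g_s^2\,ds = \|Af\|_{L^2}^2, \qquad \int_\R \rho g^2\,ds = \|f\|_{L^2}^2, \qquad \int_\R \rho g\,ds = \langle f, \sqrt{\rho}\rangle_{L^2}.
\end{align*}
We need $\sqrt{\rho} \in L^2(\R)$, that is, $\int_\R \rho\,ds = \int_0^\infty \phi^2 r^{N-1}\,dr < \infty$. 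This holds because of the expansion $\phi \sim h_0 r$ near $r = 0$ and the exponential decay of $v_r$ at infinity. We also check that $U$ maps $H^1(\rho ds)$ bijectively onto $D(A)$, since both conditions amount to $f \in L^2$, $f \in H^1_{loc}$ and $Af \in L^2$. The orthogonality constraint then says exactly that $f \perp \ker H_0 = \text{span}\{\sqrt{\rho}\}$. Consequently
\begin{align*}
\la_{\text{P}}(\rho) = \inf\Big\{ \mathfrak{h}_0(f)/\|f\|^2 : 0 \neq f \in D(A),\ f \perp \ker H_0 \Big\}.
\end{align*}

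\textbf{Step 2: apply the variational principle on the reduced space.} Since $H_0$ is self-adjoint and nonnegative, $(\ker H_0)^\bot$ is a reducing subspace, and $H_0|_{(\ker H_0)^\bot}$ is self-adjoint with form $\mathfrak{h}_0$ restricted to $D(A) \cap (\ker H_0)^\bot$. It remains to check that this form domain is correct. Take $f \in D(A)$ and let $P$ be the orthogonal projection onto $\sqrt{\rho}$. Then $(I-P)f \in D(A)$ and $A(I-P)f = Af$, because $A\sqrt{\rho} = 0$. By the min--max characterization, the bottom of the spectrum of a self-adjoint operator equals the infimum of its Rayleigh quotient over the form domain. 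Hence the infimum in Step 1 equals $\inf \sigma(H_0|_{(\ker H_0)^\bot})$.

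\textbf{Step 3: conclude.} By Step 2 and the unitary equivalence, $\la_{\text{P}}(\rho) = \inf \sigma(H_0|_{(\ker H_0)^\bot}) = \inf \sigma(\widetilde{H})$. We recall why the restricted operator is unitarily equivalent to $\widetilde{H}$: the polar decomposition $A = W|A|$ gives a partial isometry $W$ from $(\ker A)^\bot$ onto $\overline{\text{Ran}A} = (\ker A^*)^\bot = L^2(\R)$, using $\ker A^* = \{0\}$, and $W$ intertwines $A^*A$ with $AA^*$.

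\textbf{Main obstacle.} The delicate point is the domain bookkeeping in Step 1. One must check that the weighted Sobolev space $H^1(\rho ds)$ corresponds exactly to the form domain $D(A)$ of the Friedrichs/closed-form realization, and not to some smaller closure of $C_c^\infty$. If the two differed, the infimum could change. This is handled by the fact that $\mathfrak{h}_0(f) = \|Af\|^2$ is defined with the maximal domain $D(A)$ and is closed, which is consistent with the definition of $H^1(\rho ds)$ through $H^1_{loc}$ and finiteness of the weighted integrals. A secondary point is verifying $\sqrt{\rho} \in L^2(\R)$; without it the constraint $\int_\R \rho g\,ds = 0$ would not be meaningful as an orthogonality condition.
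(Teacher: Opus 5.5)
Your proposal is correct and follows essentially the same route as the paper. You transport $\la_{\text{P}}(\rho)$ via $U$ to the Rayleigh quotient of $\mathfrak{h}_0$ on $D(A)\cap(\ker H_0)^{\perp}$, identify this with $\inf\sigma(H_0|_{(\ker H_0)^{\perp}})$, and conclude by the polar-decomposition unitary equivalence with $\widetilde{H}$; your extra domain bookkeeping ($U(H^1(\rho\,ds))=D(A)$, reducing subspace) fills in what the paper leaves implicit. One small slip: since $ds=dr/r$, $\int_{\R}\rho\,ds=\int_0^\infty \phi^2 r^{N-3}\,dr$ rather than $\int_0^\infty\phi^2 r^{N-1}\,dr$, though finiteness still holds because $\phi\sim h_0 r$ near $0$ and $\phi$ decays exponentially.
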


	\begin{proof}
		For $g \in H^1(\rho ds)$ and $g \neq 0$, we set $f = \sqrt{\rho}g$. Clearly, $f \in L^2(\R)$ and
		\begin{align*}
			\int_{\R}g^2 \rho ds = \int_{\R}f^2 ds.
		\end{align*}
		Moreover,
		\begin{align*}
			\int_{\R}g \rho ds = \int_{\R}f\sqrt{\rho} ds,
		\end{align*}
		thus the condition $\int_{\R}g \rho ds = 0$ is equivalent to $f \in (\ker H_0)^{\bot}$. Noticing
		\begin{align*}
			\int_{\R}g_s^2\rho ds = \|Af\|_{L^2(\R)}^2 = \mathfrak{h}_0(f),
		\end{align*}
		we get
		\begin{align*}
			\la_{\text{P}}(\rho) = \inf_{0 \neq f \bot \sqrt{\rho}} \frac{\mathfrak{h}_0(f)}{\|f\|_{L^2(\R)}^2} = \inf \sigma\left(H_0|_{(\ker H_0)^{\bot}}\right).
		\end{align*}
		Recalling
		\begin{align*}
		\sigma\left(H_0|_{(\ker H_0)^{\bot}}\right) = \sigma(\widetilde{H}),
	\end{align*}
	we complete the proof.
	\end{proof}

\vskip0.3in

	Choose a test function such that $g_s = 1/k > 0$ and let
	\begin{align} \label{testfunction}
		\Phi := \sqrt{\rho}g_s = r^{(N-2)/2}v > 0.
	\end{align}
	Set $\tau = (N-2)/2$. Note that $-d_s(s)/2 = y_s + z_s$ and
	\begin{align*}
		\frac{\Phi_{ss}}{\Phi} = \frac{r(r\Phi_r)_r}{\Phi} = \tau^2 - (2\tau +1)y + \frac{r^2v_{rr}}{v} = (\tau - y)^2 - y_s.
	\end{align*}
	We obtain
	\begin{align} \label{HPhi}
		\frac{\widetilde{H}\Phi}{\Phi} = -\frac{\Phi_{ss}}{\Phi} + \frac{d(s)^2}{4} - \frac{d_s(s)}{2} = 2y_s + z_s - (\tau - y)^2 + \left(\frac{N}{2} - y - z\right)^2 = N-1 + r^2P - 2zT.
	\end{align}
	Now we set
	\begin{align} \label{def:G}
		\G := r^2P - 2zT = -rh\frac{d}{dr}\left(\frac{T}{h}\right).
	\end{align}
	Then,
	\begin{align*}
		\frac{\widetilde{H}\Phi}{\Phi} = N-1 + \G.
	\end{align*}

	\begin{proposition} \label{propG>01}
		Suppose $N \ge 2$ and $1 < q < p < 2^* -1$. When $2 \le N \le 5$, we further assume $p \le p_{\text{alg}}(N)$. Then $\G > 0$ in $r > 0$.
	\end{proposition}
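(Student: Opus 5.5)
The plan is to prove $\G>0$ pointwise through an algebraic estimate built from Lemma \ref{lemestimate} and Lemma \ref{lemsestimate}. I will first rewrite everything in terms of $y=rk=r^2h$, $z$, and the auxiliary quantity $a:=r^2J>0$, with $T=N-y-z$.

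From Lemma \ref{lemestimate}:
\begin{itemize}
\item $h_r<0$ gives $z>0$;
\item $y_s=y(2-z)>0$, since $y_r>0$, gives $z<2$;
\item $k_r<J$ together with $z=1-k_r/h$ gives $z>1-a/y$.
\end{itemize}
From Lemma \ref{lemsestimate}, $k_r>J-\beta h$ gives $z<1+\beta-a/y$. For the remaining term of $\G$, I compare $r^2P$ with $a$ using the explicit forms of $P$ and $J$. Each power term in $P$ is $(p+1)$ resp. $(q+1)$ times the corresponding term in $J$, so $r^2P\ge (q+1)a$. Where this is too weak, I will instead use $P\ge mJ$-type relations based on $q-1<m<p-1$. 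I will also keep the identity $\G=-rh\,(T/h)_r$ available as a fallback.

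Next, split into cases according to the sign of $T$.
\begin{itemize}
\item If $T\le 0$, then $z>0$ and $P>0$ give $\G=r^2P-2zT>0$ immediately.
\item If $T>0$, then $0<y+z<N$, and it suffices to show $r^2P>2zT$.
\end{itemize}
In the second case $zT=z(N-y-z)$ is a concave quadratic in $z$, so I bound it by its maximum over the admissible interval $\max(0,1-a/y)<z<\min(2,1+\beta-a/y)$. When that interval lies to the left of the vertex $(N-y)/2$, I evaluate at the endpoint $z=1+\beta-a/y$; otherwise I use the crude bound $(N-y)^2/4$.

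After this substitution the inequality becomes an explicit polynomial inequality in the two variables $y>0$ and $a>0$. The coefficients depend on $N$, $p$ and on $\beta=(N-1)(p-1)/(p+3)$. Multiplying by $y$ and treating the result as a quadratic in $a$ (or in $y$), positivity for all admissible $y,a$ should reduce to a discriminant condition. I expect that condition to come out as
\begin{align*}
Np^2-8p-8\le 0,
\end{align*}
which is precisely $p\le p_{\text{alg}}(N)=\frac{4+2\sqrt{2N+4}}{N}$. For $N\ge 6$ the condition must be automatically implied by $p<2^*-1=\frac{N+2}{N-2}$; I will verify this by direct comparison of the two thresholds, which is a one-variable check in $N$. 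The weaker term $q$ should enter only through $m>q-1$. Since the constraints are uniform in $\theta$, monotonicity in the exponent should let me replace $q$ by its worst case.

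The main obstacle is this final algebraic step. One difficulty is choosing which combination of the bounds on $z$ and on $r^2P/a$ actually closes the inequality sharply at $p_{\text{alg}}$ rather than at a worse threshold. The other is handling the regime where the upper bound $1+\beta-a/y$ exceeds the vertex of the quadratic. If the pointwise approach leaves a gap, I will fall back on a first-touching argument. Near $r=0$ I will check $\G>0$ from the expansion \eqref{eqextending} together with $2h_0\le P(0)$, and as $r\to\infty$ from Lemma \ref{lemlimitofk}, where $r^2P\to0$ exponentially but $T\to-\infty$ since $y\sim\sqrt\eta\, r$. At a first zero $s_0$ of $\G$ I would then compute $\G_s$ via \eqref{ys}, \eqref{zs} and show it has the wrong sign, using the same algebraic inequality only on the zero set, which is a smaller set of constraints.
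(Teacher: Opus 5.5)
Your pointwise route cannot work, because the inequality $r^2P>2zT$ is false on the admissible set you describe. Take $y=\tfrac12$, $z=1$, $a=\varepsilon y$ with $0<\varepsilon<\beta$. All your constraints hold: $0<z<2$, $1-a/y<z<1+\beta-a/y$, $T=N-\tfrac32>0$. Yet $r^2P\le(p+1)a=O(\varepsilon)$ while $2zT=2N-3\ge1$, so $\G<0$. Nothing in Lemmas \ref{lemestimate} and \ref{lemsestimate} prevents $J/h$ from being small while $zT$ is of order one, so no discriminant condition can come out of such an estimate.

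The proof must be of first-touching type, and the steps you have not identified are the ones doing the work. At a first zero $R$ one has the extra relation $R^2P=2zT$, hence $T(R)>0$. Using \eqref{ys}, \eqref{zs} and $P_\xi=-mP$ one gets $R\,\G_r(R)=2T(R)\mathcal{D}(R)$ with $\mathcal{D}=y(2-mz)+z(2z-N+2)$, and the task is to prove $\mathcal{D}(R)>0$. This needs a lower bound on $y(R)$. It comes from $z<1+\beta-J/h$ combined with $J\ge P/(p+1)$ (the opposite direction to your $r^2P\ge(q+1)a$) and $R^2P=2zT$, which give $y>2z(N-z)/\big((p+1)(1+\beta)-(p-1)z\big)$. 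If $(p-1)z\le2$, this bound yields $\mathcal{D}>0$ by explicit algebra for every $p<2^*-1$. If $(p-1)z>2$, one instead uses $y<N-z$ and $2-(p-1)z<0$ to get $\mathcal{D}>(p+1)z^2-Npz+2N$. Only in this second case does the discriminant $N^2p^2-8Np-8N$ enter.

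Your reduction for $N\ge6$ is also wrong: $Np^2-8p-8\le0$ is not implied by $p<\frac{N+2}{N-2}$ once $N\ge7$. For instance, $p_{\text{alg}}(7)=\frac{4+6\sqrt2}{7}\approx1.784<\frac95$, and $p_{\text{alg}}(N)<1$ for $N>16$. In fact $p_{\text{alg}}(N)<\frac{N+2}{N-2}$ for all $N\neq6$. A proof that requires the discriminant condition globally would therefore miss a nonempty range of admissible $p$ in every dimension $N\ge7$. In the paper the discriminant is never needed for $N\ge6$: there $p<2$ and $z<1+\beta=\frac{Np-(N-4)}{p+3}$ force $(p-1)z<2$, so the second case is empty.

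Finally, your check near $r=0$ uses an inequality in the wrong direction. Since $\G=\frac{4Nh_0-(N-2)P(0)}{N+2}r^2+o(r^2)$, you need a lower bound on $h_0$, whereas $2h_0\le P(0)$ is an upper bound. The correct input is $h_0\ge J(0)/2\ge P(0)/(2(p+1))$, which follows from $k_r>J-h$ at $r=0$. Positivity of the coefficient then follows exactly from $p+1<2^*$.
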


	\begin{proof}
		By \eqref{eqextending} we have
		\begin{align} \label{expandingofz}
			z = -\frac{rh_r}{h} = \frac{P(0) - 2h_0}{N+2} r^2 + o(r^2).
		\end{align}
		Noticing
		\begin{align*}
			T(0) = N - y(0) - z(0) = N - 0 - \frac{h_0 - k_r(0)}{h_0} = N,
		\end{align*}
		we get
		\begin{align*}
			T = N + o(1).
		\end{align*}
		Moreover, $\xi(0) = -\ln v(0) = -\ln 1 = 0$ and
		\begin{align*}
			P(\xi(r)) = P(0) + o(1).
		\end{align*}
		Thus
		\begin{align*}
			\G = r^2P - 2zT = \frac{4Nh_0 - (N-2)P(0)}{N+2}r^2 + o(r^2).
		\end{align*}
		By Lemma \ref{lemestimate}, $k_r > J -h$. Sending $r$ to $0$,
		\begin{align*}
			k_r = rh_r + h \to h_0,
		\end{align*}
		and we have $h_0 \ge J(0) - h_0$, namely $h_0 \ge J(0)/2$. Together with
		\begin{align*}
			J(0) = (1-\theta)\frac{p-1}{p+1} + \theta \frac{q-1}{q+1} \ge \frac{1}{p+1} \left((1-\theta)(p-1) + \theta (q-1)\right) = \frac{P(0)}{p+1},
		\end{align*}
		we deduce that
		\begin{align*}
			h_0 \ge \frac{P(0)}{2(p+1)}.
		\end{align*}
		Since $P(0) > 0$ and $p < 2^*-1$, we obtain
		\begin{align*}
			\frac{4Nh_0 - (N-2)P(0)}{N+2} \ge \frac{1}{N+2}\left(\frac{2N}{p+1} - (N-2)\right)P(0) > 0,
		\end{align*}
		implying that $\G(r) > 0$ when $r > 0$ is small enough.

		Now we suppose by contradiction that $\G$ has a zero point in $r > 0$. Then there exists $R > 0$ such that
		\begin{align*}
			\G(r) > 0 ~~ (0 < r < R), \quad \G(R) = 0.
		\end{align*}
		We also have
		\begin{align*}
			\G_r(R) \le 0.
		\end{align*}
		At $r = R$, it is clear that
		\begin{align*}
			R^2P(\xi(R)) = 2z(R)T(R), \quad T(R) > 0.
		\end{align*}
		Using \eqref{ys} and \eqref{zs}, we get
		\begin{align*}
			r\G_r = \G_s & = r^2P_\xi\xi_r r_s + 2r^2P - 2z_sT - 2zT_s \\
			& = r^2P(2 - my) - 2T(r^2P - zT - y(2-z)) + 2z(r^2P - zT)
		\end{align*}
		From $R^2P(\xi(R)) = 2z(R)T(R)$ and $T = N - y -z$ one derives that
		\begin{align*}
			R\G_r(R) = 2T(R)\left(y(R)\left(2-m(\xi(R))z(R)\right) + z(R)(2z(R) - N +2)\right).
		\end{align*}
		Let
		\begin{align*}
			\mathcal{D} := y(2-mz) + z(2z - N +2).
		\end{align*}
		We aim to prove $\mathcal{D}(R) > 0$ to find a contradiction.
		
\vskip0.2in

		By Lemma \ref{lemsestimate} and using
		\begin{align*}
			k_r = rh_r + h = h(1-z)
		\end{align*}
		we obtain
		\begin{align*}
			h(1-z) > J - \beta h
		\end{align*}
		where $\beta$ is given by \eqref{def:beta}, namely
		\begin{align*}
			z < 1 + \beta - \frac{J}{h} < 1 + \beta.
		\end{align*}
		By $J \ge P/(p+1)$ and $r^2h = y$, we have
		\begin{align*}
			\frac{J}{h} \ge \frac{P}{(p+1)h} = \frac{r^2P}{(p+1)y}.
		\end{align*}
		At $r = R$, we get
		\begin{align*}
			\frac{J(\xi(R))}{h(R)} \ge \frac{2z(R)T(R)}{(p+1)y(R)},
		\end{align*}
		so
		\begin{align*}
			z(R) < 1 + \beta - \frac{2z(R)T(R)}{(p+1)y(R)}.
		\end{align*}
		Combining $T = N -y - z$ we deduce that
		\begin{align} \label{eqy(R)}
			y(R) > \frac{2z(R)(N-z(R))}{D_*}
		\end{align}
		where
		\begin{align*}
			D_* := (p+1)(1+\beta) - (p-1)z(R) > 2(1+\beta) > 0.
		\end{align*}

\noindent{\bf Case 1:} $(p-1)z(R) \le 2$.

		By $m \le p-1$, one gets
		\begin{align*}
			2-m(\xi(R))z(R) \ge 2 - (p-1)z(R) \ge 0.
		\end{align*}
		If $2z(R) - N + 2 \ge 0$, then $\mathcal{D}(R) \ge 0$. Moreover, if $\mathcal{D}(R) = 0$, we have
		\begin{align*}
			m(\xi(R))z(R) = 2, \quad 2z(R) = N -2.
		\end{align*}
		By Lemma \ref{lemestimate}, $z > 0$ when $r > 0$. Thus $N > 2$ in this case and
		\begin{align*}
			m(\xi(R)) = \frac{2}{z(R)} = \frac{4}{N-2},
		\end{align*}
		contradicting
		\begin{align*}
			m \le p-1 < 2^*-2 = \frac{4}{N-2}.
		\end{align*}
		This yields $\mathcal{D}(R) > 0$, a contradiction!

		If $2z(R) - N + 2 < 0$, using \eqref{eqy(R)} and
		\begin{align*}
			\mathcal{D}(R) & = y(R)(2-m(\xi(R))z(R)) + z(R)(2z(R) - N +2) \\
			& \ge y(R)(2-(p-1)z(R)) + z(R)(2z(R) - N +2),
		\end{align*}
		we get
		\begin{align*}
			\mathcal{D}(R) \ge \frac{z(R)}{D_*}E, \quad E = 2(N-z(R))(2-(p-1)z(R)) - D_*(N-2-2z(R)).
		\end{align*}
		Computing directly with \eqref{def:beta} yields
		\begin{align*}
			(p+3)E = E_0 + Kz(R)
		\end{align*}
		where
		\begin{align*}
			E_0 = 8(N + p + 1) - (p-1)(p+1)N(N-2), \quad K = (p-1)((p-1)(N-2) - 4) < 0.
		\end{align*}
		By $z(R) < 1 + \beta$ we have
		\begin{align*}
			E_0 + Kz(R) > E_0 + K(1+\beta).
		\end{align*}
		Computing directly again, one gets
		\begin{align*}
			(p+3)(E_0 + K(1+\beta)) = (4- (N-2)(p-1))(6N(p-1) + 8(N+2)) > 0,
		\end{align*}
		yielding $E > 0$. Thus $\mathcal{D}(R) > 0$, a contradiction!

\vskip0.22in
\noindent{\bf Case 2:} $(p-1)z(R) > 2$.

		At $r = R$,
		\begin{align*}
			T(R) = N - y(R) - z(R) > 0
		\end{align*}
		implying that $y(R) < N -z(R)$. Then
		\begin{align*}
			\mathcal{D}(R) & = y(R)(2-m(\xi(R))z(R)) + z(R)(2z(R) - N +2) \\
			& \ge y(R)(2-(p-1)z(R)) + z(R)(2z(R) - N +2) \\
			& > (N -z(R))(2-(p-1)z(R)) + z(R)(2z(R) - N +2)\\
			& =: R_p(z(R)).
		\end{align*}
		Note that
		\begin{align*}
			R_p(z(R)) = (p+1)z(R)^2 - Npz(R) + 2N
		\end{align*}
		is a quadratic polynomial with respect to $z(R)$. Let
		\begin{align*}
			\Delta_p = N^2p^2 - 8Np - 8N.
		\end{align*}
		The equation $\Delta_p = 0$ has the positive root
		\begin{align*}
			p_{\text{alg}}(N) = \frac{4 + 2\sqrt{2N + 4}}{N}.
		\end{align*}
		When $2 \le N \le 5$, the additional condition $p \le p_{\text{alg}}(N)$ enables us to obtain $\Delta_p \le 0$ and so $R_p(z(R)) \ge 0$. Thus we find the contradiction that $\mathcal{D}(R) > 0$.

		Finally, we notice that when $N \ge 6$, $p < (N+2)/(N-2) \le 2$ and
		\begin{align*}
			(p-1)z < (p-1)(1+\beta) = (p-1)\frac{Np - (N-4)}{p+3} < \frac{Np - (N-4)}{p+3} < 2.
		\end{align*}
		This means that Case 2 does not occur when $N \ge 6$ and we complete the proof.
	\end{proof}

\vskip0.3in

	\begin{proposition} \label{propG>02}
		Suppose $2 \le N \le 5$ and $1 < q < p < 2^* -1$. We further assume $p - q \le 1$. Then $\G > 0$ in $r > 0$.
	\end{proposition}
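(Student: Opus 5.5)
We follow the scheme of the proof of Proposition \ref{propG>01} as far as it goes, and change only the step where the hypothesis $p\le p_{\text{alg}}(N)$ was used.

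\textbf{Reduction.} The expansion \eqref{expandingofz} and the bound $h_0\ge P(0)/(2(p+1))$ give $\G(r)>0$ for small $r>0$. This uses only $p<2^*-1$. So, arguing by contradiction, we take the first zero $R>0$ of $\G$. At $R$ we have $R^2P(\xi(R))=2z(R)T(R)$, $T(R)>0$ and $R\G_r(R)=2T(R)\mathcal{D}(R)$. It therefore suffices to show $\mathcal{D}(R)>0$. Case~1, namely $(p-1)z(R)\le 2$, goes through verbatim. The sign of $E$ there required only $(N-2)(p-1)<4$, and the estimate \eqref{eqy(R)} comes from Lemma \ref{lemsestimate}, which holds for all subcritical $p$. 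For $N\ge 6$ Case~2 cannot occur, but here $2\le N\le5$. So the whole task is Case~2, $(p-1)z(R)>2$, with $p$ possibly above $p_{\text{alg}}(N)$, where now $p-q\le1$ must be used.

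\textbf{Using $p-q\le1$ in Case 2.} In Proposition \ref{propG>01}, Case~2 was lossy because of the crude bound $m\le p-1$. We will instead exploit that $m(\xi)$ is a weighted average of $p-1$ and $q-1$ whose spread is at most $1$. Write $a=(1-\theta)(p-1)e^{-(p-1)\xi}$ and $b=\theta(q-1)e^{-(q-1)\xi}$. Then $P=a+b$, $m=\frac{(p-1)a+(q-1)b}{a+b}$ and $J=\frac{a}{p+1}+\frac{b}{q+1}$. Hence $m$ and $J/P$ are governed by the same weight $t=a/(a+b)\in(0,1)$: $m=q-1+t(p-q)$, and $J/P=\frac{1}{q+1}-t\bigl(\frac1{q+1}-\frac1{p+1}\bigr)$. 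This coupling is what makes $p-q\le1$ usable.

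\textbf{Inequality to prove.} We plan to sharpen \eqref{eqy(R)} by replacing $J\ge P/(p+1)$ with the exact value of $J/P$ in terms of $t$. This gives a lower bound on $y(R)$ involving $t$. Together with the upper bound $y(R)<N-z(R)$ from $T(R)>0$, we will show that
\[
\mathcal{D}(R)=y(2-mz)+z(2z-N+2)
\]
is positive. Since $2-mz$ may be negative in Case~2, we use the upper bound on $y$ where $2-mz<0$. We then obtain a polynomial inequality in the two variables $(z,t)\in(2/(p-1),\,1+\beta)\times[0,1]$, with parameters $N\le5$, $q\ge p-1$ and $p<2^*-1$. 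For $t=1$ this reduces to $R_p(z)$ from Proposition \ref{propG>01}. The gain is that $m\le q-1+t\le p-1$, with the deficit $p-1-m$ proportional to $1-t$, while larger $1-t$ also pushes $J/P$ up and hence enlarges the admissible lower bound on $y$. We also intend to use $z<1+\beta-J/h$ in its full form, not just $z<1+\beta$, since in Case~2 it restricts $z$ to a narrow window.

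\textbf{Main obstacle.} The hard step is verifying the two-variable inequality in the region where $p$ is close to $2^*-1$ (e.g.\ $N=3$, $p$ near $5$) and $t$ is near $1$. There the weight concentrates on the $p$-term and $m\approx p-1$, so the argument essentially falls back to $R_p$, which can be negative. We expect to handle this by showing that $t$ near $1$ forces $z(R)$ to be small. The reason is that then $J/P\approx 1/(p+1)$ and $\beta$ is the sharp constant in Lemma \ref{lemsestimate}. We would try first to prove that the upper constraint $z<1+\beta-J/h$ and the lower constraint $(p-1)z>2$ become incompatible when $t$ is near $1$ and $q\ge p-1$. Failing that, we would redo Lemma \ref{lemsestimate} with a $t$-dependent $\beta$, namely $\beta(\xi)$ equal to the weighted average of $\frac{(N-1)(p-1)}{p+3}$ and $\frac{(N-1)(q-1)}{q+3}$, to get the needed extra room.
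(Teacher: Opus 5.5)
\textbf{There is a genuine gap in Case~2.} Your reduction and Case~1 are fine, but the Case~2 plan cannot close with the constraints you list.

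When $m z(R)>2$, $\mathcal{D}=y(2-mz)+z(2z-N+2)$ is decreasing in $y$, so only an \emph{upper} bound on $y(R)$ helps. The only one you have is $y<N-z$, from $T(R)>0$, and it yields exactly $(m+2)z^2-N(m+1)z+2N$. Your constraints do not exclude the region $t$ near $1$, where $m\approx p-1$. For $N=3$, $p\to5$, $m\to 4$ this quadratic tends to $3(2z-1)(z-2)$. That is negative on all of $(1/2,2)$, which is precisely the limiting window $2/(p-1)<z<1+\beta$.

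Neither fallback helps, because both act on the lower bound for $y$ (equivalently the upper bound for $z$). Letting $y\uparrow N-z$ sends $T\to0$, hence $J/h=(J/P)\,2zT/y\to0$. So $z<1+\beta-J/h$ gives nothing beyond $z<1+\beta$. A $t$-dependent $\beta$ collapses to the same constant at $t=1$. Your constraint set therefore admits points with $\mathcal{D}<0$, and no sharpening of the coupling between $m$ and $J/P$ will fix this. The difficulty is already present in the pure-power limit, where that coupling is invisible.

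\textbf{The missing idea is the identity $S=hT$.} It is just \eqref{eqofh_r} rewritten: $rh_r=r^2h^2+S-Nh$ gives $S=h(N-y-z)$. Multiplying $R^2P=2zT$ by $h$ eliminates $r$ and gives $yP=2zS$. Using $S(R)=h(R)T(R)>0$, this reads
\[
y(R)=\frac{2zS}{P}=2z(1-\ep)\frac{F}{P}=2z(1-\ep)\frac{p+q-2-m}{(p-1)(q-1)},\qquad \ep=\frac{\eta}{F}\in(0,1).
\]
This is an exact, and much sharper, upper bound on $y$. Substituting it into $\mathcal{D}$ gives, at $r=R$,
\[
\frac{(p-1)(q-1)}{z}\mathcal{D}=(q-1)\bigl(4-(N-2)(p-1)\bigr)+2(p-1-m)\bigl(2-z(m+1-q)\bigr)+2\ep(p+q-2-m)(mz-2).
\]
If $mz>2$, all three terms are positive:
\begin{itemize}
\item the first because $p<2^*-1$;
\item the second because $z<2$ (from $k_r>J-h$) and $m+1-q<p-q\le1$;
\item the third because $\ep>0$ and $p+q-2-m>q-1>0$.
\end{itemize}
This is where $p-q\le1$ actually enters. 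The remaining subcase $mz\le2$ is immediate, since $2z-N+2>0$ in Case~2.
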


	\begin{proof}
		We argue by contradiction and assume that $R > 0$ is a zero point of $G$ such that
		\begin{align*}
			\G(r) > 0 ~~ (0 < r < R), \quad \G(R) = 0; \quad \quad \G_r(R) \le 0.
		\end{align*}
		At $\G = 0$, we have
		\begin{align*}
			P = \frac{2zT}{r^2} = 2z\frac{S}{r^2h} = 2z\frac{S}{y}.
		\end{align*}
		Thus
		\begin{align*}
			y = 2z\frac{S}{P} = 2z\frac{F - \eta}{P} = 2z(1-\ep)\frac{F}{P} = 2z(1-\ep)\frac{p+q-2-m}{(p-1)(q-1)}, \quad \text{ at } r = R,
		\end{align*}
		where
		\begin{align*}
			\ep := \frac{\eta}{F}; \quad \ep \in (0,1) ~~ \text{ at } r = R.
		\end{align*}
		Recall that
		\begin{align*}
			\mathcal{D} = y(2-mz) + z(2z - N +2).
		\end{align*}
		Similar to the proof of Proposition \ref{propG>01}, we aim to prove $\mathcal{D}(R) > 0$ to find a contradiction. Direct computations yield
		\begin{align} \label{eqofDatR}
			\frac{(p-1)(q-1)}{z} \mathcal{D} = (q-1)\left(4 - (N-2)(p-1)\right) & + 2(p-1-m)\left(2 - z(m+1-q)\right) \\
			& + 2\ep(p+q-2-m)(mz - 2)\quad \text{ at } r = R.
		\end{align}
		
		By Case 1 in the proof of Proposition \ref{propG>01}, we get $\mathcal{D}(R) > 0$ if $(p-1)z(R) \le 2$. Thus we assume that $(p-1)z(R) > 2$. When $N \ge 3$, we have
		\begin{align*}
			2z(R) - N + 2 > \frac{4}{p-1} - (N-2) > 0.
		\end{align*}
		When $N =2$, it is clear that
		\begin{align*}
			2z(R) - N + 2 = 2z(R) > 0.
		\end{align*}
		If $m(\xi(R))z(R) \le 2$, we derive $\mathcal{D}(R) > 0$. If $m(\xi(R))z(R) > 2$, using \eqref{eqofDatR}, $1 <q < p < 2^* -1$, $q-1 < m < p-1$, $z < 2$, and
		\begin{align*}
			m+1 - q < p - q \le 1,
		\end{align*}
		we conclude $\mathcal{D}(R) > 0$. The proof is complete.
	\end{proof}

	\section{Positivity of $\G$ when $N = 5$} \label{G>0whenN=5}

	\begin{proposition} \label{propG>03}
		Suppose $N = 5$ and $1 < q < p < 7/3$. Then $\G > 0$ in $r > 0$.
	\end{proposition}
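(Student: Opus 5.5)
Following the proofs of Proposition \ref{propG>01} and Proposition \ref{propG>02}, I will argue by contradiction. The small-$r$ expansion in the proof of Proposition \ref{propG>01} gives $\G>0$ near $r=0$. Assume $R>0$ is the first zero of $\G$, so that $\G_r(R)\le 0$. The computation already carried out reduces the contradiction to showing $\mathcal{D}(R)>0$, where $\mathcal{D}=y(2-mz)+z(2z-N+2)$ with $N=5$.

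Case 1 of Proposition \ref{propG>01}, i.e. $(p-1)z(R)\le 2$, goes through verbatim for every $p<2^*-1$. So I only need to treat $(p-1)z(R)>2$; this forces $z(R)>2/(p-1)>3/2$ since $p<7/3$. Moreover $p_{\text{alg}}(5)=(4+2\sqrt{14})/5\approx 2.297$, so the only remaining range is $p_{\text{alg}}(5)<p<7/3$ with $(p-1)z(R)>2$. In this regime $2z-3>0$, so the second term of $\mathcal{D}$ is positive. The difficulty is the first term $y(2-mz)$, which may be negative. The crude bound $y<N-z$ used in Case 2 loses too much, so I need a sharper upper bound on $y(R)$, i.e. on $rk$ at the point where $r^2P=2zT$.

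The key new ingredient is a monotone quantity $\cR$ built from $v$, its derivatives and powers of $r$, modelled on the Pohozaev/moment functionals $\p_0,\p_*,\p_\beta$ of Lemmas \ref{lemestimate}--\ref{lemsestimate}. I would design $\cR$ to vanish identically on the $5$-dimensional Talenti bubble $W_0$ (the positive solution of $-\Delta W=W^{7/3}$, for which $y,z$ are explicit). Its derivative should have a sign involving the defect terms $\eta v$ and $(p-q)$ and the subcritical gaps $7/3-p$ and $7/3-q$; using $\cR(0)=0=\cR(\infty)$ this yields $\cR>0$. Concretely, I would first compute $y,z$ along $W_0$ to find the algebraic relation between $y$, $z$ and $r^2P$ that holds exactly for the bubble. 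I would then take $\cR=r^{N}v^2\times(\text{that relation})$ and verify $\cR'\ge 0$, in the same way that $k^{-1}\mathfrak{L}_r<0$ was verified using $k_r<J$ and $q-1<m<p-1$. Integrating the resulting inequality against the measure should compare the $L^{p+1}$- and $L^{q+1}$-moments of $v$ with those of $W_0$. Evaluated pointwise at $r=R$, it should yield an inequality of the form $y(R)<C(z(R),m)$ that is strictly better than $N-z$; I expect this to be the key estimate \eqref{C<}. The fact that $W_0\in L^2$ exactly when $N\ge5$ is what makes the boundary terms at infinity vanish.

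With this bound in hand, I would finish by substituting into $\mathcal{D}(R)\ge y(2-(p-1)z)+z(2z-3)$, using $m\le p-1$ and $2-(p-1)z<0$. This reduces $\mathcal{D}(R)>0$ to an explicit polynomial inequality in $z\in(2/(p-1),1+\beta)$ and $p\in(p_{\text{alg}}(5),7/3)$. I would also use $z<1+\beta$ from Lemma \ref{lemsestimate} together with the lower bound \eqref{eqy(R)} when needed. That polynomial inequality is to be checked by direct calculus; it should be tight exactly at $p=7/3$.

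The main obstacle is finding the right functional $\cR$ and proving its sign. Its derivative involves $v^{p-1}$ and $v^{q-1}$ with coefficients that must all be controlled simultaneously. I would first try a linear combination $r^N(v_r^2+2\mathcal{F}(v))+a\,r^{N-1}vv_r+b\,r^{N-2}v^2$, with constants $a,b$ tuned so that it vanishes on $W_0$, and use Lemma \ref{lemestimate} to sign the remaining terms.
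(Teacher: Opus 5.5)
Your reduction is correct as far as it goes: take the first zero $R$ of $\G$; Case 1 of Proposition \ref{propG>01} disposes of $(p-1)z(R)\le 2$; so $z(R)>3/2$, and you need an upper bound on $y(R)$ better than $N-z$. Your positivity idea can also be made to work. With $\cR=\frac{2}{3}y-z$, the functional $r^5v^2h\cR=\frac{5}{3}r^5v_r^2+5r^4vv_r+r^5(F-\eta)v^2$ vanishes on $W_0$. Its derivative is $r^5v^2k\bigl((1-\theta)(\frac{7}{3}-p)v^{p-1}+\theta(\frac{7}{3}-q)v^{q-1}-\frac{4}{3}\eta\bigr)$, which is positive and then negative, so $\cR>0$ follows as for $\p_0$; the paper uses a first-zero argument instead. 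However, your concrete trial family built on $v_r^2+2\mathcal{F}(v)$ vanishes on $W_0$ only for $a=3$, $b=0$, so it just returns $\p_0$.

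The genuine gap is at the decisive point. You never produce the bound on $y(R)$, and the mechanism you describe cannot produce it: ``evaluate the moment comparison pointwise at $R$'', then finish with a polynomial inequality in $(z,p)$. The moment comparison is global information. What connects it to $R$ is the identity $y=z\Gamma$, $\Gamma=2S/P$, valid at any zero of $\G$. This reduces the needed bound on $y(R)$ to an upper bound on $\Gamma(\xi(R))$, i.e.\ a lower bound on the global parameter $\eta$. The paper then proceeds in three moves:
\begin{itemize}
\item It writes $\eta=\sum_\sigma\omega_\sigma\frac{7-3\sigma}{2(\sigma+1)}M_\sigma$ via Nehari plus Pohozaev. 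This is why moments normalized by $\int v^2$, and hence $W_0\in L^2(\R^5)$, are needed; it is not about vanishing boundary terms.
\item It uses $M_\sigma>M_\sigma^{(0)}\ge e^{-(\sigma-1)\bar\xi_0}$, obtained from $y>y_0(\xi)$, layer cakes, Chebyshev and Jensen.
\item It obtains $\mathscr{C}<\bar\xi_0+\frac{1}{2}-\xi$.
\end{itemize}
None of these links appears in your plan.

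Two further ingredients are missing.

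First, the bound on $\mathscr{C}$ depends on the level $\xi(R)$. So a pointwise lower bound $\xi(R)>-\frac{3}{2}\ln\bigl(1-\frac{z(R)}{2}\bigr)$, an identity on the bubble, is indispensable. The paper proves it via monotonicity of $\xi+\frac{3}{2}\ln(1-\frac{z}{2})$, i.e.\ $\cR_s>0$ on $(0,r_*)$. That proof needs $m\Gamma>2$ and $m>\frac{2}{3}$ on $(0,r_*]$, which are extracted from the contradiction hypothesis $\mathcal{D}(r_*)\le 0$ through $\Xi=m\mathscr{C}>1$.

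Second, your endgame replaces $m$ by $p-1$ in $2-mz$. With $y=z\Gamma=\frac{3}{2}z(1+d_0\mathscr{C})$, $d_0=\frac{4}{3}-m$, this leaves only
$\mathcal{D}\ge\frac{3}{2}z\bigl(z(\frac{7}{3}-p)-d_0\mathscr{C}((p-1)z-2)\bigr)$.
Its positive part degenerates as $p\to\frac{7}{3}$, while the available bounds do not make $d_0\mathscr{C}$ small. Keeping the same $m$ gives the exact factorization $\mathcal{D}=\frac{3}{2}d_0z\bigl(\mathscr{C}(2-mz)+z\bigr)$, in which $d_0$ cancels. The contradiction then reduces to the transcendental inequality $\frac{1}{6}+\frac{4}{3}\zeta+(\zeta-1)\ln\zeta>0$ for $\zeta=4-2z(r_*)\in(0,1)$. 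It is not a polynomial inequality tight at $p=\frac{7}{3}$.
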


	\begin{proof} We divide the proof into 6 steps.

\vskip0.12in

\noindent{\bf Step 1:} Let $\cR = 2y/3 - z$, then
		\begin{align} \label{eqcR}
			\cR(r) > 0 \quad \text{ in } r > 0.
		\end{align}

		Let
		\begin{align}
			I_2 = \int_{\R^5}v^2dx; \quad M_{\sigma} = \frac{\int_{\R^5}v^{\sigma+1}dx}{I_2}, ~ \sigma \in \{p,q\}; \quad \omega_{p} = 1-\theta, ~ \omega_{q} = \theta.
		\end{align}
		Using the Nehari identity
		\begin{align*}
			\int_{\R^5}|\nabla v|^2dx + \eta I_2 = (1-\theta)\int_{\R^5}v^{p+1}dx + \theta \int_{\R^5}v^{q+1}dx
		\end{align*}
		and the Pohozaev identity
		\begin{align*}
			\frac{3}{2}\int_{\R^5}|\nabla v|^2dx + \frac{5}{2}\eta I_2 = \frac{5(1-\theta)}{p+1}\int_{\R^5}v^{p+1}dx + \frac{5\theta}{q+1}\int_{\R^5}v^{q+1}dx,
		\end{align*}
		we get
		\begin{align} \label{eqofeta}
			\eta = \sum_{\sigma \in \{p,q\}} \omega_\sigma \frac{7 - 3\sigma}{2(\sigma+1)}M_\sigma.
		\end{align}
		Since $v(0) = 1$ and $0 < v(r) < 1$ in $r > 0$, we have $0 < M_\sigma < 1$. By \eqref{eqextending}, one obtains
		\begin{align*}
			y = r^2h = h_0r^2 + o(r^2).
		\end{align*}
		Together with \eqref{expandingofz} we deduce that
		\begin{align*}
			\cR = \frac23 h_0r^2 - \frac{P(0) - 2h_0}{7} r^2 + o(r^2) = \frac{20h_0 - 3P(0)}{21}r^2 + o(r^2).
		\end{align*}
		Using $h_0 = -v_{rr}(0) = (1-\eta)/5$ and \eqref{eqofeta}, one gets
		\begin{align*}
			20h_0 - 3P(0) & = 4 - 4\eta - 3\sum_{\sigma}(\sigma - 1)\omega_\sigma \\
			& = \sum_\sigma (7 - 3\sigma)\omega_\sigma - 4\eta \\
			& = \sum_\sigma (7-3\sigma)\omega_\sigma \left(1 - \frac{2M_\sigma}{\sigma + 1}\right) \\
			& > \sum_\sigma (7-3\sigma)\omega_\sigma \left(1 - \frac{2}{\sigma + 1}\right) \\
			& > 0.
		\end{align*}
		Thus $\cR(r) > 0$ when $r > 0$ is small enough.

		Now we suppose by contradiction that $\cR$ has a zero point in $r > 0$. There exists $r_1 > 0$ such that
		\begin{align*}
			\cR(r) > 0 ~~ (0 < r < r_1), \quad \cR(r_1) = 0.
		\end{align*}
		We also have
		\begin{align*}
			\cR_s(r_1) = r_1\cR_r(r_1) \le 0.
		\end{align*}
		At $r = r_1$,
		\begin{align*}
			z(r_1) = \frac{2}{3}y(r_1).
		\end{align*}
		From $z < 2$ it follows $y(r_1) < 3$. Moreover,
		\begin{align*}
			T(r_1) = 5 - y(r_1)- z(r_1) > 0.
		\end{align*}
		The equation \eqref{ys} yields
		\begin{align*}
			y_s(r_1) = y(r_1)(2-z(r_1)) = \frac{2}{3}y(r_1)(3-y(r_1)).
		\end{align*}
		One can see that
		\begin{align*}
			\frac{J}{P} = \frac{\sum_\sigma \frac{\sigma-1}{\sigma +1}\omega_\sigma e^{-(\sigma - 1)\xi}}{\sum_\sigma (\sigma - 1)\omega_\sigma e^{-(\sigma - 1)\xi}} > \frac{3}{10}.
		\end{align*}
		By Lemma \ref{lemestimate},
		\begin{align*}
			y_s = ry_r = r^2k_r + r^2h > r^2J = r^2P \frac{J}{P} > \frac{3}{10}r^2P.
		\end{align*}
		At $r = r_1$ we obtain
		\begin{align*}
			r_1^2P(\xi(r_1)) < \frac{10}{3}y_s(r_1) = \frac{20}{9}y(r_1)(3-y(r_1)),
		\end{align*}
		and
		\begin{align*}
			\cR_s(r_1) & = \frac23 y_s(r_1) - z_s(r_1) \\
			& = \frac{4}{9}y(r_1)(3-y(r_1)) - r_1^2P(\xi(r_1)) + z(r_1)T(r_1) + y(r_1)(2-z(r_1)) \\
			& = \frac{20}{9}y(r_1)(3-y(r_1)) - r_1^2P(\xi(r_1)) \\
			& > 0,
		\end{align*}
		a contradiction. This proves \eqref{eqcR}.

\vskip0.12in
\noindent{\bf Step 2:} Take
		\begin{align} \label{def:W_0(r)}
			W_0(r) = \left(1 + \frac{h_0r^2}{3}\right)^{-3/2} = 1 - \frac{h_0}{2}r^2 + O(r^4) \in L^2(\R^5)
		\end{align}
		and define
		\begin{align*}
			M_\sigma^{(0)} := \frac{\int_{\R^5}W_0(|x|)^{\sigma +1}dx}{\int_{\R^5}W_0(|x|)^2dx},
		\end{align*}
		then
		\begin{align} \label{M>M0}
			M_\sigma = \frac{\int_{\R^5}v^{\sigma +1}dx}{\int_{\R^5}v^2dx} > M_\sigma^{(0)}, \quad \sigma \in \{p,q\}.
		\end{align}
Recall that $\xi = -\ln v(r)$ is strictly increasing in $r > 0$. Thus we can view $y$ as a function of $\xi$ and
		\begin{align}
			y_\xi = \frac{y_s}{\xi_s} = \frac{y_s}{r\xi_r} = \frac{y_s}{y} = 2 - z > 2 - \frac23 y.
		\end{align}
		Let
		\begin{align*}
			y_0(\xi) = 3\left(1 - e^{-2\xi/3}\right),
		\end{align*}
		which satisfies
		\begin{align*}
			\frac{dy_0}{d\xi} = 2 - \frac{2}{3}y_0, \quad y_0(0) = 0.
		\end{align*}
		It follows that
		\begin{align*}
			\frac{d}{d\xi}\left(e^{2\xi/3}(y - y_0)\right) > 0.
		\end{align*}
		When $\xi = 0$, $v(r) = 1$ and so $r = 0$. At this point, the value of $y$ is $0$. Therefore, we obtain
		\begin{align*}
			y > y_0 \quad \text{ for all } \xi > 0.
		\end{align*}
Let $r(\xi)$ and $r_0(\xi)$ be determined by
		\begin{align*}
			v(r(\xi)) = e^{-\xi}, \quad W_0(r_0(\xi)) = e^{-\xi}
		\end{align*}
		respectively. Then
		\begin{align*}
			\frac{d}{d\xi} \ln r(\xi) = \frac{r_\xi(\xi)}{r(\xi)} = \frac{1}{r(\xi)\xi_r(r(\xi))} = \frac{1}{y(\xi)}.
		\end{align*}
		Moreover,
		\begin{align*}
			2\ln r_0(\xi) = \ln 3(e^{2\xi/3} - 1) - \ln h_0,
		\end{align*}
		and we have
		\begin{align*}
			\frac{d}{d\xi} \ln r_0(\xi) = \frac{e^{2\xi/3}}{3(e^{2\xi/3} - 1)} = \frac{1}{y_0(\xi)}.
		\end{align*}
		Then, one gets
		\begin{align*}
			\frac{d}{d\xi} \ln \frac{r(\xi)}{r_0(\xi)} = \frac{1}{y(\xi)} - \frac{1}{y_0(\xi)} < 0 \quad \text{ in } \xi > 0.
		\end{align*}
		By
		\begin{align*}
			1 - \xi + O(\xi^2) = e^{-\xi} = v(r(\xi)) = 1 - \frac{h_0}{2}r(\xi)^2 + O(r(\xi)^4),
		\end{align*}
		we derive
		\begin{align*}
			r(\xi) \sim \sqrt{\frac{2\xi}{h_0}} \quad \text{ as } \xi \to 0.
		\end{align*}
		Similarly, we have
		\begin{align*}
			r_0(\xi) \sim \sqrt{\frac{2\xi}{h_0}} \quad \text{ as } \xi \to 0.
		\end{align*}
		Thus
		\begin{align*}
			\frac{r(\xi)}{r_0(\xi)} \to 1 \quad \text{ as } \xi \to 0.
		\end{align*}
Define
		\begin{align*}
			\mathcal{M}_{v}(\kappa) := \int_{\R^5}v^\kappa dx, ~~ \mathcal{M}_{W_0}(\kappa) := \int_{\R^5}W_0^\kappa dx, \quad \kappa \ge 2.
		\end{align*}
		By the layer cake representation, see for example \cite{LL}, we deduce that
		\begin{align*}
			& \mathcal{M}_{v}(\kappa) = \kappa\int_0^\infty e^{-\kappa\xi}\big|\big\{x \in \R^5: v(x) > e^{-\xi} \big\}\big|d\xi, \\
			& \mathcal{M}_{W_0}(\kappa) = \kappa\int_0^\infty e^{-\kappa\xi}\big|\big\{x \in \R^5: W_0(x) > e^{-\xi} \big\}\big|d\xi.
		\end{align*}
		We introduce the probability measure
		\begin{align*}
			d\mu_\kappa(\xi) = \frac{e^{-\kappa\xi}\big|\big\{x \in \R^5: W_0(x) > e^{-\xi} \big\}\big|d\xi}{\int_0^\infty e^{-\kappa t}\big|\big\{x \in \R^5: W_0(x) > e^{-t} \big\}\big|dt}.
		\end{align*}
		Let
		\begin{align*}
			a(\xi) := \frac{\big|\big\{x \in \R^5: v(x) > e^{-\xi} \big\}\big|}{\big|\big\{x \in \R^5: W_0(|x|) > e^{-\xi} \big\}\big|} = \left(\frac{r(\xi)}{r_0(\xi)}\right)^5.
		\end{align*}
		The function $a(\xi)$ is strictly decreasing in $\xi > 0$ and $a$ extends continuously to $\xi = 0$ with $a(0) = 1$. By the strict Chebyshev integral inequality, see \cite{Jak}, we get
		\begin{align*}
			\text{Cov}_{\mu_\kappa}(\xi,a(\xi)) = \int \xi a(\xi)d\mu_\kappa - \left(\int \xi d\mu_\kappa\right)\left(\int a(\xi) d\mu_\kappa\right) < 0.
		\end{align*}
		Direct computations yield
		\begin{align*}
			\frac{d}{d\kappa}\ln \mathcal{M}_{W_0}(\kappa) = \frac{1}{\kappa} - \int \xi d\mu_\kappa, \quad \frac{d}{d\kappa}\ln \mathcal{M}_{v}(\kappa) = \frac{1}{\kappa} - \frac{\int \xi a(\xi) d\mu_\kappa}{\int a(\xi) d\mu_\kappa}.
		\end{align*}
		Thus
		\begin{align*}
			\frac{d}{d\kappa}\ln \mathcal{M}_{v}(\kappa) > \frac{d}{d\kappa}\ln \mathcal{M}_{W_0}(\kappa).
		\end{align*}
		Integrating for $\kappa \in [2,1+\sigma]$, we get \eqref{M>M0}.  Define
		\begin{align*}
			\bar\xi_0 := \frac{\int_{\R^5}W_0^2(-\ln W_0)dx}{\int_{\R^5}W_0^2dx}.
		\end{align*}
		Let $t = h_0 r^2/3$, then
		\begin{align*}
			W_0^2 = (1+t)^{-3}, \quad -\ln W_0 = \frac{3}{2}\ln(1+t),
		\end{align*}
		and
		\begin{align*}
			\bar\xi_0 = \frac{3}{2}\frac{\int_0^\infty t^{3/2}(1+t)^{-3}\ln(1+t)dt}{\int_0^\infty t^{3/2}(1+t)^{-3}dt} = \frac{9}{4} + 3\ln 2.
		\end{align*}

\vskip0.12in
\noindent{\bf Step 3:} Let
		\begin{align*}
			\Gamma = \frac{2S}{P}, \quad d_0 = \frac{4}{3} - m > 0, \quad \mathscr{C} = \frac{2\Gamma/3 - 1}{d_0},
		\end{align*}
		then
		\begin{align} \label{C<}
			\mathscr{C} < \bar\xi_0 + \frac{1}{2} - \xi.
		\end{align}
We write
		\begin{align*}
			W_\sigma = \omega_\sigma e^{-(\sigma-1)\xi}, \quad \sigma \in \{p,q\},
		\end{align*}
		and define
		\begin{align*}
			a_\sigma := (\sigma - 1)\left(\frac{7}{3} - \sigma\right)W_\sigma, \quad \mathscr{C}_\sigma := \frac{1}{\sigma-1}\left(1 - \frac{2M_\sigma}{\sigma + 1}e^{(\sigma - 1)\xi}\right).
		\end{align*}
		Note that
		\begin{align*}
			d_0 = \frac{4}{3} - m = \frac{4}{3} - \frac{\sum_\sigma(\sigma - 1)^2W_\sigma}{P} = \frac{\sum_\sigma a_\sigma}{P}.
		\end{align*}
		Moreover, using \eqref{eqofeta} we get
		\begin{align*}
			\frac{2}{3}\Gamma -1 = \frac{\sum_\sigma a_\sigma \mathscr{C}_\sigma}{P}.
		\end{align*}
		Thus
		\begin{align} \label{eqofscrC}
			\mathscr{C} = \frac{2\Gamma/3 - 1}{d_0} = \frac{\sum_\sigma a_\sigma \mathscr{C}_\sigma}{d_0P} = \frac{\sum_\sigma a_\sigma \mathscr{C}_\sigma}{\sum_\sigma a_\sigma}.
		\end{align}
Let $\xi_0 = - \ln W_0$ and
		\begin{align*}
			d\mu_0 := \frac{W_0^2dx}{\int_{\R^5}W_0^2dx},
		\end{align*}
		then
		\begin{align*}
			\bar\xi_0 = \int \xi_0 d\mu_0, \quad M_\sigma^{(0)} = \int e^{-(\sigma - 1)\xi_0}d\mu_0.
		\end{align*}
		Using Jensen's inequality we obtain
		\begin{align*}
			\ln\left(\int e^{-(\sigma - 1)\xi_0}d\mu_0\right) \ge -(\sigma - 1)\int \xi_0d\mu_0 = -(\sigma - 1)\bar \xi_0,
		\end{align*}
		namely
		\begin{align*}
			M_\sigma^{(0)} \ge e^{-(\sigma - 1)\bar \xi_0}.
		\end{align*}
		By \eqref{M>M0} and
		\begin{align*}
			\frac{2}{\sigma + 1} = \frac{1}{1 + (\sigma-1)/2} \ge e^{-(\sigma - 1)/2},
		\end{align*}
		we derive that
		\begin{align*}
			\frac{2M_\sigma}{\sigma + 1}e^{(\sigma - 1)\xi} > e^{-(\sigma - 1)(\bar\xi_0 + 1/2 - \xi)}.
		\end{align*}
		Setting $\tau = (\sigma - 1)(\bar\xi_0 + 1/2 - \xi)$ and using
		\begin{align*}
			1 - e^{-\tau} \le \tau,
		\end{align*}
		we conclude that
		\begin{align*}
			\mathscr{C}_\sigma = \frac{1}{\sigma-1}\left(1 - \frac{2M_\sigma}{\sigma + 1}e^{(\sigma - 1)\xi}\right) < \bar\xi_0 + \frac12 - \xi.
		\end{align*}
		Together with \eqref{eqofscrC} we obtain \eqref{C<}.

		\vskip0.12in
\noindent{\bf Step 4:} Recall that $\G(r) > 0$ when $r > 0$ is small enough. From this step we suppose by contradiction that $\G$ has a zero point in $r > 0$. Thus there exists $r_* > 0$ such that
		\begin{align*}
			\G(r) > 0 ~~ (0 < r < r_*), \quad \G(r_*) = 0.
		\end{align*}
		We also have
		\begin{align*}
			\G_s(r_*) = r_*\G_r(r_*) \le 0.
		\end{align*}
At $r = r_*$, we have that
		\begin{align}
			& r_*^2P(\xi(r_*)) = 2z(r_*)T(r_*) = 2z(r_*)\frac{S(\xi(r_*))}{h(r_*)}, \quad T(r_*) > 0, \\
			& y(r_*) = r_*^2h(r_*) = \frac{2z(r_*)S(\xi(r_*))}{P(\xi(r_*))} = z(r_*)\Gamma(\xi(r_*)) \label{eqy=zgamma}.
		\end{align}
		Recall that
		\begin{align*}
			\mathcal{D} = y(2 - mz) + z(2z - 3).
		\end{align*}
		We have proved
		\begin{align*}
			\G_s(r_*) = 2T(r_*)\mathcal{D}(r_*).
		\end{align*}
		By
		\begin{align*}
			J - h < k_r = rh_r + h = h(1 - z),
		\end{align*}
		we get
		\begin{align*}
			J < h(2-z).
		\end{align*}
		At $r = r_*$,
		\begin{align*}
			\frac{J(\xi(r_*))}{h(r_*)} = \frac{J(\xi(r_*))}{P(\xi(r_*))}\frac{P(\xi(r_*))}{h(r_*)} = \frac{J(\xi(r_*))}{P(\xi(r_*))}\frac{r_*^2P(\xi(r_*))}{y(r_*)} = \frac{J(\xi(r_*))}{P(\xi(r_*))}\frac{2z(r_*)T(r_*)}{y(r_*)}.
		\end{align*}
		Using $J/P > 3/10$, one gets
		\begin{align*}
			\frac{3z(r_*)T(r_*)}{5y(r_*)} < 2 - z(r_*),
		\end{align*}
		namely
		\begin{align*}
			3z(r_*)(5 - z(r_*)) < 2y(r_*)(5 - z(r_*))
		\end{align*}
		where we have used $T = 5 - y -z$. Since $z < 2$, we derive that
		\begin{align*}
			y(r_*) > \frac{3}{2}z(r_*).
		\end{align*}
		If $m(\xi(r_*))z(r_*) \le 2$, then
		\begin{align*}
			\mathcal{D}(r_*) & = y(r_*)(2 - m(\xi(r_*))z(r_*)) + z(r_*)(2z(r_*) - 3) \\
			& \ge \frac{3}{2}z(r_*)(2 - m(\xi(r_*))z(r_*)) + z(r_*)(2z(r_*) - 3) \\
			& = \frac{z(r_*)^2}{2}(4 - 3m(\xi(r_*))) \\
			& > 0,
		\end{align*}
		a contradiction! Thus
		\begin{align} \label{mz>2}
			m(\xi(r_*))z(r_*) > 2.
		\end{align}
		Repeating the argument in Case 2 of the proof of Proposition \ref{propG>01}, we obtain
		\begin{align*}
			m(\xi(r_*)) > p_{\text{alg}}(5) - 1 = \frac{2\sqrt{14} - 1}{5}.
		\end{align*}
By \eqref{eqy=zgamma} and
		\begin{align*}
			\Gamma = \frac{3}{2}(1 + d_0\mathscr{C}),
		\end{align*}
		we have
		\begin{align*}
			\mathcal{D}(r_*) & = z(r_*)\Gamma(\xi(r_*))(2 - m(\xi(r_*))z(r_*)) + z(r_*)(2z(r_*) - 3) \\
			& = z(r_*) \left(\frac{3}{2}(1 + d_0(\xi(r_*))\mathscr{C}(\xi(r_*)))(2 - m(\xi(r_*))z(r_*)) + 2z(r_*) - 3\right) \\
			& = \frac{3}{2}d_0(\xi(r_*))z(r_*)\left(\mathscr{C}(\xi(r_*))(2-m(\xi(r_*))z(r_*)) + z(r_*)\right).
		\end{align*}
		Since $\mathcal{D}(r_*) \le 0$, it holds that
		\begin{align} \label{z+Cle0}
			\mathscr{C}(\xi(r_*))(2-m(\xi(r_*))z(r_*)) + z(r_*) \le 0.
		\end{align}
		Together with \eqref{mz>2} we get
		\begin{align} \label{scrC>0atr*}
			\mathscr{C}(\xi(r_*)) \ge \frac{z(r_*)}{m(\xi(r_*))z(r_*) - 2} > 0.
		\end{align}
		Let
		\begin{align*}
			\Xi(r) := m\mathscr{C}.
		\end{align*}
		Then
		\begin{align*}
			\Xi(r_*) = m(\xi(r_*))\mathscr{C}(\xi(r_*)) \ge \frac{m(\xi(r_*))z(r_*)}{m(\xi(r_*))z(r_*) - 2}.
		\end{align*}
		Since
		\begin{align*}
			2 < m(\xi(r_*))z(r_*) < \frac{8}{3},
		\end{align*}
		one gets
		\begin{align*}
			\Xi(r_*) > 4.
		\end{align*}
		We further prove
		\begin{align*}
			\Xi(r) > 1 \quad \text{ for } 0 < r \le r_*.
		\end{align*}
		Suppose by contradiction that there exists $r^* \in (0,r_*)$ such that
		\begin{align*}
			\Xi(r) > 1 ~~ (r^* < r \le r_*); \quad \Xi(r^*) = 1.
		\end{align*}
		We also have
		\begin{align*}
			\Xi_s(r^*) = r^*\Xi_r(r^*) \ge 0.
		\end{align*}
		Direct computations yield
		\begin{align*}
			m_s = -y(p-1-m)(m-q+1) < 0, \quad \Gamma_s = y(m\Gamma -2).
		\end{align*}
		Together with
		\begin{align*}
			\Gamma = \frac{3}{2}(1 + d_0\mathscr{C}), \quad \Xi = m\mathscr{C}, \quad d_0 = \frac{4}{3} - m,
		\end{align*}
		we get
		\begin{align*}
			\Xi_s = y\left(m(\Xi - 1) - \frac{4/3(p-1-m)(m-q+1)}{md_0}\Xi\right).
		\end{align*}
		Particularly,
		\begin{align*}
			\Xi_s(r^*) < 0,
		\end{align*}
		a contradiction! Observe that
		\begin{align} \label{mg-2>0}
			m\Gamma - 2 = \frac{3}{2}d_0(\Xi-1) > 0 \quad \text{ for } 0 < r \le r_*.
		\end{align}
		Since $m_s = rm_r < 0$, we also have
		\begin{align} \label{m>23}
			m(\xi(r)) \ge m(\xi(r_*)) > p_{\text{alg}}(5) - 1 = \frac{2\sqrt{14} - 1}{5} > \frac23 \quad \text{ for } 0 < r \le r_*.
		\end{align}

	\vskip0.12in
\noindent{\bf Step 5:} In this step we aim to prove
		\begin{align} \label{talenti}
			\xi > -\frac{3}{2}\ln\left(1 - \frac{z}{2}\right) \quad \text{ at }r = r_*.
		\end{align}
We first prove
		\begin{align} \label{cRs>0}
			\cR_s(r) > 0 \quad \text{ for } 0 < r < r_*.
		\end{align}
		Noticing
		\begin{align*}
			\cR_s = r\cR_r = 2cr^2 + o(r^2), \quad c = \frac{20h_0 - 3P(0)}{21} > 0,
		\end{align*}
		we have $\cR_s > 0$ when $r > 0$ is sufficiently small. Arguing by contradiction, we assume that there exists $r_1 \in (0,r_*)$ such that
		\begin{align*}
			\cR_s(r) > 0 ~~ (0 < r < r_1), \quad \cR_s(r_1) = 0.
		\end{align*}
		We also have
		\begin{align*}
			\cR_{ss}(r_1) \le 0.
		\end{align*}
Let
		\begin{align*}
			\pi = \frac{y}{z}.
		\end{align*}
		At $r = r_1 < r_*$,
		\begin{align*}
			\G(r_1) = r_1^2P(\xi(r_1))\left(1 - \frac{z(r_1)\Gamma(\xi(r_1))}{y(r_1)}\right) > 0,
		\end{align*}
		namely
		\begin{align*}
			\pi(r_1) > \Gamma(\xi(r_1)).
		\end{align*}
		Together with \eqref{mg-2>0} we get
		\begin{align} \label{eqofpi>}
			\pi(r_1) > \frac{2}{m(\xi(r_1))} > \frac{3}{2}.
		\end{align}
		Set $X = r^2P$. At $\cR_s = 0$, combining
		\begin{align*}
			\cR_s = \frac{5}{3}y(2-z) - X + zT = 0
		\end{align*}
		and
		\begin{align*}
			T = \frac{X\Gamma}{2y},
		\end{align*}
		we solve that
		\begin{align}
			& X = \frac{10z\pi^2(2-z)}{3(2\pi - \Gamma)}, \\
			& T = \frac{5\pi \Gamma(2-z)}{3(2\pi - \Gamma)}, \label{solvingT}
		\end{align}
		which are well-defined at $r = r_1$ since
		\begin{align} \label{2pi-G>0}
			2\pi - \Gamma > \Gamma > 0 \quad \text{ at } r = r_1.
		\end{align}
		Then, direct computations yield
		\begin{align*}
			\cR_{ss}(r_1) = \frac{10z(r_1)\pi(r_1)(2-z(r_1))}{9(2\pi(r_1) - \Gamma(\xi(r_1)))}\mathcal{B}(r_1),
		\end{align*}
		where
		\begin{align*}
			\mathcal{B} = \Gamma(2\pi - 3) + z \left(3\Gamma + \pi\left((3m-2)\pi - 6\right)\right).
		\end{align*}
If
		\begin{align*}
			3\Gamma + \pi\left((3m-2)\pi - 6\right) \ge 0 \quad \text{ at } r = r_1,
		\end{align*}
		using \eqref{eqofpi>}, $\Gamma > 0$ and $z > 0$, we obtain $\mathcal{B}(r_1) > 0$, a contradiction! If
		\begin{align*}
			3\Gamma + \pi\left((3m-2)\pi - 6\right) < 0 \quad \text{ at } r = r_1,
		\end{align*}
		by \eqref{mg-2>0}, \eqref{eqofpi>} and $z < 2$, at $r = r_1$ one gets
		\begin{align} \label{B1}
			\mathcal{B} & > \Gamma(2\pi - 3) + 2\left(3\Gamma + \pi\left((3m-2)\pi - 6\right)\right) \\
			& = \Gamma(2\pi + 3) + 2(3m-2)\pi^2 - 12\pi \\
			& > \frac{2}{m}(2\pi + 3) + 2(3m-2)\pi^2 - 12\pi \\
			&= \frac{2}{m}(m\pi - 1)\left((3m-2)\pi - 3\right).
		\end{align}
		Combining \eqref{solvingT} and
		\begin{align*}
			T = 5 - y - z = 5 - z(\pi + 1),
		\end{align*}
		we solve that
		\begin{align*}
			(2-z)\left(6\pi^2 + 6\pi - \Gamma(8\pi + 3)\right) = 3(2\pi - \Gamma)(2\pi - 3).
		\end{align*}
		Since $z < 2$, together with \eqref{eqofpi>} and \eqref{2pi-G>0} we deduce that
		\begin{align*}
			6\pi^2 + 6\pi - \Gamma(8\pi + 3) > 0 \quad \text{ at } r = r_1.
		\end{align*}
		Then we get
		\begin{align*}
			z = \frac{5\left(6\pi - \Gamma(2\pi + 3)\right)}{6\pi^2 + 6\pi - \Gamma(8\pi + 3)} \quad \text{ at } r = r_1.
		\end{align*}
		Recalling $z > 0$, we have
		\begin{align*}
			\Gamma < \frac{6\pi}{2\pi + 3} \quad \text{ at } r = r_1.
		\end{align*}
		Together with \eqref{mg-2>0} one gets
		\begin{align*}
			\frac{2}{m} < \frac{6\pi}{2\pi + 3} \quad \text{ at } r = r_1,
		\end{align*}
		namely
		\begin{align} \label{B2}
			(3m - 2)\pi > 3 \quad \text{ at } r = r_1.
		\end{align}
		By \eqref{m>23} we also have
		\begin{align} \label{B3}
			m\pi - 1 > \frac{3m}{3m-2} - 1 > 0 \quad \text{ at } r = r_1.
		\end{align}
		Combining \eqref{B1}, \eqref{B2} and \eqref{B3}, we conclude that $\mathcal{B}(r_1) > 0$, a contradiction! Thus \eqref{cRs>0} holds true.

		Let
		\begin{align*}
			\mathcal{Q} := \xi + \frac{3}{2}\ln\left(1 - \frac{z}{2}\right).
		\end{align*}
		We have
		\begin{align*}
			\mathcal{Q}_s = y - \frac{3z_s}{2(2-z)}.
		\end{align*}
		Since
		\begin{align*}
			\cR_s = \frac{2}{3}y(2-z) - z_s,
		\end{align*}
		we get
		\begin{align*}
			\mathcal{Q}_s = \frac{3\cR_s}{2(2-z)} > 0 \quad \text{ in } r \in (0,r_*),
		\end{align*}
		yielding
		\begin{align*}
			\mathcal{Q}(r_*) > \mathcal{Q}(0) =0.
		\end{align*}
		This is \eqref{talenti}.

\vskip0.12in
\noindent{\bf Step 6:} We find a contradiction in this step to complete the proof.

		By \eqref{C<}, \eqref{scrC>0atr*} and \eqref{talenti}
		\begin{align*}
			0 < \mathscr{C} < \bar\xi_0 + \frac{1}{2} + \frac{3}{2}\ln\left(1 - \frac{z}{2}\right) \quad \text{ at }r = r_*.
		\end{align*}
		Let
		\begin{align*}
			\mathscr{C}_0 = \bar\xi_0 + \frac{1}{2} + \frac{3}{2}\ln\left(1 - \frac{z(r_*)}{2}\right) > 0.
		\end{align*}
		Then,
		\begin{align} \label{z+C>}
			z + \mathscr{C}(2-mz) > z + \mathscr{C}\left(2-\frac{4}{3}z\right) > z + \mathscr{C}_0\left(2-\frac{4}{3}z\right) \quad \text{ at }r = r_*,
		\end{align}
		where we have used $m < 4/3$ and
		\begin{align*}
			z(r_*) > \frac{2}{m(\xi(r_*))} > \frac{3}{2}.
		\end{align*}
		Let
		\begin{align*}
			\zeta = 4\left(1 - \frac{z(r_*)}{2}\right) \in (0,1).
		\end{align*}
		Recalling
		\begin{align*}
			\bar\xi_0 = \frac{9}{4} + 3\ln 2,
		\end{align*}
		we get
		\begin{align*}
			\mathscr{C}_0 = \frac{11}{4} + 3\ln 2 + \frac{3}{2}\ln \frac{\zeta}{4} = \frac{11}{4} + \frac{3}{2}\ln \zeta.
		\end{align*}
		Thus
		\begin{align*}
			z(r_*) + \mathscr{C}_0\left(2-\frac{4}{3}z(r_*)\right) = 2 - \frac{\zeta}{2} + \left(\frac{11}{4} + \frac{3}{2}\ln \zeta\right)\frac{2}{3}(\zeta - 1) = \frac{1}{6} + \frac{4}{3}\zeta + (\zeta - 1)\ln \zeta > 0.
		\end{align*}
		Together with \eqref{z+C>}, we obtain
		\begin{align*}
			z + \mathscr{C}(2-mz) > 0 \quad \text{ at } r = r_*,
		\end{align*}	
		contradicting \eqref{z+Cle0}. The proof is complete.
	\end{proof}

\vskip0.36in

	\section{The Morse index of any positive solution is $1$} \label{secMorse}

	In this section we complete the proof of Theorem \ref{thmMorse}, that is, for a solution $u$ of \eqref{doublepower}, we prove $m(u) = 1$. Equivalently, it suffices to prove $m(v) = 1$ where $v$ is the rescaled solution given by \eqref{scalingv}.

	We first prove $m_{rad}(v) \le 1$.
	
	When $N \ge 3$, by arguments in Section \ref{sectransformation}, showing $\la_{\text{P}}(\rho) \ge N-1$ will complete the proof of this step where $\la_{\text{P}}(\rho)$ is defined in \eqref{def:laPrho}. By Proposition \ref{propinfH}, this is to prove $\inf \sigma(\widetilde{H}) \ge N-1$. Let $\Phi$ be the test function given in \eqref{testfunction}. Using the identity \eqref{HPhi}, for any $f \in C_c^\infty(\R)$ we get
	\begin{align*}
		\big\langle (\widetilde{H}- (N-1))f,f\big\rangle_{L^2(\R)} = \int_{\R}\Phi^2\left|\left(\frac{f}{\Phi}\right)_s\right|^2 ds + \int_{\R}\G f^2 ds \ge \int_{\R}\G f^2 ds.
	\end{align*}
	Under conditions of Theorem \ref{thmMorse}, by Proposition \ref{propG>01}, Proposition \ref{propG>02} and Proposition \ref{propG>03}, we have $\G > 0$. Thus
	\begin{align*}
		\inf \sigma(\widetilde{H}) \ge N-1.
	\end{align*}
When $N = 2$, we still have $\G > 0$. By \eqref{ys}, \eqref{zs} and \eqref{def:G}, we get
	\begin{align*}
		r(2y+z)_r = (2y + z)_s = y(2-z) + r^2P - zT = y(2-z) + \frac{r^2P + \G}{2} > 0.
	\end{align*}
	As $r \to 0$, $2y + z \to 0$. As $r \to \infty$, by Lemma \ref{lemlimitofk} we have $y \sim \sqrt{\eta}r \to \infty$. Thus there exists a unique $r_0 > 0$ such that
	\begin{align*}
		2y(r_0) + z(r_0) = N-1.
	\end{align*}
	Take a function $g$ such that $g_s = 1/k > 0$ and $g(r_0) = 0$. It is clear that
	\begin{align*}
		g < 0 ~~ \text{ in } r < r_0, \quad\quad g > 0 ~~ \text{ in } r > r_0.
	\end{align*}
	Moreover, observing
	\begin{align*}
		\frac{d}{ds}\ln \left(\frac{\rho}{k}\right) = \frac{\rho_s}{\rho} - \frac{rk_r}{k} = N-1 -2y - z,
	\end{align*}
	we deduce that $(\rho g_s)_s(s_0) = 0$ with $s_0 = \ln r_0$, and so
	\begin{align*}
		\left(L_\rho - (N-1)\right)g(r_0) = 0,
	\end{align*}
	where $L_\rho$ is defined in \eqref{def:Lrho}. Note that
	\begin{align*}
		g_{ss} = \left(\frac{1}{k}\right)_s = -\frac{rk_r}{k^2}=-\frac{r^2h_r + k}{k^2} = (z-1)g_s.
	\end{align*}
	Then we have
	\begin{align*}
		\left(\left(L_\rho - (N-1)\right)g\right)_s = \left(-(N-1-2y-z)g_s - (N-1)g\right)_s = \G g_s > 0.
	\end{align*}
	Let $W = \phi g$. Using \eqref{eqofphi} and computing directly, we obtain
	\begin{align*}
		L_{rad}W = \frac{\phi}{r^2}\left(L_\rho - (N-1)\right)g.
	\end{align*}
	Therefore,
	\begin{align*}
		LW < 0 ~~ \text{ in } 0 < |x| < r_0, \quad\quad LW > 0 ~~ \text{ in } |x| > r_0.
	\end{align*}
Recall that
	\begin{align*}
		\phi = h_0r + O(r^3), \quad \phi_r = h_0 + O(r^2).
	\end{align*}
	By $k = h_0r + O(r^3)$ we have
	\begin{align*}
		g_r = r^{-1}g_s = \frac{1}{rk} = \frac{1}{h_0r^2} + O(1), \quad g = -\frac{1}{h_0r} + C + O(r).
	\end{align*}
	Then one can see
	\begin{align*}
		W_r = \phi_r g + \phi g_r = O(1), \quad W = -1 + O(r).
	\end{align*}
	So $W$ belongs to $H_{loc}^1$. As $r \to \infty$, $g = O(\ln r)$ and $\phi$ decays exponentially, implying that $W$ also decays exponentially.
	
	For
	\begin{align*}
		f \in \mathcal{H} := \big\{f \in H_{rad}^1(\R^N): f(r_0) = 0\big\},
	\end{align*}
	using local Picone identity avoiding $0$ and $r_0$ for smooth radial functions having compact support first and then taking $H^1$-approximation, we have
	\begin{align*}
		Q_L(f) = & \int_{\{|x| < r_0\}}\left(|\nabla f|^2 + \eta f^2 - (1-\theta)pv^{p-1}f^2 - \theta q v^{q-1}f^2\right)dx \\
		& + \int_{\{|x| > r_0\}}\left(|\nabla f|^2 + \eta f^2 - (1-\theta)pv^{p-1}f^2 - \theta q v^{q-1}f^2\right)dx \\
		= & \int_{\{|x| < r_0\}}W^2\left|\nabla \left(\frac{f}{W}\right)\right|^2dx + \int_{\{|x| < r_0\}}\frac{LW}{W}f^2dx \\
		& + \int_{\{|x| > r_0\}}W^2\left|\nabla \left(\frac{f}{W}\right)\right|^2dx + \int_{\{|x| > r_0\}}\frac{LW}{W}f^2dx \\
		\ge 0.
	\end{align*}
	Since the codimension of $\mathcal{H}$ is $1$, we obtain $m_{rad}(v) \le 1$.

	Then, using the decomposition in terms of spherical harmonics, it is readily seen that $m(v) \le 1$. Finally, noticing
	\begin{align*}
		Q_L(v) & = \int_{\R^N}\left(|\nabla v|^2 + \eta v^2 - (1-\theta)pv^{p+1} - \theta q v^{q+1}\right)dx \\
		& = -\int_{\R^N}\left((1-\theta)(p-1)v^{p+1} + \theta (q-1) v^{q+1}\right)dx \\
		& < 0,
	\end{align*}
	we conclude that $m(v) = 1$, completing the proof.

	\section{Positive solutions having Morse index $1$ are nondegenerate} \label{secnondegenerate}

	In this section, we focus on the proof of Theorem \ref{thmnondegenerate}. Let $u$ be a positive radial solution of \eqref{doublepower} having Morse index $1$ and let $v$ be given in \eqref{scalingv}. Equivalently, we will prove $v$ is nondegenerate in $H_{rad}^1(\R^N)$ and $H^1(\R^N)$. Noticing
	\begin{align*}
		Q_{L}(v) = -\int_{\R^N}\left((1-\theta)(p-1)v^{p+1} + \theta(q-1)v^{q+1}\right)dx < 0,
	\end{align*}
	we have
	\begin{align*}
		1 \le m_{rad}(v) \le m(v) = 1,
	\end{align*}
	namely $m_{rad}(v) = m(v) = 1$.
	
	Suppose by contradiction that $v$ is degenerate in $H_{rad}^1(\R^N)$. There exists a radial function $0 \neq w \in H^1(\R^N)$ such that $Lw = 0$. Note that $\sigma_{ess}(L) = [\eta,\infty)$. So $0$ is the second radial eigenvalue and $w(r)$ has exactly one zero point $r = r_0 > 0$. We assume that
	\begin{align*}
		w(r) > 0 ~~ (0 < r < r_0),\quad w(r) < 0 ~~ (r > r_0).
	\end{align*}
	Define
	\begin{align*}
		\chi(r) := \frac{w(r)}{v(r)}.
	\end{align*}

	\begin{lemma}
		It holds that
		\begin{align}
			\chi_r(r) < 0 \quad \text{ in } r > 0.
		\end{align}
	\end{lemma}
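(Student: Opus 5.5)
We have to show that $\chi = w/v$ is strictly decreasing. The idea is to find the ODE satisfied by $\chi$, rewrite it in divergence form, and use a monotone quantity built from the nonlinearity $F$.

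\textbf{The equation for $\chi$.} We have $Lw=0$, and $v$ satisfies $-\Delta v+\eta v=F(\xi)v$ with $\xi=-\ln v$. Writing $w=\chi v$, a direct computation gives
\begin{align*}
-\frac{1}{r^{N-1}v^2}\left(r^{N-1}v^2\chi_r\right)_r = \left((1-\theta)(p-1)v^{p-1}+\theta(q-1)v^{q-1}\right)\chi = P(\xi)\,\chi .
\end{align*}
Indeed, $pv^{p-1}-v^{p-1}=(p-1)v^{p-1}$, and similarly for $q$. Integrating from $0$, and using that the boundary term vanishes because $\chi$ is smooth at the origin, we get
\begin{align*}
r^{N-1}v^2\chi_r(r) = -\int_0^r t^{N-1}v^2 P\,\chi\,dt .
\end{align*}
Since $\chi>0$ on $(0,r_0)$ and $P>0$, this already gives $\chi_r<0$ on $(0,r_0]$.

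\textbf{The region $r>r_0$.} Here $\chi<0$, so the integral $\int_0^r t^{N-1}v^2P\chi\,dt$ decreases. I would first show that it is still positive for every $r$, i.e. that $\Psi(r):=r^{N-1}v^2\chi_r<0$ for all $r$. Suppose instead that $\Psi$ vanishes at some first point $R_1>r_0$. On $(r_0,R_1)$, $\chi$ is then negative and decreasing, and $\chi_r(R_1)=0$.

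To reach a contradiction, I would compare $\chi$ with the explicit solution generated by scaling and use a Sturm argument. The function $\psi=rv_r+\frac{2}{p-1}v$ does not solve $L\psi=0$, because $q\ne p$. Instead I would use the monotonicity of $m=-P_\xi/P$, or equivalently the fact that $P(\xi(r))$ is decreasing in $r$, which holds because $P_\xi<0$ and $\xi$ is increasing. The argument for $r>R_1$ then runs as follows.
\begin{itemize}
\item Beyond $R_1$, the right-hand side $-P\chi$ is positive, so $\chi_r$ becomes positive and $\chi$ increases.
\item Meanwhile $\chi v=w\to0$ at infinity, and the decay of $w$ must match that of $v$ up to polynomial factors, because both decay like $e^{-\sqrt\eta r}r^{-(N-1)/2}$.
\item One must show this is compatible only with $\chi$ decreasing. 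Using Lemma~\ref{lemlimitofk} and the asymptotics of $w$, the limit $\lim\chi=c\neq0$ exists with $\chi_r\to0$.
\end{itemize}
Since $\Psi(\infty)=-\int_0^\infty t^{N-1}v^2P\chi\,dt$, the idea is to show that this integral equals $0$ via the identity obtained by pairing $Lw=0$ against $v$:
\begin{align*}
\int \bigl((1-\theta)(p-1)v^{p}+\theta(q-1)v^q\bigr)w\,dx=0 .
\end{align*}
This follows from $\int (Lv)w=\int v(Lw)=0$ together with $Lv=-Pv$. Thus $\Psi(0)=\Psi(\infty)=0$. On $(r_0,\infty)$ we have $\Psi'=-r^{N-1}v^2P\chi>0$, so $\Psi$ is strictly increasing there. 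Since $\Psi\to0$ at infinity, it follows that $\Psi<0$ on $(r_0,\infty)$. Combined with $\Psi<0$ on $(0,r_0]$, this gives $\chi_r<0$ everywhere.

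\textbf{Main obstacle.} The delicate step is justifying $\Psi(r)\to0$ as $r\to\infty$, that is, the integrability and vanishing of the boundary terms in the Green identity. This needs the exponential decay of $w$, $w_r$, $v$, $v_r$. It also needs the observation that $r^{N-1}v^2\chi_r=r^{N-1}(vw_r-wv_r)$, which decays exponentially. With this observation the rest is straightforward, so the contradiction argument at $R_1$ is not needed.
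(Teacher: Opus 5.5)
Your final argument is correct and is essentially the paper's proof. Set $\Psi=r^{N-1}v^2\chi_r$, which satisfies $\Psi'=-r^{N-1}v^2P\chi$ and vanishes at $r=0$ and as $r\to\infty$. Integrating from $0$ gives $\chi_r<0$ on $(0,r_0]$, and $\Psi'>0$ on $(r_0,\infty)$ together with $\Psi(\infty)=0$ (i.e.\ integrating from $\infty$) gives it on $(r_0,\infty)$. The exploratory material about a first zero $R_1$, the scaling function and $\lim\chi$ is superfluous, and some of it (e.g.\ ``$\chi_r$ becomes positive beyond $R_1$'') is unjustified. It should simply be deleted, as you yourself note.
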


	\begin{proof}
		Using $Lw=0$ and \eqref{eqofv} we obtain
		\begin{align*}
			\left(r^{N-1}v^2\chi_r\right)_r = -r^{N-1}vG(v)\chi,
		\end{align*}
		where
		\begin{align*}
			G(t) := (p-1)(1-\theta)t^p + (q-1)\theta t^q.
		\end{align*}
		Moreover,
		\begin{align*}
			r^{N-1}v^2\chi_r \to 0 \quad \text{ as } r \to 0 \text{ or } r \to \infty.
		\end{align*}
		When $r \le r_0$, we have
		\begin{align*}
			r^{N-1}v^2\chi_r = -\int_0^r t^{N-1}v(t)G(v(t))\chi(t)dt < 0,
		\end{align*}
		and so $\chi_r < 0$. When $r > r_0$, from
		\begin{align*}
			r^{N-1}v^2\chi_r = \int_r^\infty t^{N-1}v(t)G(v(t))\chi(t)dt < 0,
		\end{align*}
		we deduce that $\chi_r < 0$ and complete the proof.
	\end{proof}

	\subsection{The weighted spectral method} \label{subsecspectralmethod}

	In this subsection, we assume that $N, p, q$ satisfy one of the conditions in Theorem \ref{thmmain}. By Propositions \ref{propG>01}, \ref{propG>02} and \ref{propG>03}, we have proved $\G > 0$ for $r > 0$. We will use the weighted spectral method to find a contradiction.
		
		Recall
	\begin{align*}
		\phi = -v_r > 0, \quad s = \ln r, \quad \rho(s) = r^{N-2}\phi(r)^2,
	\end{align*}
	and let
	\begin{align*}
		e(s) := \frac{w(r)}{\phi(r)}.
	\end{align*}
	Using $Lw = 0$ we get that
	\begin{align*}
		-\left(\rho e_s\right)_s = (N-1)\rho e, \quad \int_{\R}\rho eds = 0.
	\end{align*}
	It is not difficult to see that $e_s < 0$ since $w$ changes its sign exactly once and $\rho e_s \to 0$ as $s \to -\infty$ or $s \to \infty$. Using
	\begin{align*}
		-e_{ss} - \frac{\rho_s}{\rho}e_s = (N-1)e,
	\end{align*}
	one gets
	\begin{align} \label{eigenfunctionzs}
		\left(-\frac{d^2}{ds^2} - \frac{\rho_s}{\rho}\frac{d}{ds} - \left(\frac{\rho_s}{\rho}\right)_s - (N-1)\right)e_s = 0.
	\end{align}
	Define the operator
	\begin{align*}
		\mathcal{A} := U^{-1}\widetilde{H}U = -\frac{d^2}{ds^2} - \frac{\rho_s}{\rho}\frac{d}{ds} - \left(\frac{\rho_s}{\rho}\right)_s.
	\end{align*}
	In the following calculations, $\mathcal A$ is used as a differential expression on finite intervals. No global $L^2(\rho\,ds)$-integrability of $1/k$ is required.
	Direct computations yield
	\begin{align} \label{testing1k>0}
		\left(\mathcal{A} - (N-1)\right)\frac{1}{k} = \G\frac{1}{k} > 0.
	\end{align}
On any bounded interval $[-S,S]$ we obtain
	\begin{align} \label{-StoS}
		\int_{-S}^S \rho\left(e_s(\mathcal{A} - (N-1))\frac{1}{k} - \frac{1}{k}(\mathcal{A} - (N-1))e_s\right)ds  = \left[\rho \left(\frac{1}{k}e_{ss} - e_s\left(\frac{1}{k}\right)_s\right)\right]_{-S}^S.
	\end{align}
	Note that
	\begin{align*}
		\rho \left(\frac{1}{k}e_{ss} - e_s\left(\frac{1}{k}\right)_s\right) = r^{N-2}v^2\left(e_s k\right)_s.
	\end{align*}
	Let
	\begin{align*}
		\varpi = \frac{w}{v} = e k.
	\end{align*}
	Direct computations yield
	\begin{align*}
		e_s k = \varpi_s + (z-1)\varpi.
	\end{align*}
	As $r \to 0$, $s = \ln r \to -\infty$. Using
	\begin{align*}
		\varpi = w(0) + O(r^2), \quad \varpi_s = O(r^2), \quad \varpi_{ss} = O(r^2), \quad z = O(r^2), \quad z_s = O(r^2),
	\end{align*}
	we get
	\begin{align*}
		\rho \left(\frac{1}{k}e_{ss} - e_s\left(\frac{1}{k}\right)_s\right) = O(r^{N}) \to 0.
	\end{align*}
	Denote
	\begin{align*}
		l(r) = \frac{\phi_r}{\phi}, \quad j(r) = w_r - lw.
	\end{align*}
	Since
	\begin{align*}
		l = \frac{k_r}{k} - k,
	\end{align*}
	for large $r$, $l, l_r$ are bounded. Then we know $j, j_r$ decay exponentially. As $r \to \infty$, $s = \ln r \to \infty$, and we have
	\begin{align*}
		\rho \left(\frac{1}{k}e_{ss} - e_s\left(\frac{1}{k}\right)_s\right) = r^{N-1}vj + r^N\left(vj_r-v_rj\right)\to 0.
	\end{align*}
	Sending $S \to \infty$ in \eqref{-StoS} and combining \eqref{eigenfunctionzs} and \eqref{testing1k>0}, we deduce that
	\begin{align*}
		\int_{-S}^S\rho e_s \G\frac{1}{k}ds \to 0.
	\end{align*}
	However, since
	\begin{align*}
		\rho > 0, \quad e_s < 0, \quad \G > 0, \quad \frac{1}{k} > 0,
	\end{align*}
	we have
	\begin{align*}
		\int_{-S}^S\rho e_s \G\frac{1}{k}ds \le \int_{-S_0}^{S_0}\rho e_s \G\frac{1}{k}ds < 0 ~~ (S \ge S_0),
	\end{align*}
	a contradiction! This shows that $v$ is nondegenerate in $H_{rad}^1(\R^N)$. Then, using the decomposition in terms of spherical harmonics, we also obtain that $v$ is nondegenerate in $H^1(\R^N)$ and complete the proof in this case.

	\subsection{The moment-Pohozaev method} \label{subM-Pmethod}

	In this subsection we always assume that $N \ge 2$, $1 < q < p < 2^* - 1$, $N(p-1) \le 2(q+1)$. We will find a contradiction by comparing inequalities obtained by using moment identities and Pohozaev identity.

	\begin{lemma}
		We have
		\begin{align} \label{momentcompare}
			\frac{\int_{\R^N}vwdx}{\int_{\R^N}v^2dx} < \frac{\int_{\R^N}v^qwdx}{\int_{\R^N}v^{q+1}dx} < \frac{\int_{\R^N}v^pwdx}{\int_{\R^N}v^{p+1}dx}.
		\end{align}
	\end{lemma}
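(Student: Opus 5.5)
The idea is to write each ratio as a weighted average of the decreasing function $\chi=w/v$ and then compare the weights. Indeed,
\begin{align*}
\frac{\int_{\R^N}v^{\kappa}w\,dx}{\int_{\R^N}v^{\kappa+1}dx} = \int_{\R^N}\chi\, d\nu_\kappa, \quad d\nu_\kappa := \frac{v^{\kappa+1}dx}{\int_{\R^N}v^{\kappa+1}dx}, \quad \kappa\in\{1,q,p\},
\end{align*}
so the three quantities in \eqref{momentcompare} are the means of $\chi$ under the probability measures $\nu_1,\nu_q,\nu_p$. By the preceding lemma, $\chi$ is strictly decreasing in $r=|x|$.

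The key structural fact is that $v$ is strictly decreasing in $r$. Hence for $1\le\kappa_1<\kappa_2$ the relative density
\begin{align*}
\frac{d\nu_{\kappa_2}}{d\nu_{\kappa_1}} = c\, v^{\kappa_2-\kappa_1}
\end{align*}
is strictly decreasing in $r$. In other words, $\nu_{\kappa_2}$ concentrates more near the origin than $\nu_{\kappa_1}$.

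The comparison then follows from the strict Chebyshev integral inequality (the one already used in Step 2 of Proposition \ref{propG>03}), applied on $(0,\infty)$ with the radial measure $v^{\kappa_1+1}r^{N-1}dr$. Since $\chi$ and $v^{\kappa_2-\kappa_1}$ are both strictly decreasing in $r$, they are similarly ordered. This gives
\begin{align*}
\int \chi\, v^{\kappa_2-\kappa_1}d\nu_{\kappa_1} > \Big(\int \chi\, d\nu_{\kappa_1}\Big)\Big(\int v^{\kappa_2-\kappa_1}d\nu_{\kappa_1}\Big).
\end{align*}
Dividing by $\int v^{\kappa_2-\kappa_1}d\nu_{\kappa_1}$ yields $\int\chi\,d\nu_{\kappa_2} > \int\chi\,d\nu_{\kappa_1}$. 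Applying this with $(\kappa_1,\kappa_2)=(1,q)$ and then $(q,p)$ gives both inequalities in \eqref{momentcompare}.

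Alternatively, one can avoid citing Chebyshev directly. Write
\begin{align*}
\int\chi\,d\nu_{\kappa_2}-\int\chi\,d\nu_{\kappa_1} = \frac12\iint (\chi(x)-\chi(x'))\,(g(x)-g(x'))\,d\nu_{\kappa_1}(x)\,d\nu_{\kappa_1}(x')\Big/\int g\,d\nu_{\kappa_1},
\end{align*}
with $g=v^{\kappa_2-\kappa_1}$. The integrand is nonnegative and is positive whenever $|x|\neq|x'|$, which gives the strict inequality.

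Integrability is routine: $w\in H^1$ decays exponentially as a solution of $Lw=0$, and $0<v\le1$, so all integrals are finite and $\chi\,d\nu_\kappa = v^\kappa w\,dx/\int v^{\kappa+1}$ is integrable. The only point needing care is strictness, which holds because both functions are strictly monotone on a set of positive measure.

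I do not expect a genuine obstacle here; the main step is simply recognizing the monotone likelihood-ratio structure $v^{\kappa_2-\kappa_1}$.
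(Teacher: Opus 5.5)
Your proposal is correct and is essentially the paper's argument. The paper writes the difference of cross products as the same symmetrized double integral, $\frac12\int_0^\infty\int_0^\infty v(r)^\tau v(s)^\tau(\chi(r)-\chi(s))(v(r)^{t-\tau}-v(s)^{t-\tau})\,d\mu(r)\,d\mu(s)>0$, which is positive because $v$ and $\chi$ are both strictly decreasing, and then takes $(\tau,t)=(2,q+1)$ and $(q+1,p+1)$; this matches your $\kappa_1=\tau-1$, $\kappa_2=t-1$.
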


	\begin{proof}
		Let
		\begin{align*}
			d\mu(r) := |\mathbb{S}^{N-1}|r^{N-1}dr.
		\end{align*}
		Since $v$ and $\chi$ are strictly decreasing in $r > 0$, for any $2 \le \tau < t$ we have
		\begin{align*}
			& \left(\int_{\R^N}v^t\chi dx\right)\left(\int_{\R^N}v^\tau dx\right) - \left(\int_{\R^N}v^\tau\chi dx\right)\left(\int_{\R^N}v^tdx\right) \\
			= & \frac{1}{2}\int_0^\infty\int_0^\infty v(r)^\tau v(s)^\tau\left(\chi(r) - \chi(s)\right) \left(v(r)^{t-\tau} - v(s)^{t-\tau}\right)d\mu(r)d\mu(s) > 0,
		\end{align*}
		namely
		\begin{align*}
			\frac{\int_{\R^N}v^\tau \chi dx}{\int_{\R^N}v^\tau dx} < \frac{\int_{\R^N}v^t\chi dx}{\int_{\R^N}v^t dx}.
		\end{align*}
		Taking $\tau = 2, t = q+1$ and $\tau = q+1, t = p+1$ we complete the proof.
	\end{proof}

	By $Lw = 0$ and $Lv = -(p-1)(1-\theta)v^p - (q-1)\theta v^q$, we get
	\begin{align} \label{C+D}
		(p-1)(1-\theta)\int_{\R^N}v^pwdx + (q-1)\theta\int_{\R^N}v^qwdx = 0.
	\end{align}
	Using
	\begin{align*}
		L (x\cdot\nabla v) = 2\left((1-\theta)v^p + \theta v^q - \eta v\right),
	\end{align*}
	we deduce that
	\begin{align} \label{CDM}
		(1-\theta)\int_{\R^N}v^pwdx + \theta\int_{\R^N}v^qwdx =  \eta\int_{\R^N}vwdx.
	\end{align}
	It follows from \eqref{momentcompare} and \eqref{C+D} that
	\begin{align*}
		\int_{\R^N}v^pwdx > 0, \quad \int_{\R^N}v^qwdx < 0.
	\end{align*}
	Together with \eqref{CDM} we get
	\begin{align*}
		\eta\int_{\R^N}vwdx = \frac{p-q}{p-1}\theta\int_{\R^N}v^qwdx < 0.
	\end{align*}
	Using \eqref{momentcompare} again we conclude that
	\begin{align} \label{moment>}
		(p-q)\theta\int_{\R^N}v^{q+1}dx > (p-1)\eta\int_{\R^N}v^2dx.
	\end{align}
On the other hand, by the Nehari identity
	\begin{align*}
		\int_{\R^N}|\nabla v|^2dx + \eta\int_{\R^N}v^2dx = (1-\theta)\int_{\R^N}v^{p+1}dx + \theta\int_{\R^N}v^{q+1}dx
	\end{align*}
	and the Pohozaev identity
	\begin{align*}
		\frac{N-2}{2}\int_{\R^N}|\nabla v|^2dx + \frac{N\eta}{2}\int_{\R^N}v^2dx = \frac{N}{p+1}(1-\theta)\int_{\R^N}v^{p+1}dx + \frac{N}{q+1}\theta\int_{\R^N}v^{q+1}dx,
	\end{align*}
	one can see that
	\begin{align*}
		\eta\int_{\R^N}v^2dx = (1-\theta)\left(\frac{N}{p+1}-\frac{N-2}{2}\right)\int_{\R^N}v^{p+1}dx + \theta\left(\frac{N}{q+1}-\frac{N-2}{2}\right)\int_{\R^N}v^{q+1}dx.
	\end{align*}
	Thus
	\begin{align*}
		(p-1)\eta\int_{\R^N}v^2dx - (p-q)\theta\int_{\R^N}v^{q+1}dx = &~ (1-\theta)(p-1) \left(\frac{N}{p+1}-\frac{N-2}{2}\right)\int_{\R^N}v^{p+1}dx \\
		& + \theta(q-1)\left(1 - \frac{N(p-1)}{2(q+1)}\right)\int_{\R^N}v^{q+1}dx.
	\end{align*}
	Since $p + 1 < 2^*$ and $N(p-1) \le 2(q+1)$, we find a contradiction with \eqref{moment>}. This proves $v$ is nondegenerate in $H_{rad}^1(\R^N)$. Then, using the decomposition in terms of spherical harmonics, we also obtain that $v$ is nondegenerate in $H^1(\R^N)$ and complete the proof in this case.	

	\section{The uniqueness of positive solutions: the fixed frequency case} \label{secuniqueness1}

	This section is devoted to proving Theorem \ref{thmmain}. In view of Theorem \ref{thmMorse}, it is enough to obtain the following result.

	\begin{proposition} \label{propweakunique}
		Under the assumptions of Theorem \ref{thmnondegenerate}, radial solutions of \eqref{doublepower} having Morse index $1$ are unique.
	\end{proposition}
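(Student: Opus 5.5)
The plan is the continuation argument announced in the introduction: build branches of Morse index $1$ radial solutions parameterized by $\la$, using Theorem \ref{thmnondegenerate}, and identify all of them through the regime $\la \to \infty$.

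\emph{Step 1: local branches.} Work in $H^1_{rad}(\R^N)$ and set
\[
\mathcal{F}(u,\la) = u - (-\Delta+\la)^{-1}\bigl(|u|^{p-1}u + |u|^{q-1}u\bigr).
\]
This is $C^1$ on $H^1_{rad}\times(0,\infty)$. Fix a radial Morse index $1$ solution $u_*$ at $\la_*$. Theorem \ref{thmnondegenerate} gives $\ker L_{u_*}|_{L^2_{rad}}=\{0\}$, so $\partial_u\mathcal{F}(u_*,\la_*)$ is an isomorphism (identity plus compact). The implicit function theorem then yields a unique $C^1$ curve $\la\mapsto u(\la)$ near $\la_*$. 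Positivity persists along the curve by the maximum principle together with uniform exponential decay. The Morse index also stays equal to $1$: the negative eigenvalue depends continuously on $\la$, and no eigenvalue can cross $0$ because $0$ is never a radial eigenvalue (nondegeneracy), while the nonradial modes are handled by the spherical harmonic decomposition as in Section \ref{secnondegenerate}.

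\emph{Step 2: global continuation.} Extend the branch to a maximal interval $(\la_-,\la_+)\ni\la_*$. To show it is all of $(0,\infty)$, I need a priori bounds on compact subintervals $[a,b]\subset(0,\infty)$.
\begin{itemize}
\item Upper $L^\infty$ bounds on positive solutions with $\la\in[a,b]$ come from Gidas--Spruck blow-up, since $p<2^*-1$.
\item A lower bound $\|u\|_\infty\ge\la^{1/(q-1)}$-type holds, from $\la\le u(0)^{p-1}+u(0)^{q-1}$ at the maximum point.
\item Uniform exponential decay then gives compactness in $H^1_{rad}$.
\end{itemize}
Any limit at a finite endpoint is therefore a positive radial solution. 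It has Morse index $\le1$ by lower semicontinuity, and exactly $1$ since $Q_L(u)<0$. By Step 1 it is nondegenerate, so the branch continues past the endpoint. Hence every Morse index $1$ radial solution lies on a branch defined for all $\la>0$.

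\emph{Step 3: uniqueness as $\la\to\infty$ (the main obstacle).} Rescale by $u(x)=\la^{1/(p-1)}w(\sqrt{\la}\,x)$, which gives
\[
-\Delta w + w = w^p + \la^{-(p-q)/(p-1)} w^q .
\]
This is a small perturbation of the pure-power problem, and by Kwong the limit equation has a unique nondegenerate radial solution $w_0$. I would show that every positive radial solution $w_\la$ converges to $w_0$ in $H^1$ as $\la\to\infty$, using uniform bounds via blow-up and a nontriviality lower bound $w_\la(0)\ge c>0$. The implicit function theorem at $(w_0,0)$ then gives local uniqueness, so for $\la$ large there is exactly one radial positive solution. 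The delicate point is the uniform $L^\infty$ bound: the $\la^{-(p-q)/(p-1)}w^q$ term is subcritical and small, so the blow-up argument should work, but it needs checking.

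\emph{Step 4: conclusion.} Take two Morse index $1$ radial solutions at some $\la_0$. Their global branches are both defined on $(0,\infty)$, and by Step 3 they coincide for large $\la$. By the local uniqueness in Step 1 they cannot separate, since the set where the two branches agree is open and closed, so they agree at $\la_0$. The main difficulties are Step 3 and the a priori bounds in Step 2.
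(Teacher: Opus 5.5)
Your proposal is correct and follows the same continuation scheme as the paper. It builds implicit-function-theorem branches from Theorem \ref{thmnondegenerate}, continues them to $\la\to\infty$ while keeping the Morse index equal to $1$ by lower semicontinuity and $Q_L<0$, and identifies any two branches through the rescaled limit $w_0$, its nondegeneracy, and an open--closed argument. The only real difference is in the a priori bounds. The paper continues only upward, bounds $\|u(\la)\|_{H^1}$ via the identity $g'(\la)=\int_{\R^N}u(\la)^2dx$ together with $\la\int_{\R^N}u(\la)^2dx\le\frac{q+1}{q-1}g(\la)$, and excludes vanishing through the Nehari level $c_\la>0$. Your Gidas--Spruck blow-up, maximum-point lower bound and uniform decay instead work in both directions, and they make the convergence $w_\la\to w_0$ self-contained rather than cited from \cite{JZZ}. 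The uniform decay step does need a short extra argument, e.g.\ via the monotone ODE energy.
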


	\begin{proof}
		We define the action functional $J_\la : H^1(\R^N) \to \R$ as
		\begin{align*}
			J_\la(u) = \frac{1}{2}\int_{\R^N}\left(|\nabla u|^2 + \la u^2\right)dx - \frac{1}{p+1}\int_{\R^N}|u|^{p+1}dx - \frac{1}{q+1}\int_{\R^N}|u|^{q+1}dx.
		\end{align*}
		The functional $J_\la$ satisfies the Palais-Smale condition in the radial space $H_{rad}^1(\R^N)$. Any critical point of $J_\la$ constrained on $H_{rad}^1(\R^N)$ is a weak solution of
		\begin{align} \label{eqofweaksolution}
			-\Delta u + \la u = |u|^{p-1}u + |u|^{q-1}u \quad \text{ in } \R^N.
		\end{align}
		If the critical point is positive, it is indeed a solution of \eqref{doublepower}. Define the radial Nehari manifold
		\begin{align*}
			\mathcal{N}_\la := \big\{u \in H_{rad}^1(\R^N) \backslash \{0\}: \int_{\R^N}\left(|\nabla u|^2 + \la u^2\right)dx = \int_{\R^N}(|u|^{p+1} + |u|^{q+1})dx \big\},
		\end{align*}
		and set
		\begin{align*}
			c_\la := \inf_{\mathcal{N}_\la}J_\la.
		\end{align*}
		Then we can find a non-negative critical point $u_\la \in H_{rad}^1(\R^N)$ of $J_\la$ at level $c_\la$, namely
		\begin{align*}
			J_\la(u_\la) = c_\la.
		\end{align*}
		Using the strong maximum principle, one sees that $u_\la$ is positive. Since the codimension of $\mathcal{N}_\la$ in $H_{rad}^1(\R^N)$ is $1$, we have $m_{rad}(u_\la) \le 1$. Together with
		\begin{align*}
			\langle L_{u_\la}u_\la,u_\la \rangle =(1-p)\int_{\R^N}|u_\la|^{p+1}dx + (1-q)\int_{\R^N}|u_\la|^{q+1}dx < 0,
		\end{align*}
		we derive that $m_{rad}(u_\la) = 1$. Then, using the decomposition in terms of spherical harmonics, we also obtain that $m(u_\la) = 1$.

		For any $\la_* > 0$, Theorem \ref{thmnondegenerate} yields that $u_{\la_*}$ is nondegenerate in $H_{rad}^1(\R^N)$. Applying the implicit function theorem to the map
		\begin{align*}
			H^1_{rad}(\R^N) \times (0,\infty) \to \left(H^1_{rad}(\R^N)\right)^*: ~~ (u,\la) \mapsto -\Delta u + \la u - |u|^{p-1}u - |u|^{q-1}u,
		\end{align*}
		we can establish a local branch parameterized by $\la$
		\begin{align*}
			(\la_*-\ep,\la_*+\ep) \to H_{rad}^1(\R^N), \quad \la \mapsto u(\la),
		\end{align*}
		such that $\ep > 0$ is small, $u(\la_*) = u_{\la_*}$, the map $\la \mapsto u(\la)$ is $C^1\left((\la_*-\ep,\la_*+\ep), H_{rad}^1(\R^N)\right)$, $u(\la)$ solves \eqref{eqofweaksolution} and $m(u(\la)) = 1$. Define the operator
		\begin{align*}
			L_{\la,-} := -\Delta + \la - |u(\la)|^{p-1} - |u(\la)|^{q-1}.
		\end{align*}
		Note that $L_{\la,-}u(\la) = 0$ and $\sigma_{ess}(L_{\la,-}) = [\la,\infty)$. Since $u(\la_*)$ is positive, $0$ is the first eigenvalue of $L_{\la_*,-}$ and is simple. It is clear that $L_{\la,-} \to L_{\la_*,-}$ in norm-resolvent sense. Taking $\ep > 0$ small enough we can get that $0$ is the first eigenvalue of $L_{\la,-}$ and $u(\la)$ is positive for $\la \in (\la_*-\ep,\la_*+\ep)$. Next, we extend the branch to $\la \to \infty$. For this, we define
		\begin{align*}
			\la^* := \sup \big\{\Lambda: \text{ for } \la \in [\la_*,\Lambda), & ~ u(\la) \text{ is a positive solution of } \eqref{eqofweaksolution}, m(u(\la)) = 1,\\
			& ~~~ u(\la_*) = u_{\la_*}, \la \mapsto u(\la) \text{ is } C^1\left([\la_*,\Lambda), H_{rad}^1(\R^N)\right) \big\}
		\end{align*}
		We claim $\la^* = \infty$. Suppose by contradiction that $\la^* < \infty$. Next, we prove that $u(\la)$ is uniformly bounded for $\la \in [\la_*,\la^*)$. Define
		\begin{align*}
			g(\la) := \frac{p-1}{p+1}\int_{\R^N}|u(\la)|^{p+1}dx + \frac{q-1}{q+1}\int_{\R^N}|u(\la)|^{q+1}dx.
		\end{align*}
		By the Nehari identity we have $C_1g(\la) \le \|u(\la)\|_{H^1(\R^N)}^2 \le C_2g(\la)$ and
		\begin{align*}
			\la \int_{\R^N}u(\la)^2dx \le \frac{q+1}{q-1} g(\la).
 		\end{align*}
		Let $v_\la = \frac{d}{d \la}u(\la)$ and we have
		\begin{align*}
			-\Delta v_\la + \la v_\la + u(\la) = p|u(\la)|^{p-1}v_\la + q|u(\la)|^{q-1}v_\la.
		\end{align*}
		Then we get
		\begin{align*}
			g'(\la) = \int_{\R^N}u(\la)^2 dx \le \frac{q+1}{q-1}\la^{-1} g(\la),
		\end{align*}
		yielding
		\begin{align*}
			g(\la) \le \left(\frac{\la}{\la_*}\right)^{\frac{q+1}{q-1}}g(\la_*) \le \left(\frac{\la^*}{\la_*}\right)^{\frac{q+1}{q-1}}g(\la_*).
		\end{align*}
		Namely, there exists $C$ depending on $\la_*$ and $\la^*$ but independent of $\la$ such that
		\begin{align*}
			\|u(\la)\|_{H^1(\R^N)} \le C.
		\end{align*}
		One can check that $c_\la > 0$ is continuous and increasing in $\la > 0$, see for example \cite{Song}. Take $\{\la_n\} \subset [\la_*,\la^*)$ such that $\la_n \to \la^*$. The sequence $\{u(\la_n)\}$ is a bounded Palais-Smale sequence of $J_{\la^*}$. By the Palais-Smale condition, up to a subsequence, there exists $\tilde{u} \in H_{rad}^1(\R^N)$ such that $u(\la_n) \to \tilde{u}$ strongly in $H^1(\R^N)$. Clearly, $\tilde{u}$ is non-negative. From
		\begin{align*}
			J_{\la^*}(\tilde{u}) = \lim_{n}J_{\la_n}(u(\la_n)) \ge \lim_{n}c_{\la_n} = c_{\la^*} > 0,
 		\end{align*}
		we deduce that $\tilde{u} \neq 0$. By the strong maximum principle, $\tilde{u}$ is positive. Combining
		\begin{align*}
			m(\tilde{u}) \le \liminf_{n}m(u(\la_n)), \quad \langle L_{\tilde{u}}\tilde{u},\tilde{u} \rangle < 0,
		\end{align*}
		and the decomposition in terms of spherical harmonics, we can check that $m(\tilde{u}) = 1$. By Theorem \ref{thmnondegenerate}, $\tilde{u}$ is nondegenerate. Applying the implicit function theorem at $(\tilde{u}, \la^*)$ with $u(\la^*) = \tilde{u}$, we find a contradiction with the definition of $\la^*$.

		Denote $u_\la = u(\la)$ for $\la \in [\la_*,\infty)$. We consider
		\begin{align*}
			w_\la(x) := \la^{-\frac{1}{p-1}}u_\la\left(\frac{x}{\sqrt{\la}}\right),
		\end{align*}
		which satisfies the following equation
		\begin{align} \label{scalingequation}
			-\Delta w + w = w^p + \la^{\frac{q-p}{p-1}}w^q \quad \text{ in } \R^N, \quad w > 0.
		\end{align}
		Let $w_0$ be the unique radial positive solution of
		\begin{align*}
			-\Delta w + w = w^p, \quad w \in H^1(\R^N).
		\end{align*}
		One can see that $w_\la \to w_0$ in $H^1(\R^N)$ as $\la \to \infty$, see \cite[Theorem 4.10]{JZZ}.
		
		If at $\la =\la_*$, \eqref{doublepower} admits another radial solution $\widehat{u}_{\la_*} \neq u_{\la_*}$ with $m(\widehat{u}_{\la_*}) = 1$, then we obtain another branch
		\begin{align*}
			[\la_*,\infty) \to H^1_{rad}(\R^N), \quad \la \mapsto \widehat{u}(\la)
		\end{align*}
		such that $\widehat{u}(\la_*)= \widehat{u}_{\la_*}$, the map $\la \mapsto \widehat{u}(\la)$ is $C^1$, $\widehat{u}(\la)$ is a positive solution of \eqref{eqofweaksolution} and $m(\widehat{u}(\la)) = 1$. Denote $\widehat{u}_\la = \widehat{u}(\la)$ and we consider
		\begin{align*}
			\widehat{w}_\la(x) := \la^{-\frac{1}{p-1}}\widehat{u}_\la\left(\frac{x}{\sqrt{\la}}\right),
		\end{align*}
		which is also a solution of \eqref{scalingequation}. As $\la \to \infty$, we have $\widehat{w}_\la \to w_0$ in $H^1(\R^N)$. Since $w_\la \neq \widehat{w}_\la$, using the nondegeneracy of $w_0$ and applying the implicit function theorem, we find a contradiction and complete the proof.
	\end{proof}

	\begin{proof}[Proof of Theorem \ref{thmmain}]
		Under the assumptions of Theorem \ref{thmmain}, conditions in both Theorem \ref{thmMorse} and Theorem \ref{thmnondegenerate} are satisfied. Combining results in Theorem \ref{thmMorse} and Proposition \ref{propweakunique} we complete the proof of Theorem \ref{thmmain}.
	\end{proof}

	\section{The uniqueness of positive solutions: the fixed mass case} \label{secuniqueness2}
	
	In this section, we prove Theorem \ref{thmmain2}. For this purpose, we first give some lemmas.
	
	\begin{lemma} \label{atmosttwozero}
		Let $\sigma$ be a finite signed measure on $[0,\infty)$ satisfying $\int (1+y^2)d|\sigma|(y) < \infty$. Assume that for some $0 < s_1 < s_2 < \infty$, its signs on $(0,s_1)$, $(s_1,s_2)$ and $(s_2,\infty)$ are $(-,+,-)$ with nonzero mass in every interval and no atoms at $0$, $s_1$, $s_2$. Then
		\begin{align*}
			\varphi(t) = \int_0^\infty e^{-ty}d\sigma(y), \quad t \ge 0,
		\end{align*}
		has at most two distinct zeros on $[0,\infty)$.
	\end{lemma}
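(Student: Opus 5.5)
The plan is to use the variation-diminishing property of the Laplace transform (Descartes' rule of signs for exponential integrals), implemented concretely by Rolle's theorem after multiplying by suitable exponentials. The signed measure $\sigma$ changes sign exactly twice, so $\varphi$ should have at most two zeros.

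First, regularity. Since $|\sigma|$ is finite, $\varphi$ is continuous on $[0,\infty)$ and real-analytic on $(0,\infty)$. On $[0,\infty)$ it is $C^2$, because $\int y^2\,d|\sigma|<\infty$ lets us differentiate under the integral:
\begin{align*}
\varphi'(t) = -\int_0^\infty y e^{-ty}\,d\sigma(y).
\end{align*}
The mass hypotheses (nonzero mass in each interval) guarantee that $\sigma\neq 0$. By uniqueness of the Laplace transform, $\varphi\not\equiv 0$, so its zeros on $(0,\infty)$ are isolated.

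Next, strip off one sign change. Set $\psi(t)=e^{s_2 t}\varphi(t)=\int e^{-t(y-s_2)}\,d\sigma(y)$. Then
\begin{align*}
\psi'(t) = \int_0^\infty (s_2-y)e^{-t(y-s_2)}\,d\sigma(y) = e^{s_2t}\int_0^\infty e^{-ty}\,d\tilde\sigma(y), \quad d\tilde\sigma := (s_2-y)\,d\sigma.
\end{align*}
The factor $s_2-y$ is positive on $(0,s_2)$ and negative on $(s_2,\infty)$. So $\tilde\sigma$ has signs $(-,+,+)$ on the three intervals, i.e. sign pattern $(-,+)$ with a single change at $s_1$. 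It has no atoms at $0$, $s_1$, $s_2$ and still has finite first moment. Repeating the trick with $e^{s_1 t}$, the function $\chi(t)=e^{s_1t}\int e^{-ty}\,d\tilde\sigma$ has derivative $e^{s_1t}\int e^{-ty}(s_1-y)\,d\tilde\sigma(y)$. The measure $(s_1-y)\,d\tilde\sigma$ is negative on all of $(0,\infty)$ and nonzero, so this Laplace transform is strictly negative for every $t\ge 0$.

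Now count zeros backward with Rolle's theorem. Since $\chi'<0$, $\chi$ has at most one zero on $[0,\infty)$. Hence $\psi'$ (which has the same zeros as $\chi$) has at most one zero. Therefore $\psi$, and so $\varphi$, has at most two zeros: if $\psi$ had three zeros $t_1<t_2<t_3$, Rolle would give zeros of $\psi'$ in $(t_1,t_2)$ and $(t_2,t_3)$, a contradiction.

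The only delicate point is integrability when forming the successive measures. Here $(s_2-y)(s_1-y)\,d\sigma$ needs a finite second moment in the sense $\int y^2\,d|\sigma|<\infty$. This is exactly the hypothesis, and it ensures every integral is finite and differentiation under the integral is justified at $t=0$ by dominated convergence. The statement counts distinct zeros, so multiplicity needs no handling. The no-atom assumption only keeps the sign pattern well defined.
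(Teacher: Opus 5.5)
Your proof is correct and follows the paper's argument: two applications of Rolle's theorem with the weights $e^{s_1 t}$ and $e^{s_2 t}$, which reduce the question to the strictly negative Laplace transform of $(s_1-y)(s_2-y)\,d\sigma$. The only differences are cosmetic: you multiply by $e^{s_2 t}$ first and count zeros forward, whereas the paper uses $e^{s_1 t}$ first and argues by contradiction.
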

	
	\begin{proof}
		Suppose by contradiction that $0 \le t_0 < t_1 < t_2$ are three distinct zeros of $\varphi(t)$. Applying Rolle's theorem for $e^{s_1t}\varphi$ gives $c_i \in (t_{i-1},t_i)$, $i = 1,2$ such that $T(c_i) = 0$ where
		\begin{align*}
			T(t) = \int_0^\infty (s_1-y)e^{-ty}d\sigma(y).
		\end{align*}
		Set $K(t) = e^{s_2t}T(t)$. Then
		\begin{align*}
			K'(t) = e^{s_2t} \int_0^\infty (s_2-y)(s_1-y)e^{-ty}d\sigma(y) < 0,
		\end{align*}
		contradicting $K(c_1) = K(c_2) = 0$.
	\end{proof}
	
	\begin{lemma} \label{asymptoticlemma}
		Let $u$ be a radial solution of \eqref{doublepower} and let $v \in H_{rad}^1(\R^N)$ satisfy
		\begin{align} \label{Luv=-u}
			L_u v = -u.
		\end{align}
		Then there exists $C > 0$ such that as $r \to \infty$,
		\begin{align}
			& u(r) = C r^{-\frac{N-1}{2}}e^{-\sqrt{\la}r}\left(1 + o(1)\right), \label{expandingu}\\
			& v(r) = -\frac{C}{2\sqrt{\la}}r^{-\frac{N-3}{2}}e^{-\sqrt{\la}r}\left(1 + o(1)\right). \label{expandingv}
		\end{align}
		In particular, $v(r) < 0$ for sufficiently large $r$.
	\end{lemma}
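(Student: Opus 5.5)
The asymptotics in \eqref{expandingu} are classical, so the real work is \eqref{expandingv}. The plan is to treat \eqref{Luv=-u} as a radial ODE whose right-hand side $-u$ resonates with the homogeneous decaying solution, and to extract the $r^{-(N-3)/2}$ growth by variation of parameters.

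First, \eqref{expandingu}. Write $u(r)=r^{-\frac{N-1}{2}}\psi(r)$. Then
\begin{align*}
	-\psi''+\la\psi = \left(\frac{(N-1)(N-3)}{4r^2}+u^{p-1}+u^{q-1}\right)\psi=:V(r)\psi .
\end{align*}
Here $V=O(r^{-2})$, so $V\in L^1(1,\infty)$. Standard Levinson-type asymptotics then give $\psi(r)=Ce^{-\sqrt\la r}(1+o(1))$ with $C>0$. One can also use the Volterra integral equation for $e^{\sqrt\la r}\psi$. Positivity of $u$ forces $C>0$, and decay excludes the growing mode.

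Next, \eqref{expandingv}. Set $v=r^{-\frac{N-1}{2}}\chi$. Then
\begin{align*}
	-\chi''+\la\chi - \widetilde V(r)\chi = -\psi,
\end{align*}
where $\widetilde V=\frac{(N-1)(N-3)}{4r^2}+pu^{p-1}+qu^{q-1}=O(r^{-2})$. Let $\psi_-\sim e^{-\sqrt\la r}$ and $\psi_+\sim e^{\sqrt\la r}$ be the homogeneous solutions, normalized by their asymptotics; their Wronskian is $W=2\sqrt\la(1+o(1))$, in fact constant, equal to $2\sqrt\la$. Since $\chi$ is not exponentially growing (because $v\in H^1$ and by elliptic decay), the decaying particular solution is given by variation of constants from infinity:
\begin{align*}
	\chi(r)=a\psi_-(r)-\frac{1}{2\sqrt\la}\left(\psi_-(r)\int_{r_1}^{r}\psi_+\psi\,dt+\psi_+(r)\int_r^\infty\psi_-\psi\,dt\right).
\end{align*}
The signs are to be checked; the key point is the structure. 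Since $\psi_+\psi\to C$, the first integral equals $Cr(1+o(1))$. The second term is $\psi_+\cdot O(e^{-2\sqrt\la r})=O(e^{-\sqrt\la r})$, which is lower order. Hence
\begin{align*}
	\chi(r)=-\frac{C}{2\sqrt\la}\,r\,e^{-\sqrt\la r}(1+o(1)).
\end{align*}
Multiplying by $r^{-\frac{N-1}{2}}$ gives \eqref{expandingv}. The sign is confirmed by the model case $-\chi''+\la\chi=-Ce^{-\sqrt\la r}$, whose particular solution is $-\frac{C}{2\sqrt\la}re^{-\sqrt\la r}$. The final claim, that $v(r)<0$ for large $r$, is immediate.

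The main obstacle is justifying that $v$ contains no $\psi_+$ component and that the $o(1)$ errors are controlled. First, $v\in H^1_{rad}$ together with the equation gives exponential decay of $v$ for any rate below $\sqrt\la$, as for $u$. This rules out the growing mode exactly: a nonzero $\psi_+$ coefficient would contradict $v\in L^2$. Second, the asymptotics $\psi_\pm=e^{\pm\sqrt\la r}(1+o(1))$ require only $\widetilde V\in L^1(1,\infty)$. The integrals are controlled by the relation $\int_{r_1}^r\psi_+\psi\,dt = Cr+o(r)$, which follows from $\psi_+\psi\to C$.
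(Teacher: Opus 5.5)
Your proposal is correct and follows essentially the paper's route. Both use the Liouville substitution $r^{(N-1)/2}v$, variation of constants from infinity, and the resonance $\int^r e^{\sqrt\la t}\cdot Ce^{-\sqrt\la t}\,dt\sim Cr$ that produces the extra factor $r$, with the growing mode excluded by $v\in L^2$. The only difference is technical: you build the $O(r^{-2})$ potential into exact Levinson-type fundamental solutions $\psi_\pm$, so $-\psi$ is the only forcing. The paper instead keeps the free exponentials $e^{\pm\sqrt\la r}$, moves $-\eta V$ into the forcing, and must first close a Volterra bound $V=O((1+r)e^{-\sqrt\la r})$ to see that this term contributes only $O(\ln r)=o(r)$. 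One harmless slip: the $\frac{(N-1)(N-3)}{4r^2}$ term should enter your potentials $V,\widetilde V$ with a minus sign, which does not matter since only integrability on $(1,\infty)$ is used.
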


	\begin{proof}
		The expansion \eqref{expandingu} is the standard radial bound-state asymptotic. Next we prove \eqref{expandingv}. Set $m = (N-1)/2$, $U = r^m u$, $V = r^m v$ and $c_N = (N-1)(N-3)/4$. Then \eqref{Luv=-u} becomes
		\begin{align*}
			-V'' + \left(\la + \eta(r)\right)V = -U, \eta(r) = \frac{c_N}{r^2} - pu^{p-1} - qu^{q-1} = O(r^{-2}).
		\end{align*}
		We solve that
		\begin{align*}
			V(r) = C_1e^{-\sqrt{\la}r} + \frac{1}{2\sqrt{\la}}\left(e^{-\sqrt{\la}r}\int_R^r e^{\sqrt{\la}s}f(s)ds + e^{\sqrt{\la}r}\int_r^\infty e^{-\sqrt{\la}s}f(s)ds\right), \quad f = -U - \eta V.
		\end{align*}
		Taking $R$ large and closing this Volterra estimate first yields $V = O\left((1+r)e^{-\sqrt{\la}r}\right)$. Since $U = Ce^{-\sqrt{\la}r}\left(1 + o(1)\right)$,
		\begin{align*}
			\int_R^r e^{\sqrt{\la}s}(-U(s))ds = -Cr + o(r), \quad \int_R^r e^{\sqrt{\la}s}\eta(s)V(s)ds = O(\ln r) = o(r).
		\end{align*}
		Hence,
		\begin{align*}
			V= -\frac{C}{2\sqrt{\la}}re^{-\sqrt{\la}r} + o(re^{-\sqrt{\la}r}),
		\end{align*}
		proving \eqref{expandingv}.
	\end{proof}

	\begin{lemma} \label{intuvneq0}
		Let $u$ and $v$ be given in Lemma \ref{asymptoticlemma}. Assume that
		\begin{align*}
			N\ge 2, \quad 1 < q < p \le 1 + \frac{4}{N}.
		\end{align*}
		Then
		\begin{align*}
			\int_{\R^N}uv dx \neq 0.
		\end{align*}
	\end{lemma}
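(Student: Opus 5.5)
The plan is to argue by contradiction. I assume $\int_{\R^N}uv\,dx=0$ and combine two integral identities with a sign-change count for a Laplace-type transform, where Lemma \ref{atmosttwozero} does the counting. Throughout, $v$ denotes the function of Lemma \ref{asymptoticlemma} (not the rescaled solution of Section \ref{secpre1}). Set
\begin{align*}
A=\int_{\R^N}u^pv\,dx,\quad B=\int_{\R^N}u^qv\,dx,\quad M=\int_{\R^N}uv\,dx,\quad m_0=\int_{\R^N}u^2dx .
\end{align*}

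\textbf{Step 1: two identities.} I test \eqref{Luv=-u} against the two functions on which $L_u$ acts explicitly. First, $L_uu=-(p-1)u^p-(q-1)u^q$; pairing with $v$ and using self-adjointness gives $(p-1)A+(q-1)B=m_0$. Second, $L_u(x\cdot\nabla u)=2(u^p+u^q-\la u)$ and $\int_{\R^N}u\,x\cdot\nabla u\,dx=-\frac N2 m_0$; pairing with $v$ gives $A+B-\la M=\frac N4 m_0$. Exponential decay from Lemma \ref{asymptoticlemma} justifies all integrations by parts.

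Now assume $M=0$. Then $A+B=\frac N4m_0>0$, and eliminating $m_0$ yields
\begin{align*}
\Big(p-1-\tfrac4N\Big)A=\Big(\tfrac4N-(q-1)\Big)B .
\end{align*}
The right-hand coefficient is positive and the left-hand one is nonpositive because $p\le 1+4/N$. Two cases follow. If $p<1+4/N$, then $A$ and $B$ are nonzero with opposite signs, and $A+B>0$. If $p=1+4/N$, then $B=0$ and $A>0$.

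\textbf{Step 2: transform to Lemma \ref{atmosttwozero}.} I set $y=-\ln(u/u(0))\in[0,\infty)$, which is a monotone function of $r$ by radial decrease. Let $\sigma$ be the pushforward of the measure $u(0)\,u\,v\,dx$ under $x\mapsto y$. Then
\begin{align*}
\varphi(t):=\int_0^\infty e^{-ty}d\sigma(y)=u(0)^{-t}\int_{\R^N}u^{1+t}v\,dx,
\end{align*}
so $\varphi(0)$, $\varphi(q-1)$, $\varphi(p-1)$ are positive multiples of $M$, $B$, $A$. The integrability $\int(1+y^2)d|\sigma|<\infty$ follows from \eqref{expandingu}, \eqref{expandingv}, since $y\sim\sqrt\la r$. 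The signs of $\sigma$ are those of $v$ in $r$. On the far end, $v<0$ for large $r$ by Lemma \ref{asymptoticlemma}.

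\textbf{Step 3: sign structure of $v$ (main obstacle).} I need to show that $v$ has at most two sign changes and that, when there are exactly two, the pattern is $(-,+,-)$ in $r$; fewer changes only make the counting below easier. My first attempt is a Sturm comparison of the radial ODE $L_uv=-u<0$ against $u$, which satisfies $L_uu<0$, and against $x\cdot\nabla u$, using $m_{rad}(u)=1$ and nondegeneracy (Theorems \ref{thmMorse}, \ref{thmnondegenerate}). The idea is that each sign change beyond two would generate, between consecutive zeros, a test function with nonpositive quadratic form, contradicting $m_{rad}=1$. One needs care with the source term $-u$, which is where I expect most of the work to lie.

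\textbf{Step 4: counting zeros.} In the case $p<1+4/N$, $\varphi$ vanishes at $t=0$, and since $\varphi(q-1)$ and $\varphi(p-1)$ have opposite signs it has a second zero in $(q-1,p-1)$. For a third zero I use the behaviour as $t\to\infty$: there $\varphi$ is dominated by small $y$, so its eventual sign is the sign of $v$ near the origin. In the pattern $(-,+,-)$ this sign is negative. Tracking signs, $\varphi(p-1)>0$ whenever $A>0$, which forces a third zero beyond $p-1$. If instead $A<0<B$, a sign mismatch appears near $t=0$, where $\varphi(t)\approx -t\int y\,d\sigma$. This step needs the extra fact that $\int y\,d\sigma$ has the right sign, and I would try to obtain that by differentiating the identities of Step 1. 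In the case $p=1+4/N$, $\varphi$ has zeros at $0$ and $q-1$ and satisfies $\varphi(p-1)>0$ with negative tail, which gives a third zero in $(p-1,\infty)$.

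If $v$ has fewer than two sign changes, then $\varphi$ has at most one zero by the same Rolle argument used in the proof of Lemma \ref{atmosttwozero}. In every case we get more zeros than allowed, which contradicts Lemma \ref{atmosttwozero}. Hence $M\ne0$.
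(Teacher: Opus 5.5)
Your Steps 1, 2 and the zero count in Step 4 follow the paper's proof. However, Step 3, which you yourself flag as the main obstacle, is not proved, and it is the core of the lemma. The Sturm comparison with $u$ and $x\cdot\nabla u$ and the appeal to nondegeneracy are not needed. Your worry about the source term $-u$ is also misplaced: that term is exactly what makes the argument work. The missing argument has two parts.

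(i) Let $\Omega$ be a nodal domain (an annulus or a ball) on which $v>0$. Then $v\mathbf{1}_\Omega\in H^1_{rad}(\R^N)$, and by \eqref{Luv=-u} its quadratic form equals $\int_\Omega (L_uv)v\,dx=-\int_\Omega uv\,dx<0$. Two such domains have disjoint supports and so span a two-dimensional negative subspace, contradicting $m(u)=1$ from Theorem \ref{thmMorse}. On negative nodal domains the same quantity is positive, so only positive domains are controlled. A test function with merely nonpositive form, as you propose, would not contradict $m(u)=1$ by itself. Hence $\{v>0\}$ is a single interval $(r_1,r_2)$. It is nonempty because $u>0$ and $\int uv=0$, and bounded by Lemma \ref{asymptoticlemma}.

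(ii) $v$ has no zero $r^*\notin[r_1,r_2]$. Such a point would be a local maximum of $v\le0$, so $v'(r^*)=0$ and $v''(r^*)\le 0$. But the radial form of \eqref{Luv=-u} gives $v''(r^*)=u(r^*)>0$.

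Together, (i) and (ii) give exactly the pattern $(+,-)$ when $r_1=0$ and $(-,+,-)$ when $r_1>0$, with nonzero mass on each piece. This is what Lemma \ref{atmosttwozero} and your large-$t$ argument require.

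Second, in Step 4 you leave open a subcase $A<0<B$, for which you say you need an extra fact about $\int y\,d\sigma$. This subcase cannot occur, so there is nothing to prove. From your relation $B=-cA$ with $c=\frac{4/N-(p-1)}{4/N-(q-1)}\in[0,1)$, and $A+B=\frac N4 m_0>0$, you get $(1-c)A>0$, i.e.\ $A>0$. Equivalently, $(p-q)A=\bigl(1-\frac N4(q-1)\bigr)m_0>0$ and $(p-q)B=\bigl(\frac N4(p-1)-1\bigr)m_0\le 0$, as in the paper. Thus $\varphi(q-1)\le 0<\varphi(p-1)$ holds uniformly for $p\le 1+4/N$. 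This gives a zero in $[q-1,p-1)$ besides $t=0$, and the negative tail gives a third zero.

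For the pattern $(+,-)$ your Rolle argument is fine. The paper instead writes $B=\int_{\R^N}\bigl(u^{q-1}-u(r_2)^{q-1}\bigr)uv\,dx>0$, contradicting $B\le0$.
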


	\begin{proof}
		Arguing by contradiction, we assume
		\begin{align*}
			\int_{\R^N}uv dx = 0.
		\end{align*}
		For $a \ge 1$, we define
		\begin{align*}
			H(a) := \int_{\R^N}u^avdx, \quad M = \int_{\R^N}u^2dx.
		\end{align*}
		By \eqref{Luv=-u} and
		\begin{align*}
			L_u u = (1-p)u^p + (1-q)u^q,
		\end{align*}
		we derive that
		\begin{align} \label{M}
			(p-1)H(p) + (q-1)H(q) = M.
		\end{align}
		Furthermore, using
		\begin{align*}
			L_u\left(x\cdot \nabla u\right) = 2(-\Delta u) = 2 \left(u^p + u^q - \la u\right)
		\end{align*}
		and
		\begin{align*}
			\int_{\R^N}u \left(x \cdot \nabla u\right)dx = -\frac{N}{2}M,
		\end{align*}
		we obtain that
		\begin{align} \label{N4M}
			H(p) + H(q) = \frac{N}{4}M.
		\end{align}
		Combining \eqref{M} and \eqref{N4M} we solve that
		\begin{align}
			& (p-q)H(q) = \left(\frac{N}{4}(p-1) - 1\right)M \le 0, \label{eqp-qHq} \\
			& (p-q)H(p) = \left(1 - \frac{N}{4}(q-1)\right)M > 0. \label{eqp-qHp}
		\end{align}

		Next we control the number of zeros of $v$ in $r > 0$. By Theorem \ref{thmMorse}, we have $m(u) = 1$. On the one hand, since $u$ is positive, $\int_{\R^N}uv = 0$ yields that the set $\{r > 0: v(r) > 0\}$ is not empty. On the other hand, the set $\{r > 0: v(r) > 0\}$ must be a connected interval. Otherwise, two positive components, together with \eqref{Luv=-u}, would give a two-dimensional negative subspace, in conflict with $m(u) = 1$. Hence, there exists $0 \le r_1 < r_2$ such that
		\begin{align*}
			v(r) > 0 ~~ (r_1 < r < r_2), \quad v(r) \le 0 ~~ (r \in (0,\infty) \backslash (r_1,r_2)).
		\end{align*}
		According to Lemma \ref{asymptoticlemma}, we know $r_2 < \infty$. We claim that $v$ has no zero point in $(0,\infty)\backslash[r_1,r_2]$. Suppose by contradiction that there exists $0 < r^* \notin [r_1,r_2]$ such that $v(r^*) = 0$. It is clear that $r^*$ is a local maximum point and so $v'(r^*) = 0$, $v''(r^*) \le 0$. However, \eqref{Luv=-u} yields
		\begin{align*}
			-v''(r^*) = -u(r^*) < 0,
		\end{align*}
		a contradiction!

		If $r_1 = 0$, we have
		\begin{align*}
			H(q) = \int_{\R^N}\left(u^{q-1} - u(r_2)^{q-1}\right)uvdx > 0,
		\end{align*}
		contradicting \eqref{eqp-qHq}.

		If $r_1 > 0$, we set
		\begin{align*}
			y(r) = \ln \frac{u(0)}{u(r)} > 0, \quad s_i = y(r_i), ~ i = 1,2,
		\end{align*}
		and define
		\begin{align*}
			d\sigma(y) = u(r)v(r)|\mathbb{S}^{N-1}|r^{N-1}dr.
		\end{align*}
		The measure $\sigma$ has signs $(-,+,-)$ on the intervals cut out by $s_1$ and $s_2$, and
		\begin{align*}
			\varphi(t) = \int_0^\infty e^{-ty}d\sigma(y) = u(0)^{-t}H(t+1), \quad t \ge 0.
		\end{align*}
		By Lemma \ref{asymptoticlemma},
		\begin{align*}
			\int_0^\infty \left(1 + y^2\right)d|\sigma|(y) < \infty.
		\end{align*}
		Applying Lemma \ref{atmosttwozero}, the function $\varphi(t)$ has at most two distinct zeros on $[0,\infty)$. However,
		\begin{align*}
			\varphi(0) = H(1) = \int_{\R^N}uv dx = 0,
		\end{align*}
		\eqref{eqp-qHq} and \eqref{eqp-qHp} produce a zero in $[q-1,p-1)$. Take $0 < c < s_1$ and let
		\begin{align*}
			k = -\sigma\left([0,c]\right) > 0, \quad l = \sigma\left((s_1,s_2)\right) > 0.
		\end{align*}
		From
		\begin{align*}
			\varphi(t) \le -ke^{-tc} + le^{-ts_1},
		\end{align*}
		we derive that $\varphi(t) < 0$ for sufficiently large $t$. Combining \eqref{eqp-qHp} we find a third zero point, a contradiction!
	\end{proof}

	Finally, we give the proof of Theorem \ref{thmmain2}.
	
	\begin{proof}[Proof of Theorem \ref{thmmain2}]
		By the Pohozaev identity, \eqref{doublepower} has no solution when $\la \le 0$. Note that conditions in Theorem \ref{thmmain} are satisfied. Hence, for any $\la > 0$, \eqref{doublepower} admits a unique solution $u_\la$ in $H^1_{rad}(\R^N)$. Moreover, as shown in the proof of Theorem \ref{thmmain}, these solutions form a global $C^1$ branch parameterized by $\la$, see also \cite{Song} for a proof. Then, to prove Theorem \ref{thmmain2}, a crucial step is to show that $\int_{\R^N}u_\la^2dx$, viewed as a function of $\la$, is monotonous. Let
		\begin{align*}
			v_\la := \frac{d}{d\la}u_\la.
		\end{align*}
		We have $L_{u_\la}v_\la = - u_\la$. By Lemma \ref{intuvneq0},
		\begin{align} \label{ulavla}
			\frac{d}{d\la}\int_{\R^N}u_\la^2dx = 2\int_{\R^N} u_\la v_\la dx \neq 0.
		\end{align}
		As in the proof of Theorem \ref{thmmain},
		\begin{align*}
			\la^{-\frac{1}{p-1}}u_\la\left(\frac{x}{\sqrt{\la}}\right) \to w_0 \quad \text{ in } H^1(\R^N) \quad \text{ as } \la \to \infty,
		\end{align*}
		where $w_0$ is the unique radial positive solution of
		\begin{align*}
			-\Delta w + w = w^p, \quad w \in H^1(\R^N).
		\end{align*}
		Hence,
		\begin{align*}
			\int_{\R^N}u_\la^2dx \sim \la^{\frac{4-N(p-1)}{2(p-1)}}\int_{\R^N}w_0^2dx \quad \text{ as } \la \to \infty.
		\end{align*}
		If $p <  1 + 4/N$,
		\begin{align*}
			\int_{\R^N}u_\la^2dx \to \infty \text{ as } \la \to \infty.
		\end{align*}
		If $p = 1 + 4/N$,
		\begin{align*}
			\lim_{\la \to \infty}\int_{\R^N}u_\la^2dx = c_0,
		\end{align*}
		where $c_0$ is given in Section \ref{secintroduction}.  Similarly (see
		Theorem 2.5 and Theorem 4.6 in \cite{JZZ}),
		\begin{align*}
			\la^{-\frac{1}{q-1}}u_\la\left(\frac{x}{\sqrt{\la}}\right) \to W_q \quad \text{ in } H^1(\R^N) \quad \text{ as } \la \to 0,
		\end{align*}
		where $W_q$ is the unique radial positive solution of
		\begin{align*}
			-\Delta W + W = W^q, \quad W \in H^1(\R^N).
		\end{align*}
		Hence,
		\begin{align*}
			\int_{\R^N}u_\la^2dx \sim \la^{\frac{4-N(q-1)}{2(q-1)}} \int_{\R^N}W_q^2dx \to 0 \quad \text{ as } \la \to 0.
		\end{align*}
		Recalling \eqref{ulavla}, we further have
		\begin{align*}
			\frac{d}{d\la}\int_{\R^N}u_\la^2dx > 0
		\end{align*}
		and conclude the results presented in Theorem \ref{thmmain2}.
	\end{proof}

\end{document}